\documentclass[]{amsart}

\usepackage{mathtools}
\usepackage{amssymb, graphicx, scalerel}
\usepackage{wasysym}
\usepackage{stackengine}
\usepackage{adjustbox}
\usepackage[all,2cell,cmtip]{xy}
\usepackage{tikz}
\usepackage{hyperref}
\usepackage{MnSymbol}
\usepackage{xfrac}

\makeatletter
\newcommand{\equaldot}{\mathrel{\mathpalette\eqd@t\relax}}
\newcommand{\eqd@t}[2]{%
	\ooalign{%
		$\m@th#1=$\cr
		\hidewidth$\m@th#1\cdot$\hidewidth\cr
	}%
}
\makeatother

\newtheorem{theorem}{Theorem}[section]
\newtheorem{proposition}[theorem]{Proposition}
\newtheorem{lemma}[theorem]{Lemma}
\newtheorem{corollary}[theorem]{Corollary}

\theoremstyle{definition}
\newtheorem{definition}[theorem]{Definition}

\newtheorem{example}[theorem]{Example}
\newtheorem{remark}[theorem]{Remark}

\newcommand{\colim}{\operatornamewithlimits{colim}}
\newcommand{\hocolim}{\operatornamewithlimits{hocolim}}

\newcommand{\C}{\mathcal{C}}
\newcommand{\rotboxtimes}{\scriptsize\rotatebox[origin=c]{45}{$\boxtimes$}}
\newcommand{\rotbox}{\scriptsize\rotatebox[origin=c]{45}{$\square$}}
\newcommand{\specialdots}{\mathbin{\tikz{
			\draw (0,0) circle (0.65pt);
			\fill (0,-.067) circle (0.45pt);
			\fill (0,.067) circle (0.45pt);
			\fill (-.067,0) circle (0.45pt);
			\fill (.067,0) circle (0.45pt);
		}}}
\newcommand{\plusdots}{\mathbin{\tikz{
			\draw (-1pt,0) -- (1pt,0);
			\draw (0,-1pt) -- (0,1pt);
			\fill (0,-.067) circle (0.45pt);
			\fill (0,.067) circle (0.45pt);
			\fill (-.067,0) circle (0.45pt);
			\fill (.067,0) circle (0.45pt);
}}}
\newcommand{\nsfrac}[2]{\text{\raisebox{0.25ex}{$#1$}}\kern-0.1em/\kern-0.15em\text{\raisebox{-0.25ex}{$#2$}}}
\newcommand{\sgn}{\mathrm{sgn}}

\title{Polynomial $2$-monads and delooping}

\author[Florian De Leger]{Florian De Leger}

\address{School of Applied Mathematical and Physical Sciences, National Technical University of Athens}

\email{fdeleger@mail.ntua.gr}

\begin{document}
	
\begin{abstract}
	Using the homotopy theory of polynomial monads developed by Batanin and Berger and extended to the $2$-categorical context by Weber, we prove the cofinality of a particular morphism of polynomial $2$-monads. We apply our result to give a new proof of the delooping of derived mapping spaces of infinitesimal bimodules due to Ducoulombier and Turchin.
\end{abstract}
	
\maketitle

\tableofcontents

\section*{Introduction}

There is a remarkable connection between embedding spaces and mapping spaces of operads. This connection is exhibited by the following weak equivalence of spaces, which holds for $n \geq m+3$:
\begin{equation}\label{equationdeloopingembeddingspace}
	\Omega^{m+1} \mathrm{SOp}^\mathsf{h} (\mathcal{C}_m,\mathcal{C}_n) \xrightarrow{\sim} \overline{\mathrm{Emb}}(\mathbb{R}^m,\mathbb{R}^n).
\end{equation}
In the formula above, $\Omega^{m+1}$ is the $m+1$-fold {loop space functor}, $\mathrm{SOp}^\mathsf{h}(-,-)$ is the {derived mapping space} in the category of symmetric topological operads and $\mathcal{C}_m$ is the little $m$-cubes operad. The space on the right hand side is the homotopy fibre of the canonical inclusion
\[
\mathrm{Emb}(\mathbb{R}^m,\mathbb{R}^n) \to \mathrm{Imm}(\mathbb{R}^m,\mathbb{R}^n)
\]
where $\mathrm{Emb}(-,-)$ (resp. $\mathrm{Imm}(-,-)$) is the space of embeddings (resp. immersions) which agree with the standard inclusion outside a compact. This $m+1$-fold delooping was proved by Boavida de Brito and Weiss \cite{boavida} using configuration categories. It can also be deduced from the results of \cite{aroneturchin,ducoulombier,ducoulombierturchin}. Indeed, Arone and Turchin proved that there is a weak equivalence
\[
\mathrm{IBimod}_{\mathcal{C}_m}^\mathsf{h}(\mathcal{C}_m,\mathcal{C}_n) \xrightarrow{\sim} \overline{\mathrm{Emb}}(\mathbb{R}^m,\mathbb{R}^n)
\]
where $\mathrm{IBimod}_{\mathcal{C}_m}^\mathsf{h}(-,-)$ is the derived mapping space in the category of infinitesimal $\mathcal{C}_m$-bimodules. This weak equivalence is the higher dimensional analogue of Sinha's result \cite{sinha1} concerning the space of long knots. Combining this weak equivalence with the delooping \cite{ducoulombier}
\begin{equation}\label{equationducoulombier}
\Omega \mathrm{SOp}^\mathsf{h}(\C_m,\C_n) \xrightarrow{\sim} \mathrm{Bimod}_{\C_m}^\mathsf{h} (\C_m,\C_n),
\end{equation}
where $\mathrm{Bimod}_{\mathcal{C}_m}^\mathsf{h}(-,-)$ is the derived mapping space in the category of $\mathcal{C}_m$-bimodules, and the $m$-fold delooping \cite{ducoulombierturchin}
\begin{equation}\label{equationducoulombierturchin}
\Omega^m \mathrm{Bimod}_{\C_m}^\mathsf{h} (\C_m,\C_n) \xrightarrow{\sim} \mathrm{IBimod}_{\C_m}^\mathsf{h} (\C_m,\C_n),
\end{equation}
one does indeed recover \eqref{equationdeloopingembeddingspace}.

%Both \eqref{equationtdh} and \eqref{equationsinha} admit the following higher dimensional analogues. Arone and Turchin \cite{aroneturchin} proved that there is a weak equivalence
%\[
%	\mathrm{IBimod}_{\mathcal{C}_m}(\mathcal{C}_m,\mathcal{C}_n) \to \overline{\mathrm{Emb}}(\mathbb{R}^m,\mathbb{R}^n)
%\]

The deloopings \eqref{equationducoulombier} and \eqref{equationducoulombierturchin} are the higher dimensional version of the deloopings due to Dwyer-Hess \cite{dwyerhess} and Turchin \cite{turchin}. In \cite{batanindeleger}, Batanin and the author gave a new proof of the deloopings of Dwyer-Hess and Turchin, using the homotopy theory of polynomial monads. %These deloopings are the non-symmetric analogues, for $m=1$, of the deloopings \eqref{equationducoulombier} and \eqref{equationducoulombierturchin}. % extended some classical results of homotopy theory from small categories to polynomial monads. As an application, we , which are the non-symmetric analogues, for $m=1$, of the deloopings \eqref{equationducoulombier} and \eqref{equationducoulombierturchin}. 
Unfortunately, the framework of polynomial monads does not allow us to tackle the higher dimensional delooping problems. The issue is that structures related to the little $m$-cubes operad can not be encoded by polynomial monads. The solution to overcome this issue is to work with polynomial $2$-monads, which is what we will do in this paper.

The central notion in the homotopy theory of polynomial monad is Batanin's \emph{classifier of internal $S$-algebras inside categorical $T$-algebras}, associated to a cartesian morphism between cartesian monads $f: S \to T$ \cite{batanin}. A prototypical example of an internal algebra classifier is the category $\Delta_+$ of finite ordinals and order-preserving maps, associated to $id: \mathbf{M} \to \mathbf{M}$, where $\mathbf{M}$ is free monoid monad. The addition of ordinals gives a strict monoidal structure on $\Delta_+$. In fact, it is the free strict monoidal category containing a monoid \cite[Section VII.2]{maclane}. This means that it has the following universal property: for any strict monoidal category $A$, monoids in $A$ are equivalent to strict monoidal functors from $\Delta_+$ to $A$. It was proved by Batanin \cite[Theorem 7.3]{batanin} that classifiers can be computed as a bar-construction. For example, the truncated nerve of the category $\Delta_+$ is:
\[
\xymatrix@C=1.5pc{
	\mathbf{M}^3 1 \ar@<1.5ex>[rr]^{\mu_{\mathbf{M}1}} \ar[rr]|{\mathbf{M}\mu_1} \ar@<-1.5ex>[rr]_{\mathbf{M}^2 !} && \mathbf{M}^2 1 \ar@<1.5ex>[rr]^-{\mu_1} \ar@<-1.5ex>[rr]_-{\mathbf{M}!} && \mathbf{M}1, \ar[ll]|-{\mathbf{M} \eta_1}
}
\]
where $\eta$ and $\mu$ are the unit and multiplication of the monad and $1$ is the singleton set.
This formula has been extended by Weber to compute the classifier associated to any cartesian morphism between cartesian $2$-monads. Indeed, Weber \cite[Theorem 5.4.3]{weber} has proved that such classifiers can be computed as the category of corners (see Definition \ref{definitioncategoryofcorners}) of a crossed double category (see Definition \ref{definitioncrosseddoublecategory}).

Batanin's classifiers allowed us to introduce a notion of a \emph{homotopically cofinal} morphism of polynomial monads \cite[Definition 5.6]{batanindeleger}, extending the notion of a \emph{homotopy left cofinal} functor between small categories \cite[Definition 19.6.1]{hirschhorn}. Just like (homotopy) cofinal functors preserve homotopy (limits), homotopically cofinal morphisms of polynomial monads \emph{preserve} derived mapping spaces. One of the main results of \cite{batanindeleger} is the cofinality of two specific morphisms of polynomial monads, from which we recovered the deloopings of Dwyer-Hess and Turchin. Similarly, in this paper, we will construct a morphism of polynomial $2$-monads
\begin{equation}\label{equationhomotopycofinal}
	f:\mathcal{K}_m^\fivedots \to \mathcal{K}_m^{\rotboxtimes}.
\end{equation}
Applying Weber's formula for the computation of internal algebra classifiers, we will prove (see Theorem \ref{theoremcofinality}) that this morphism is homotopically cofinal.

The morphism \eqref{equationhomotopycofinal} actually corresponds to a morphism of (symmetric coloured) topological operads which we will describe now in the case $m=2$. The topological operad $\mathcal{C}_m^{\rotboxtimes}$ is the operad with set of colours $I=\{A,B,C,D,E\}$. The operations of this operad are configurations of non-overlapping little squares inside the unit square, where each little square is coloured with an element of $I$, as in the following picture:
\[
\begin{tikzpicture}[scale=.9]
\draw (-2,-2) rectangle (2,2);
\draw (-1.5,-1.7) rectangle (-.5,-.7) node[midway]{$C$};
\draw (-1.65,-.45) rectangle (-.85,.35) node[midway]{$E$};
\draw (-1.3,.6) rectangle (-.1,1.8) node[midway]{$B$};
\draw (-.6,-.2) rectangle (.05,.45) node[midway]{$A$};
\draw (.6,-1.55) rectangle (1.35,-.8) node[midway]{$E$};
\draw (.35,1.1) rectangle (.9,1.65) node[midway]{$C$};
\draw (.8,-.4) rectangle (1.7,.5) node[midway]{$D$};
\end{tikzpicture}
\]
The unit square is also coloured with an element of $I$, called \emph{target colour}. There is the extra condition that there should be an arrow from the colour of any little cube to the target colour as in the following diagram:
\begin{equation}\label{equationdiamond}
	\xymatrix{
			& D \ar[d] \\
			A \ar[r] \ar[ru] \ar[rd] & E & B \ar[l] \ar[lu] \ar[ld] \\
			& C \ar[u]
	}
\end{equation}
So, if the target colour is $A$ (resp. $B$), then each little square is coloured with $A$ (resp. $B$) and if the target colour is $C$ (resp. $D$), then each little square is coloured with $A$, $B$ or $C$ (resp. $A$, $B$ or $D$). Now the topological operad $\mathcal{C}_m^\fivedots$ is the suboperad of $\mathcal{C}_m^{\rotboxtimes}$ where the configurations of little cubes satisfy extra conditions. For example, if the target colour is $E$, then the little cubes coloured with $A$ should lie in the half plane $x<0$, the ones coloured with $B$ in the half plane $x>0$, the ones coloured with $C$ on the half line $(x=0,y<0)$, the ones coloured with $D$ on the half line $(x=0,y>0)$ and there should be at most one little cube coloured with $E$, centred at the origin, as in the following picture:
\[
\begin{tikzpicture}
\draw (-2,-2) rectangle (2,2);
\draw (-1.6,-1.2) rectangle (-1,-.6) node[midway]{$A$};
\draw (-1.75,.2) rectangle (-.55,1.4) node[midway]{$A$};
\draw (.5,-1.35) rectangle (1.5,-.35) node[midway]{$B$};
\draw (-.35,-1.55) rectangle (.35,-.85) node[midway]{$C$};
\draw (-.25,.5) rectangle (.25,1) node[midway]{$D$};
\draw (-.4,1.1) rectangle (.4,1.9) node[midway]{$D$};
\draw (-.3,-.3) rectangle (.3,.3) node[midway]{$E$};
\end{tikzpicture}
\]
It is interesting to note that, when $\mathcal{C}_m^\fivedots$ is restricted for example to the colours $A$ and $C$, one recovers Voronov's Swiss cheese operad \cite{voronov}. The morphism \eqref{equationhomotopycofinal} is the polynomial $2$-monad version of the canonical inclusion of $\mathcal{C}_m^\fivedots$ into $\mathcal{C}_m^{\rotboxtimes}$.

The paper is organised as follows. In Section \ref{sectionpreliminaries}, we will recall the notions of polynomial $2$-monads, Batanin's classifiers and Weber's formula for their computation. We will also recall some results and definitions of the homotopy theory for polynomial monads from \cite{bataninberger,batanindeleger,bwdquasitame}, which will be used in this paper. %We will introduce a notion of a \emph{homotopically cofinal} morphism of categorical operads, extending \cite[Definition 5.6]{batanindeleger} and therefore also \cite[Definition 19.6.1]{hirschhorn}.
Section \ref{sectioncofinality} is dedicated to proving the cofinality of the morphism \eqref{equationhomotopycofinal}. %, we will construct a morphism of topological operads
Finally, we will explain in Section \ref{sectiondelooping} how our cofinality result can be used to recover Ducoulombier-Turchin's delooping \eqref{equationducoulombierturchin}. The arguments are similar to the ones in \cite{deleger}.% using our cofinality result. We will first deduce a cofinality result from global to local, as we explain now. For an operad $\mathcal{P}$, let $\mathrm{Alg}(\mathcal{P})$ be its category of algebras. Let $\mathcal{C}_2^{\specialdots}$ be the operad $\mathcal{C}_2^\fivedots$ restricted to the set of colours $\{A,B,C,D\}$, and let
%\[
%	\Phi: \mathrm{Alg}(\mathcal{C}_2^{\specialdots}) \to \mathrm{CAT}
%\]
%be the functor sending a quadruple $(A,B,C,D)$ to the category of spaces $E$ equipped with the structure of a $\mathcal{C}_2^\fivedots$-algebra on $(A,B,C,D,E)$.
%
%$\mathcal{C}_m \otimes_{BV} \mathrm{Oct}_m$, $\mathfrak{C}$

\subsection*{Acknowledgement}

I would like to thank George Raptis and Christina Vasilakopoulou for our interesting discussions on this topic. I acknowledge that this work was implemented in the framework of H.F.R.I call “3rd Call for H.F.R.I.’s Research Projects to
Support Faculty Members \& Researchers” (H.F.R.I. Project Number: 23249).

\section{Preliminaries}\label{sectionpreliminaries}

%\subsection{The bicategory of polynomials}
\subsection{Polynomial $2$-monads}

Let $\mathrm{Cat}$ be the category of small categories. Recall that a functor $p: E \to B$ is \emph{exponentiable} when the \emph{base change functor} $p^*: \mathrm{Cat}/B \to \mathrm{Cat}/E$, given by taking pullbacks, has a right adjoint.

\begin{definition}\cite[Section 3.1]{weberpolynomials}
	A \emph{polynomial} in $\mathrm{Cat}$ is a diagram of shape
	\begin{equation}\label{equationpolynomial}
		\xymatrix{
			I & E \ar[l]_s \ar[r]^p & B \ar[r]^t & J
		}
	\end{equation}
	where $p$ is exponentiable. $s$ is the \emph{source map}, $p$ is the \emph{middle map} and $t$ is the \emph{target map}.
\end{definition}

%It was proved \cite[Theorem 3.1.10]{weberpolynomials} that if $\mathcal{E}$ is a category with pullbacks, we can form a bicategory.

\begin{theorem}\cite[Theorem 3.1.10]{weberpolynomials}
	There is a bicategory $\mathrm{Poly}$ whose
	\begin{itemize}
		\item objects are small categories,
		
		\item morphisms $I \to J$ are polynomials,% \eqref{equationpolynomial},
		
		\item composition of morphisms looks as follows:
		\[
		\xymatrix@C=1pc{
			&&&& E \ar[rr] \ar[lllldd] \ar[lldd] \ar@/^.8pc/[rrrr]^{} && Q \ar@{}[rrrdd]|{dpb} \ar@{}[llldd]|{pb} \ar[rr] \ar[d] && B \ar[rrrrdd] \ar[rrdd] \\
			&&&&&& P \ar@{}[d]|{pb} \ar[lld] \ar[rrd] \\
			I && E_1 \ar[rr] \ar[ll] && B_1 \ar[rr] && J && E_2 \ar[ll] \ar[rr] && B_2 \ar[rr] && K \\
			%		\cdot &&&&&&&& \cdot &&&&&&&& \cdot
		}
		\]
		where the diagram on the right is the \emph{distributivity pullback} \cite[Definition 2.2.2]{weberpolynomials},
		
		\item $2$-cells are diagrams
		\[
		\xymatrix{
			I \ar@{=}[d] & E \ar[l]_s \ar[d] \ar[r]^p \ar@{}[rd]|{pb} & B \ar[d] \ar[r]^t & J \ar@{=}[d] \\
			I & E' \ar[l]^{s'} \ar[r]_{p'} & B' \ar[r]_{t'} & J
		}
		\]
	\end{itemize}
\end{theorem}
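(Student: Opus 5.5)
The plan follows Weber's original argument: realise $\mathrm{Poly}$ as a sub-bicategory of a bicategory in which composition and associativity are automatic, namely the bicategory of categories, functors between slice categories, and natural transformations. To a polynomial $(s,p,t)$ from $I$ to $J$ I associate its extension
\[
P_{(s,p,t)} = \Sigma_t \Pi_p \Delta_s : \mathrm{Cat}/I \to \mathrm{Cat}/J .
\]
Here $\Delta_s = s^*$ is pullback, $\Pi_p$ is the right adjoint to $p^*$, which exists because $p$ is exponentiable, and $\Sigma_t$ is postcomposition with $t$. A $2$-cell of polynomials as displayed induces a strong natural transformation between the extensions. The pullback square is exactly what gives the Beck--Chevalley isomorphism $\Delta_{E\to E'}\Delta_{s'}\cong \Delta_s$ together with the mate $\Pi_p \Delta_{E\to E'} \to \Delta_{B\to B'}\Pi_{p'}$. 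Composing these with the counit $\Sigma_{B\to B'}\Delta_{B\to B'}\to 1$ yields the transformation.

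The key step is to show that the composite polynomial displayed in the statement has extension isomorphic to the composite of the extensions. I would proceed in two stages.
\begin{enumerate}
\item First I commute $\Delta_{s_2}\Sigma_{t_1}$, using Beck--Chevalley for the pullback $P$ of $B_1\to J\leftarrow E_2$. This gives $\Delta_{s_2}\Sigma_{t_1}\cong \Sigma\Delta$.
\item Next I commute $\Pi_{p_2}\Sigma_{P\to E_2}$. Here one needs the distributivity law, and this is where the distributivity pullback $(Q\to P,\ Q\to B)$ enters. By \cite[Definition 2.2.2]{weberpolynomials} it is the terminal such configuration, which gives $\Pi_{p_2}\Sigma_u \cong \Sigma_{v}\Pi_{w}\Delta_{\epsilon}$.
\item Finally I use pullback stability of exponentiable maps, which makes $E\to Q$ exponentiable, together with $\Pi\Pi\cong\Pi$ and $\Sigma\Sigma\cong\Sigma$, to rearrange the result into $\Sigma\Pi\Delta$ for the outer span.
\end{enumerate}

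I must also check two closure properties. The new middle map $E\to B$ is a composite of exponentiable maps, since exponentiables in $\mathrm{Cat}$, the Conduché functors, are closed under composition and pullback. And distributivity pullbacks along exponentiable $p_2$ exist, with the relevant object constructed from $\Pi_{p_2}$.

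To obtain the bicategory itself I would argue as follows. The identity polynomial $I\xleftarrow{1}I\xrightarrow{1}I\xrightarrow{1}I$ extends to the identity functor. The assignment from polynomials to extensions is locally fully faithful: $2$-cells of polynomials correspond bijectively to strong (cartesian) natural transformations, by evaluating at terminal objects and applying a Yoneda-type argument. Given this, associators and unitors can be transported from the strict $2$-category of functors, and coherence follows for free.

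Alternatively, one can avoid full faithfulness by defining the associator directly, using uniqueness up to unique isomorphism of pullbacks and distributivity pullbacks. Coherence is then again automatic, since all constituents are determined by universal properties.

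The main obstacle is the distributivity step. This means verifying that distributivity pullbacks exist in $\mathrm{Cat}$ for exponentiable $p$ and that they realise the isomorphism $\Pi\Sigma\cong\Sigma\Pi\Delta$. I would first try to construct $Q$ as $\Pi_{p_2}$ applied to $P\to E_2$, with $Q\times_B E_2\to P$ given by the counit, and then check terminality by adjunction.
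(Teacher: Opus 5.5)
The paper does not prove this statement. It is quoted from Weber \cite{weberpolynomials}, whose setting is a general category with pullbacks, so there is no in-paper argument to compare yours with. On its own merits, your main route is the Gambino--Kock strategy, and it does go through for $\mathrm{Cat}$. You compare composites of extensions $\Sigma_t\Pi_p\Delta_s$ using Beck--Chevalley and the distributivity law, and then transport all the structure along a locally fully faithful assignment $f\mapsto P_f$. This gives more than the statement: you also obtain the homomorphism $P\colon\mathrm{Poly}\to\mathrm{CAT}$ that the paper uses in the next remark. You must transport horizontal composition of $2$-cells as well, not only associators and unitors. That is fine, because polynomial functors preserve pullbacks, so $P\alpha'*P\alpha$ is again cartesian.

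Local full faithfulness is the step everything rests on, and evaluating at the terminal object only gives you $\beta\colon B\to B'$. To get $\epsilon$ and uniqueness, use that $P_f=\Sigma_t\circ\Pi_p\Delta_s$ is a parametric right adjoint:
\begin{itemize}
\item A cartesian $\phi$ with $\phi_1=\beta$ is the same as an isomorphism $\Pi_p\Delta_s\cong\Delta_\beta\Pi_{p'}\Delta_{s'}\cong\Pi_{p''}\Delta_{s'\beta'}$, where $E''=B\times_{B'}E'$.
\item Passing to left adjoints, this is an isomorphism $\Sigma_s\Delta_p\cong\Sigma_{s'\beta'}\Delta_{p''}\colon\mathrm{Cat}/B\to\mathrm{Cat}/I$.
\item These functors are cocontinuous, so a transformation between them is determined on the dense family of simplices $[n]\to B$.
\item Naturality along vertex and edge inclusions then forces the transformation to come from a unique map $E\to E''$ over $B$ and $I$.
\end{itemize}
Some input specific to $\mathrm{Cat}$ of this kind, or a genuine strength, cannot be avoided. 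In $\mathrm{Set}^{\mathbb{Z}/2}$ the involution is a cartesian automorphism of the identity functor, which is the extension of the identity polynomial. Yet it comes from no $2$-cell.

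There are also some smaller points.
\begin{itemize}
\item In your third stage, exponentiability of $E\to Q$ is not enough. The rearrangement needs the Beck--Chevalley isomorphism $\Delta_{Q\to B_1}\Pi_{p_1}\cong\Pi_{E\to Q}\Delta_{E\to E_1}$ for the pullback that defines $E$.
\item In your construction of the transformation attached to a $2$-cell, the pullback hypothesis is what makes the mate $\Delta_\beta\Pi_{p'}\to\Pi_p\Delta_\epsilon$ invertible. The isomorphism $\Delta_\epsilon\Delta_{s'}\cong\Delta_s$ needs no pullback.
\item In the statement's notation it is $B$, not $Q$, that equals $\Pi_{p_2}(P\to E_2)$. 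Then $Q=B\times_{B_2}E_2\to P$ is the counit.
\item Your alternative of building the associator directly is not automatic. The composites $(h\circ g)\circ f$ and $h\circ(g\circ f)$ are not a priori solutions of the same universal problem. You would first need pasting and pullback-stability lemmas for distributivity pullbacks. Even then coherence needs its own argument, because $2$-cells between polynomials are not unique.
\end{itemize}
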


%\subsection{Polynomial $2$-monads}

\begin{definition}
	A \emph{polynomial $2$-monad} is a monad in $\mathrm{Poly}$.
\end{definition}

%\begin{remark}
%	A \emph{polynomial monad} is a polynomial $2$-monad given by a polynomial in $\mathrm{Cat}$ which is discrete, that is, it is actually just a polynomial in $\mathrm{Set}$.
%\end{remark}

\begin{remark}%\label{remarkcartesian}
	There is a functor between bicategories \cite[Section 3.2]{weberpolynomials}
	\[
		P: \mathrm{Poly} \to \mathrm{CAT},
	\]
	where $\mathrm{CAT}$ is the $2$-category of (large) categories, functors and natural transformations. $P$ sends a category $I$ to $\mathrm{Cat}/I$ and a polynomial as in \eqref{equationpolynomial} to the composite
%	\[
%	\xymatrix{
%		\mathrm{Cat}^I \ar[r]^{s^*} & \mathrm{Cat}^E \ar[r]^{p_*} & \mathrm{Cat}^B \ar[r]^{t_!} & \mathrm{Cat}^J,
%	}
%	\]
%	where $s^*$ is the restriction functor and $p_*$ and $t_!$ are the right and left Kan extensions correspondingly.
	\[
	\xymatrix{
		\mathrm{Cat}/I \ar[r]^{s^*} & \mathrm{Cat}/E \ar[r]^{p_*} & \mathrm{Cat}/B \ar[r]^{t_!} & \mathrm{Cat}/J,
	}
	\]
	where $s^*$ is the \emph{base change} functor, $p_*$ is the \emph{dependent product} and $t_!$ is the \emph{dependent sum}, which is given by composition with $t$. In particular, any polynomial $2$-monad induces an actual $2$-monad.% Note that the induced $2$-monad is \emph{cartesian} \cite[Proposition 1.16]{gambinokock}. This means that it preserves pullbacks and the multiplication and unit are \emph{cartesian} natural transformations, that is, their naturality squares are pullbacks.
\end{remark}

\begin{example}\cite[Section 2.3]{weber}\label{examplefreemonoidmoad}
	Let $\mathbf{M}$ be the free strict monoidal category monad. For a category $A$, $\mathbf{M}(A)$ is the category whose objects are finite sequences of objects of $A$ and morphisms are levelwise maps, that is, a morphism $(a_1,\ldots,a_k) \to (b_1,\ldots,b_k)$ is given by morphisms $f_i: a_i \to b_i$, for $i=1,\ldots,k$. This monad is induced by the polynomial
	\[
		\xymatrix{
			1 & \mathbb{N}^* \ar[l] \ar[r] & \mathbb{N} \ar[r] & 1,
		}
	\]
	where $\mathbb{N}$ is the set of non-negative integers, seen as a discrete category. $\mathbb{N}^*$ is the set of pairs $(i,n)$, where $n \in \mathbb{N}$ and $1 \leq i \leq n$. The middle map forget $i$. The multiplication of the monad is of course given by concatenation. Note that, since $\mathbb{N}$ and $\mathbb{N}^*$ are discrete, this polynomial $2$-monad is actually a polynomial monad.
\end{example}

\begin{example}\cite[Section 2.3]{weber} %\label{examplefreesymmon}
	Let $\mathbf{S}$ be the free symmetric strict monoidal category monad. For a category $A$, $\mathbf{S}(A)$ is the category whose objects are finite sequences of objects of $A$ and morphisms $(a_1,\ldots,a_k) \to (b_1,\ldots,b_k)$ are given by a permutation $\rho \in \Sigma_k$ and morphisms $f_i: a_i \to b_{\rho i}$, for $i=1,\ldots,k$. This monad is induced by the polynomial
	\[
	\xymatrix{
		1 & \mathbb{P}^* \ar[l] \ar[r] & \mathbb{P} \ar[r] & 1,
	}
	\]
	where $\mathbb{P}$ is the \emph{permutation category}, whose objects are non-negative integers and the set of morphisms $\mathbb{P}(m,n)$ is the symmetric group $\Sigma_n$ if $m = n$, and is empty if $m \neq n$. $\mathbb{P}^*$ is the category of pairs $(i,n)$, where $n \in \mathbb{P}$ and $1 \leq i \leq n$. A morphism $(i,n) \to (j,n)$ is given by $\sigma \in \Sigma_n$ such that $\sigma(i) = j$. The rest of the description of the monad is as in the previous example.
\end{example}

Note that Weber \cite{weber} also considers the example of the free braided strict monoidal category monad $\mathbf{B}$.

\subsection{Internal algebra classifiers}

\begin{definition}
	An \emph{algebra} over a polynomial $2$-monad $T$ is an algebra for the induced monad on $\mathrm{Cat}/I$.
\end{definition}

\begin{definition}\cite[Definition 7.3]{batanin}
	Let $f: S \to T$ be a morphism of polynomial $2$-monads and $A$ a (strict) $T$-algebra. An \emph{internal $S$-algebra in $A$} is a lax morphism of $S$-algebras $1 \to f^*(A)$, where $1$ is the terminal $S$-algebra and $f^*$ is the restriction functor.
\end{definition}

\begin{definition}
	Let $f: S \to T$ be a morphism of polynomial $2$-monads. The \emph{internal algebra classifier} $T^S$ is the representing object of the functor
	\[
		\mathrm{Int}_S: \mathrm{Alg}_T(\mathrm{Cat}) \to \mathrm{Cat}
	\]
	sending a $T$-algebra $A$ to the category of internal $S$-algebras in $A$.
\end{definition}

Weber proved the existence and gave the formula to compute internal algebra classifiers induced by a morphism of polynomial $2$-monads, extending \cite[Theorem 7.3]{batanin}. Weber's result will be stated in Theorem \ref{theoremformulaclassifier}. Before that, we need to recall his notion of \emph{category of corners} of a \emph{crossed double category}.

%\begin{theorem}
%	
%\end{theorem}

%\subsection{Crossed double categories}

\subsection{Crossed double categories and the $2$-category of corners}

Recall that a (strict) double category is an internal category in $\mathrm{Cat}$. Explicitly, it consists of a truncated simplicial set in $\mathrm{Cat}$
\begin{equation}\label{equationdoublecategory}
	\xymatrix@C=1.5pc{
		X_1 \times_{X_0} X_1 \ar@<1.5ex>[rr]^-{p_2} \ar[rr]|-m \ar@<-1.5ex>[rr]_-{p_1} && X_1 \ar@<1.5ex>[rr]^s \ar@<-1.5ex>[rr]_t && X_0 \ar[ll]|i
	}
\end{equation}
where $m$ satisfy an extra associativity condition. $X_0$ is the category of objects and vertical morphisms, $X_1$ is the category of horizontal morphisms and squares.

%\subsection{Split opfibrations}

Recall that for a functor $f: A \to B$, a morphism $\alpha: a_1 \to a_2$ in $A$ is \emph{$f$-opcartesian} when for all $\gamma: a_1 \to a_3$ and $\beta: f(a_2) \to f(a_3)$ such that $\beta \cdot f(\alpha) = f(\gamma)$, there is a unique $\overline{\beta}: a_2 \to a_3$ such that $f(\overline{\beta}) = \beta$ and $\overline{\beta} \cdot \alpha = \gamma$:
\[
\xymatrix@R=1pc@C=2pc{
	a_1 \ar[dd]_\alpha \ar[rdd]^\gamma &&&& f(a_1) \ar[dd]_{f(\alpha)} \ar[rdd]^{f(\gamma)} \\
	&& \ar@{|->}[r]^f & \\
	a_2 \ar@{.>}[r]_-{\overline{\beta}} & a_3 &&& f(a_2) \ar[r]_\beta & f(a_3)
}
\]
A \emph{cleavage} is given by, for each $a \in A$ and $\beta: f(a) \to b$, an $f$-opcartesian lift of $\beta$, that is an $f$-opcartesian morphism $\alpha$ in $A$ such that $f(\alpha)=\beta$. The functor $f$ is a \emph{split opfibration} if there is a cleavage where the $f$-opcartesian lifts preserve the identity maps and compositions. Finally, a \emph{morphism between split opfibrations} $f_1: A_1 \to B$ and $f_2: A_2 \to B$ is given by a functor $g:A_1 \to A_2$ which preserves the opcartesian lifts and such that $f_2 g = f_1$.

%\subsection{Crossed double categories}

\begin{definition}\cite[Definition 5.1.1]{weber}\label{definitioncrosseddoublecategory}
	A double category \eqref{equationdoublecategory} 
%	\[
%	\xymatrix{
%		X_2 \ar@<1.5ex>[r]^{p_2} \ar[r]|m \ar@<-1.5ex>[r]_{p_1} & X_1 \ar@<1.5ex>[r]^s \ar@<-1.5ex>[r]_t & X_0 \ar[l]|i
%	}
%	\]
	is \emph{crossed} if $t: X_1 \to X_0$ is a split opfibration and $i$ and $m$ are morphisms of split opfibrations (note that $tm=t p_1$ is a split opfibration because split opfibrations are preserved under pullbacks and compositions):
	\[
	\xymatrix{
		X_0 \ar[r]^i \ar@{=}[rd] & X_1 \ar[d]^t & X_2 \ar[l]_m \ar[ld] \\
		& X_0
	}
	\]
\end{definition}

%\subsection{The $2$-category of corners}

\begin{definition}\cite[Definition 5.3.5]{weber}\label{definitioncategoryofcorners}
	Let $X$ be a crossed double category. The \emph{$2$-category of corners} $\mathrm{Cnr}(X)$ is the $2$-category whose
	\begin{itemize}
		\item objects are the objects of $X$,
		
		\item an arrow $x \to y$ is given by a pair $(f,g)$, where $f: x \to a$ is a vertical arrow and $g: a \to y$ is a horizontal arrow,
		
		\item composition of $(f,g): x \to y$ and $(h,k): y \to z$ is given as follows:
		\[
		\xymatrix{
			x \ar[d]^f \\
			a \ar[r]^g \ar@{.>}[d] \ar@{}[rd]|\kappa & y \ar[d]^h \\
			c \ar@{.>}[r] & b \ar[r]^k & z
		}
		\]
		where $\kappa$ is the $t$-opcartesian lift,
		
		\item a $2$-cell $(f,g) \Rightarrow (h,k)$ is given by a pair $(\alpha,\beta)$, where $\alpha$ is a vertical arrow such that $\alpha f = h$ and $\beta$ is a square as in the following diagram:
		\[
		\xymatrix{
			x \ar[d]_f \ar@/_1.3pc/[dd]_h \\
			a \ar[r]^g \ar[d]_\alpha \ar@{}[rd]|\beta & y \ar@{=}[d] \\
			b \ar[r]_k & y
		}
		\]
	\end{itemize}
\end{definition}

\subsection{Formula for internal algebra classifiers}

Let $f: S \to T$ be a morphism of polynomial $2$-monads, given by
\[
	\xymatrix{
		J \ar[d]_\phi & D \ar[l] \ar[d]_\psi \ar[r] \ar@{}[rd]|{pb} & C \ar[r] \ar[d]^\pi & J \ar[d]^\phi \\
		I \ar[r] & E \ar[r] \ar[l] & B \ar[r] & I
	}
\]
Recall that $f$ induces a square of adjunctions:
\[
	\xymatrix{
		\mathrm{Alg}_S \ar@/^/[r]^{f_!} \ar@/^/[d]^{U_S} \ar@{}[r]|\perp \ar@{}[d]|\dashv & \mathrm{Alg}_T \ar@/^/[d]^{U_T} \ar@/^/[l]^{f^*} \ar@{}[d]|\dashv \\
		\mathrm{Cat}/J \ar@/^/[r]^{\phi_!} \ar@/^/[u]^{F_S} \ar@{}[r]|\perp & \mathrm{Cat}/I \ar@/^/[l]^{\phi^*} \ar@/^/[u]^{F_T}
	}
\]
For a $2$-category $\mathbb{C}$, let $\pi_{0*} \mathbb{C}$ be the category whose objects are objects of $\mathbb{C}$ and for $X,Y \in \mathbb{C}$, the set of morphisms from $X$ to $Y$ in $\pi_{0*} \mathbb{C}$ is the set of connected components of the category of morphisms from $X$ to $Y$ in $\mathbb{C}$.

\begin{theorem}\cite{weber}\label{theoremformulaclassifier}
	Let $f: S \to T$ be a morphism of polynomial $2$-monads. Then the bar construction
	\begin{equation}\label{equationcldoublecat}
		\xymatrix{
		F_T \phi_! S^2 1 \ar@<1.5ex>[rr]^{\mu_{\phi_! S 1} \cdot F_T f_{S1}} \ar[rr]|{F_T \phi_! \mu_1} \ar@<-1.5ex>[rr]_{F_T \phi_! S!} && F_T \phi_! S 1 \ar@<1.5ex>[rr]^-{\mu_{\phi_! 1} \cdot F_T f_1} \ar@<-1.5ex>[rr]_-{F_T \phi_! !} && F_T \phi_! 1 \ar[ll]|-{F_T \phi_! \eta_1}
	}
	\end{equation}
	forms a crossed double category $X$ and $T^S$ can be computed as $\pi_{0*}\mathrm{Cnr}(X)$.%, where $X$ is the crossed double category given 
%	forms a crossed double category $\mathbb{D}$. Moreover, .%, where $\mathbb{D}$ is the crossed double category given by
%	If $X$ is a crossed double category, then the codescent object of $X$ exists and $\mathrm{CoDesc}(X) = \pi_{0*}(\mathrm{Crn}(X))$.
\end{theorem}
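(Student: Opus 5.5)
Theorem \ref{theoremformulaclassifier} is cited from Weber, so my plan is to reconstruct its proof in two stages. First I show that the bar construction \eqref{equationcldoublecat} is a crossed double category. Then I identify $T^S$ with the codescent object of this diagram in $\mathrm{Alg}_T$ and compute that codescent object as $\pi_{0*}\mathrm{Cnr}(X)$.

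\textbf{Stage 1: $X$ is a crossed double category.} The simplicial identities for \eqref{equationcldoublecat} come from the usual monad identities for $\mu,\eta$ and from the fact that $f$ is a monad morphism. So $X$ is a truncated simplicial object in $\mathrm{Alg}_T$, and hence in $\mathrm{Cat}$.

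Next I check the Segal condition: the map from $F_T\phi_!S^21$ to the pullback $F_T\phi_!S1\times_{F_T\phi_!1}F_T\phi_!S1$ should be an isomorphism. This uses that polynomial $2$-monads are cartesian, so $S$, $T$ and $f$ are cartesian. Consequently the naturality squares of $\mu$, $\eta$ and $f$ are pullbacks, and applying $F_T\phi_!$ preserves the relevant pullbacks because $T$ is a polynomial functor and preserves connected limits.

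For the crossed condition, I use the explicit polynomial description. An object of $F_T\phi_!S^k1$ is a $T$-operation whose inputs are decorated by $k$-fold composable $S$-operations. The target map $t=\mu\cdot F_Tf$ forgets the decoration by substituting. I would show $t$ is a split opfibration as follows. A morphism in the base $F_T\phi_!1=T\phi_!1$ is a morphism of $T$-operations, essentially a morphism in $B$ over $I$. It lifts canonically by transporting the $S$-decorations along the induced maps in $E$. Exponentiability of $p$, and the fact that $C\to B$ is a pullback of $D\to E$, make this transport functorial and strictly split. With this cleavage, $i$ and $m$ visibly preserve the chosen lifts.

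\textbf{Stage 2: $T^S$ as a codescent object.} The universal property of $T^S$ says that strict $T$-maps $T^S\to A$ correspond to lax $S$-morphisms $1\to f^*A$. I unfold the latter into data: an object of $F_T\phi_!1\to A$, which by freeness is a map $\phi_!1\to U_TA$, together with a $2$-cell over $F_T\phi_!S1$ satisfying unit and cocycle conditions over $F_T\phi_!S^21$. This is exactly a lax codescent cocone on $X$. Hence $T^S$ is the (lax) codescent object of $X$ in $\mathrm{Alg}_T$, and the forgetful functor creates it, so it suffices to compute it in $\mathrm{Cat}$.

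\textbf{Main step: computing the codescent object.} Here I must show that for a crossed double category $X$, the codescent object is $\pi_{0*}\mathrm{Cnr}(X)$. The construction is as follows:
\begin{itemize}
\item the composite of corners is associative because the opcartesian lifts are split;
\item the inclusion $X_0\to\mathrm{Cnr}(X)$ via vertical arrows, together with horizontal arrows viewed as corners $(1,g)$, gives the universal cocone;
\item squares of $X_1$ become $2$-cells, and these are collapsed by $\pi_{0*}$, which yields the required equations of the cocone.
\end{itemize}

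The universal property is the hardest step. Given a cocone into a category $A$, the value on a corner $(f,g)$ is forced to be the composite of the images of $f$ and $g$. Well-definedness on $\pi_0$ requires that $2$-cells $(\alpha,\beta)$ are sent to equalities, which follows from the cocycle condition on squares. Functoriality on composites uses the opcartesian square $\kappa$, whose image must commute by naturality of the cocone $2$-cell. I expect the bookkeeping that shows every zigzag of squares is generated by opcartesian lifts and $2$-cells to be the main obstacle. I would handle it by factoring an arbitrary square as an opcartesian square followed by a square with identity target, which is exactly a $2$-cell in $\mathrm{Cnr}(X)$.
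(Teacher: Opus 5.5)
The paper gives no proof of this statement; it simply cites Weber. Your overall architecture is the right one: internal $S$-algebras as lax codescent cocones on $f_!$ of the bar resolution of $1$, this diagram being a crossed internal category, and lax codescent objects of crossed internal categories being $\pi_{0*}\mathrm{Cnr}$. However, two steps are wrong or missing.

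\textbf{Source and target are swapped in Stage 1.} In \eqref{equationcldoublecat} the upper arrow $\mu_{\phi_!1}\cdot F_Tf_1$ is the source $s$, and the lower arrow $F_T\phi_!!$ is the target $t$. It is $F_T\phi_!!$ that must be a split opfibration; compare the remark after the $\mathbf{M}\to\mathbf{S}$ example, where $\mathbf{S}!$ is a discrete opfibration.

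The map you chose is not an opfibration in general. The functor $\mu_1\cdot\mathbf{S}f_1\colon\mathbf{S}(\mathbb{N})\to\mathbb{P}$ sends $(n_1,\ldots,n_k)$ to $N=\sum n_i$ and sends every morphism to a block permutation. So, for example, the transposition in $\Sigma_2$ has no lift at the object $(2)$.

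This orientation is not a convention you can flip. The cocone $2$-cell coming from a lax morphism $1\to f^*A$ runs from the value on the substituted operation to the value on the outer one; for a monoid this is $a^{\otimes N}\to a^{\otimes k}$. So horizontal arrows go $N\to k$, as in $\Delta_+$, and corner composition needs opcartesian lifts along exactly this target.

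Your recipe ``transport the $S$-decorations along the induced maps in $E$'' is in fact the cleavage of $F_T\phi_!!$. Note that it needs a morphism $b\to b'$ in $B$ to induce a map of fibres $E_{b'}\to E_b$, i.e.\ $p$ must be a discrete fibration, as for $\mathbb{P}^*\to\mathbb{P}$ and $\mathtt{CG}^*\to\mathtt{CG}$. Exponentiability alone does not give this.

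\textbf{The creation claim is unproved.} The sentence ``the forgetful functor creates it, so it suffices to compute it in $\mathrm{Cat}$'' is asserted without argument, and it is the crux of the theorem. $U_T$ does not create lax codescent objects in general. The usual route is to show that $T$ (and hence $T^2$) preserves the lax codescent object of $U_TX$. Since $T=t_!p_*s^*$ contains the right adjoint $p_*$, there is no a priori reason for this.

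Without it, Stage 2 only identifies $T^S$ with the lax codescent object of $X$ in $\mathrm{Alg}_T$. Your Main Step, which computes the lax codescent object in $\mathrm{Cat}$, then says nothing about $T^S$. You need a separate argument that $T$ preserves lax codescent objects of crossed internal categories in $\mathrm{Cat}/I$. One possible route is to check, from the explicit corner description, that $\pi_{0*}\mathrm{Cnr}$ commutes with the finite, $B$-twisted products out of which $p_*$ is built when $p$ is a discrete fibration with finite fibres. This is where polynomiality of $T$ is genuinely used.

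Once $t$ is the correct leg, the rest of your Main Step is sound: composites via the opcartesian square $\kappa$, and factoring an arbitrary square as an opcartesian one followed by one with identity target. One small correction: $2$-cells $(\alpha,\beta)$ are sent to equalities by naturality of the cocone $2$-cell at the square $\beta$, not by the cocycle condition. The cocycle condition is what handles horizontal composites.
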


%\subsection{Bar construction}
%
%
%
%
%The classifier induced by $f$ can be computed as
%\begin{equation}\label{equationcldoublecat}
%	\xymatrix{
%		F_T \phi_! S^2 (1) \ar@<1.5ex>[r]^p \ar[r]|q \ar@<-1.5ex>[r]_r & F_T \phi_! S (1) \ar@<1.5ex>[r]^-d \ar@<-1.5ex>[r]_-c & F_T \phi_! (1) \ar[l]|-e
%	}
%\end{equation}
%
%\subsection{Example of computation}

\begin{example}\cite[Section 6.2]{weber}
	Let $f: \mathbf{M} \to \mathbf{S}$ be the obvious morphism given by inclusion functors. Then \eqref{equationcldoublecat} becomes
	\begin{equation}\label{equationexample}
		\xymatrix{
			\mathbf{S} \mathbf{M} (\mathbb{N}) \ar@<1.5ex>[rr]^{\mu_{\mathbb{N}} \cdot \mathbf{S}f_{\mathbb{N}}} \ar[rr]|{\mathbf{S} \mu_1} \ar@<-1.5ex>[rr]_{\mathbf{S} \mathbf{M} !} && \mathbf{S} (\mathbb{N}) \ar@<1.5ex>[rr]^-{\mu_1 \cdot \mathbf{S}f_1} \ar@<-1.5ex>[rr]_-{\mathbf{S} !} && \mathbb{P}. \ar[ll]|-{\mathbf{S} \eta_1}
		}
	\end{equation}
	The objects of this double category are non-negative integers. The category of objects and vertical morphisms is $\mathbb{P}$. The category of objects and horizontal morphisms is $\Delta_+$. The squares are given by
	\begin{equation}\label{squaredoublesm}
		\xymatrix{
			m \ar[r]^f \ar[d]_\sigma & n \ar[d]^\rho \\
			m \ar[r]_g & n
		}
	\end{equation}
	where $f$ and $g$ are order-preserving maps and $\sigma$ and $\rho$ are permutations, as illustrated below \cite[Section 4]{daystreet}:
	\[
		\begin{tikzpicture}
			\draw (-.5,-.75) node{$f$};
			\draw (-.5,-2.25) node{$\rho$};
			\draw[fill] (0,0) circle (1pt);
			\draw[fill] (.7,0) circle (1pt);
			\draw[fill] (1.4,0) circle (1pt);
			\draw[fill] (2.1,0) circle (1pt);
			\draw[fill] (2.8,0) circle (1pt);
			\draw[fill] (3.5,0) circle (1pt);
			\draw[fill] (.7,-1.5) circle (1pt);
			\draw[fill] (1.4,-1.5) circle (1pt);
			\draw[fill] (2.1,-1.5) circle (1pt);
			\draw[fill] (2.8,-1.5) circle (1pt);
			\draw[fill] (.7,-3) circle (1pt);
			\draw[fill] (1.4,-3) circle (1pt);
			\draw[fill] (2.1,-3) circle (1pt);
			\draw[fill] (2.8,-3) circle (1pt);
			
			\draw (0,0) -- (.7,-1.5) -- (2.1,-3);
			\draw (.7,0) -- (.7,-1.5);
			\draw (1.4,0) -- (.7,-1.5);
			\draw (1.4,-1.5) -- (.7,-3);
			\draw (2.1,-1.5) -- (2.8,-3);
			\draw (2.1,0) -- (2.1,-1.5);
			\draw (2.8,0) -- (2.8,-1.5) -- (1.4,-3);
			\draw (3.5,0) -- (2.8,-1.5);
			
			\draw (4.5,-1.5) node{$=$};
			
			\begin{scope}[shift={(5.9,0)}]
			\draw[fill] (0,0) circle (1pt);
			\draw[fill] (.7,0) circle (1pt);
			\draw[fill] (1.4,0) circle (1pt);
			\draw[fill] (2.1,0) circle (1pt);
			\draw[fill] (2.8,0) circle (1pt);
			\draw[fill] (3.5,0) circle (1pt);
			\draw[fill] (0,-1.5) circle (1pt);
			\draw[fill] (.7,-1.5) circle (1pt);
			\draw[fill] (1.4,-1.5) circle (1pt);
			\draw[fill] (2.1,-1.5) circle (1pt);
			\draw[fill] (2.8,-1.5) circle (1pt);
			\draw[fill] (3.5,-1.5) circle (1pt);
			\draw[fill] (.7,-3) circle (1pt);
			\draw[fill] (1.4,-3) circle (1pt);
			\draw[fill] (2.1,-3) circle (1pt);
			\draw[fill] (2.8,-3) circle (1pt);
			
			\draw (0,0) -- (1.4,-1.5) -- (2.1,-3);
			\draw (.7,0) -- (2.1,-1.5) -- (2.1,-3);
			\draw (1.4,0) -- (2.8,-1.5) -- (2.1,-3);
			\draw (2.1,0) -- (3.5,-1.5) -- (2.8,-3);
			\draw (2.8,0) -- (0,-1.5) -- (1.4,-3);
			\draw (3.5,0) -- (.7,-1.5) -- (1.4,-3);
			\draw (4.1,-.75) node{$\sigma$};
			\draw (4.1,-2.25) node{$g$};
			\end{scope}
		\end{tikzpicture}
	\]
	The category of corners is $\Sigma \Delta_+$, the free symmetric strict monoidal category containing a monoid \cite{daystreet,weber}.
\end{example}

\begin{remark}
	Note that in the previous example, the functor $\mathbf{S}!$ is even a discrete opfibration. So the double category \eqref{equationexample} is \emph{codomain-discrete} \cite[Definition 2.17]{stepan}. %\v{S}t\v{e}p\'an proved that there is a correspondence between codomain-discrete double categories and categories equipped with a strict factorization system \cite[Theorem 3.8]{stepan}.
	It was proved that there is a correspondence between codomain-discrete double categories and categories equipped with a strict factorization system \cite[Theorem 3.8]{stepan}. The factorization system in $\Sigma \Delta_+$ is given by the order-preserving maps and the permutations.
\end{remark}

\subsection{Homotopy theory for polynomial monads}

%\begin{definition}
%	Let $f: S \to T$ be a morphism of categorical operads and $A$ an $S$-algebra. $f$ is \emph{homotopically left cofinal} at $A$ if the unit of the derived adjunction is a weak equivalence at $A$.
%\end{definition}

In this subsection we will recall some results and notions of homotopy theory for polynomial monads from \cite{bataninberger,batanindeleger,bwdquasitame}. We will assume that these results still hold for polynomial $2$-monads. We plan to give the proofs in \cite{delegerhomotopy}.

With sufficient assumption, the category of algebras over a polynomial monad in a monoidal category admits a \emph{transferred model structure} \cite[Theorem 2.11]{bataninberger}. Also recall that for a morphism of polynomial monads $f: S \to T$ and a $S$-algebra $X$ in a monoidal category $\mathcal{E}$, there is a functor $\widetilde{X}: T^S \to \mathcal{E}$ representing $X$ \cite[Section 6.16]{bataninberger}.

\begin{theorem}\cite[Theorem 8.2]{bataninberger}\label{theoremformulaleftderived}
	Let $\mathcal{E}$ be a monoidal model category with a ``good'' realisation functor for simplicial objects. Let $f: S \to T$ be a morphism of polynomial monads and $X$ a $S$-algebra in $\mathcal{E}$ whose underlying collection is pointwise cofibrant. Then
	\[
	\mathbb{L}f_!(X) \simeq \hocolim_{T^S} \widetilde{X},
	\]
	where $\mathbb{L}f_!$ is the left derived Quillen functor and $\widetilde{X}: T^S \to \mathcal{E}$ represents $X$.
\end{theorem}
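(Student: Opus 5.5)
The plan is to compute $\mathbb{L}f_!$ on a specific cofibrant replacement of $X$, namely the simplicial bar resolution. Then I would identify the resulting simplicial object, levelwise, with the Bousfield--Kan simplicial replacement of $\widetilde{X}$ over the nerve of $T^S$. This is the strategy of \cite{bataninberger}. The input that makes it work is the bar-construction description of the classifier (the monad case of Theorem \ref{theoremformulaclassifier}, i.e.\ \cite[Theorem 7.3]{batanin}): the nerve of $T^S$ is the simplicial set $n \mapsto F_T \phi_! S^n 1$, with face maps built from $\mu$, $f$ and $!$.

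First, form the simplicial $S$-algebra $B_\bullet(X) = F_S (U_S F_S)^{\bullet} U_S X$, augmented to $X$. Using that the underlying collection of $X$ is pointwise cofibrant and that $\mathcal{E}$ has a good realisation functor, I would check the following.
\begin{itemize}
\item $B_\bullet(X)$ is Reedy cofibrant in the transferred model structure. The latching maps are images under $F_S$ of coproduct inclusions of summands built from tensor products of copies of $X$.
\item Its realisation $|B_\bullet(X)|$ is a cofibrant replacement of $X$. The augmented object has an extra degeneracy after applying $U_S$.
\end{itemize}
Hence $\mathbb{L}f_!(X) \simeq f_!|B_\bullet(X)| \cong |f_! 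B_\bullet(X)|$, since $f_!$ is a left adjoint and commutes with realisation. Moreover $f_! F_S = F_T \phi_!$, so levelwise $f_! B_n(X) = F_T \phi_! S^n X$, where $S^n$ now denotes the induced monad on collections in $\mathcal{E}$.

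Second, I would analyse $F_T \phi_! S^n X$ using polynomiality. For a polynomial monad, the value at a colour $i$ decomposes as a coproduct indexed by the elements of $(F_T\phi_! S^n 1)_i$, which are the $n$-simplices of the nerve of $T^S$ over $i$. The summand indexed by a chain $c_0 \to \cdots \to c_n$ is a tensor product of the values of $X$ at the leaves of the underlying tree, and by the definition of $\widetilde{X}$ \cite[Section 6.16]{bataninberger} this is exactly $\widetilde{X}(c_0)$. So $f_!B_n(X) \cong \coprod_{c_0\to\cdots\to c_n} \widetilde{X}(c_0)$, which is the $n$-th level of the simplicial replacement of $\widetilde{X}$.

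Third, I would check that these isomorphisms are compatible with faces and degeneracies. Faces induced by $\mu$ compose adjacent arrows of the chain. The last face, induced by the $S$-algebra structure of $X$, corresponds to applying $\widetilde{X}$ to the first arrow. Degeneracies come from $\eta$. Realising then gives $\hocolim_{T^S}\widetilde{X}$ via the Bousfield--Kan formula, again using that the realisation is good and the simplicial replacement is Reedy cofibrant.

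I expect the main obstacle to be the homotopical bookkeeping in the first step. One must show that the bar resolution is Reedy cofibrant and that its realisation correctly computes both the cofibrant replacement in $\mathrm{Alg}_S$ and the homotopy colimit in $\mathcal{E}$. This is where the hypotheses on $\mathcal{E}$ and the pointwise cofibrancy of $X$ are used. To handle it, I would first prove that $F_S$ sends pointwise cofibrant collections to algebras whose underlying collections are cofibrant. Then I would compute the latching objects of $B_\bullet(X)$ explicitly as sub-coproducts indexed by degenerate simplices of the nerve, so that each latching map is a summand inclusion.
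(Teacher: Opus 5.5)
The paper gives no proof of this statement: it quotes it from \cite[Theorem 8.2]{bataninberger} and only assumes it extends to polynomial $2$-monads. Your outline is the bar-construction argument behind that cited result: the bar resolution $F_S S^\bullet X$, the identity $f_!F_S\cong F_T\phi_!$, the colourwise identification of $F_T\phi_!S^\bullet X$ with the simplicial replacement $\coprod_{c_0\to\cdots\to c_n}\widetilde{X}(c_0)$ indexed by the nerve $F_T\phi_!S^\bullet 1$ of $T^S$, and then realisation. It is correct in outline, and the ``good'' realisation hypotheses enter exactly where you say. One face you did not mention, $\mu_T\cdot F_Tf$, does not compose arrows: it drops the terminal vertex $c_n$ and acts as the identity on $\widetilde{X}(c_0)$, so the bar construction is the nerve with face indices reversed, which is harmless.
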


%\begin{corollary}\cite[Corollary 8.4]{bataninberger}
%	For any morphism of polynomial monads $f: S \to T$, we have
%	\[
%		N(T^S) \simeq f_!(N(T^T)) \simeq \mathbb{L}f_!(1),
%	\]
%	where $N$ is the simplicial nerve. 
%%	where $1$ is the terminal $S$-algebra. 
%	In particular, $N(T^S)$ is cofibrant.
%\end{corollary}

\begin{definition}\cite[Definition 5.6]{batanindeleger}
	A morphism of polynomial $2$-monads $f: S \to T$ is \emph{homotopically cofinal} if $N(T^S)$ is contractible.% be a morphism of categorical operads and $A$ an $S$-algebra. $f$ is \emph{homotopically left cofinal} at $A$ if the unit of the derived adjunction is a weak equivalence at $A$.
\end{definition}

%\subsection{Tameness}

For a monad $T$ on a category $\mathbb{C}$, let $T+1$ be the monad on $\mathbb{C} \times \mathbb{C}$ given by $T \times id$, with evident multiplication and unit. If $T$ is a polynomial monad, so is $T+1$ \cite[Section 6.18]{bataninberger}, and there is a morphism of polynomial monads $T+1 \to T$ such that the restriction functor is given by the diagonal composed with the forgetful functor. Also recall that the \emph{fundamental groupoid} of a category is the groupoid obtained from this category by freely inverting all the morphisms.

\begin{definition}\cite[Definition 4.19]{bwdquasitame} %\cite[Definition 6.19]{bataninberger}
	A polynomial monad $T$ is \emph{quasi-tame} if the fundamental groupoid of the classifier $T^{T+1}$ is equivalent to a discrete groupoid.
\end{definition}

\begin{definition}\cite[Definition 1.1]{bataninberger}
	A morphism $f: X \to Y$ in a model category is an $h$-cofibration if pushouts along this morphism preserve weak equivalence. Explicitly, for all diagrams
	\[
		\xymatrix{
			X \ar[r] \ar[d]_f \ar@{}[rd]|{po} & A \ar[r]^w \ar[d] \ar@{}[rd]|{po} & B \ar[d] \\
			Y \ar[r] & A' \ar[r]_{w'} & B'
		}
	\]
	where both squares are pushouts, if $w$ is a weak equivalence, then so is $w'$.
\end{definition}

\begin{definition}\cite[Definition 1.11]{bataninberger}
	A monoidal model category is \emph{strongly $h$-monoidal} if for each cofibration (resp. weak equivalence) $f: X \to Y$ and each object $Z$, $f \otimes id: X \otimes Z \to Y \otimes Z$ is an $h$-cofibration (resp. weak equivalence).% It is \emph{strongly $h$-monoidal} if moreover the class of weak equivalences is closed under tensor product.
%	A monoidal model category is \emph{$h$-monoidal} if for each (trivial) cofibration $f: X \to Y$ and each object $Z$, $f \otimes id: X \otimes Z \to Y \otimes Z$ is a (trivial) $h$-cofibration. It is \emph{strongly $h$-monoidal} if moreover the class of weak equivalences is closed under tensor product.
\end{definition}

For a given monoidal model category, let $\mathcal{K}$ be the monoidal saturation of the class of cofibrations, that is the smallest class containing all cofibrations and which is closed under pushouts, transfinite compositions, retracts and tensoring with arbitrary objects. A monoidal model category is \emph{compactly generated} if any object is $\mathcal{K}$-small and weak equivalences are closed under filtered colimits along morphisms in $\mathcal{K}$ \cite[Definition 1.2]{bergermoerdijkhomotopy}. Also recall that a model category is \emph{left proper} if weak equivalences are preserved under pushouts along cofibrations.%, where $\mathcal{K}$ is the monoidal saturation of the class of cofibrations. 
%Recall that a class of morphisms in a monoidal category is \emph{monoidally saturated} if it is 

\begin{theorem}\cite[Theorem 4.25]{bwdquasitame}\label{theoremleftproper}
	Let $T$ be a quasi-tame polynomial monad and $\mathcal{E}$ a compactly generated strongly $h$-monoidal model category. Then the category of $T$-algebras in $\mathcal{E}$ is left proper.
\end{theorem}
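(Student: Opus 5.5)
The plan is to follow the strategy of \cite{bwdquasitame} (and of \cite{bataninberger} for tame monads). The idea is to show that pushouts of $T$-algebras along cofibrations can be written as filtered colimits of pushouts in $\mathcal{E}$. These pushouts are formed along maps that are $h$-cofibrations, and left properness is then inherited from the good properties of $\mathcal{E}$.

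\textbf{Step 1.} Reduce to the case of a generating cofibration. Consider a pushout $X \to Y$ of $T$-algebras along a free map $F_T(K) \to F_T(L)$, where $K \to L$ is a cofibration in $\mathcal{E}^I$. Every cofibration of $T$-algebras is a retract of a transfinite composite of such pushouts. Weak equivalences are closed under retracts, and, by compact generation, under filtered colimits along maps in $\mathcal{K}$. So it suffices to treat a single such pushout, provided we also keep track of the fact that the relevant maps lie in $\mathcal{K}$.

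\textbf{Step 2.} Use the filtration of \cite[Section 7]{bataninberger} to write the underlying object of the pushout $X \cup_{F_T K} F_T L$. Its components are a sequential colimit of pushouts in $\mathcal{E}$. At stage $n$ one attaches a term of the form
\[
\colim_{T^{T+1}_n} (\text{tensor powers of } X, K, L),
\]
namely the value of the classifier $T^{T+1}$ on the part with exactly $n$ ``new'' inputs, glued along its latching object.

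Quasi-tameness enters here. Because the fundamental groupoid of $T^{T+1}$ is equivalent to a discrete groupoid, each connected component of the relevant subcategories has a homotopically terminal structure up to the action of finite automorphism groups. Consequently these colimits are, up to weak equivalence, coproducts of quotients $(X^{\otimes a} \otimes (L^{\otimes b}/\text{latching}))_{\Gamma}$. The latching maps are then built from tensor products of cofibrations with arbitrary objects, so they are $h$-cofibrations by strong $h$-monoidality. The colimit diagrams are also invariant under replacing $X$ by a weakly equivalent algebra, again by strong $h$-monoidality applied to $X^{\otimes a}\otimes(-)$.

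\textbf{Step 3.} Let $w: X \to X'$ be a weak equivalence, and compare the two filtrations stage by stage. At each stage the new term is a pushout along an $h$-cofibration of objects related by weak equivalences, so the map between stages is a weak equivalence. Passing to the sequential colimit along maps in $\mathcal{K}$ gives the result by compact generation.

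The main obstacle is Step 2. One must show that, under mere quasi-tameness rather than tameness, the colimit over the classifier $T^{T+1}$ is still homotopy invariant in $X$. This requires checking that the colimit over each component can be replaced by a homotopy colimit, that is, that the discreteness of the fundamental groupoid suffices to get cofinal discrete subcategories up to automorphisms. I would try to prove this first by reducing the colimit over a component with trivial-up-to-equivalence fundamental groupoid to the colimit over a chosen object, using the theory of \cite{bwdquasitame}.
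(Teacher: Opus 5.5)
\textbf{There is a genuine gap in Step 2, at the one point where the hypothesis is used.}

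Your Steps 1 and 3 follow the standard Batanin--Berger skeleton: reduce to cell attachments, filter $X\cup_{F_TK}F_TL$ by the number of new inputs, and compare stages using $h$-cofibrations and compact generation. As far as they go, they are fine. The problem is that everything specific to quasi-tameness sits in Step 2, and there the key claim is asserted rather than proved.

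Once the filtration is written down, the attaching map at stage $n$ is a coproduct, over the components $C$ of $T^{T+1}$ with $n$ new inputs, of maps $\bigl(\colim_C \widetilde{X}\bigr)\otimes (Q_n\to L^{\otimes n})$. Here $Q_n\to L^{\otimes n}$ is the latching map, and $\widetilde X\colon C\to\mathcal E$ sends an operation with $a$ inputs from $X$ to $X^{\otimes a}$ and sends arrows to structure maps of $X$. So the theorem stands or falls with one statement: $X\mapsto\colim_C\widetilde X$ takes weak equivalences of $T$-algebras to weak equivalences. This cannot be routed around, since for $K=\emptyset$ the pushout is the semi-free coproduct $X\vee F_T(L)$, which is built from exactly these objects.

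Your justification, strong $h$-monoidality, only gives an \emph{objectwise} weak equivalence $\widetilde X\to\widetilde{X'}$. Strict colimits do not preserve objectwise weak equivalences. In the tame case this is bypassed because each $C$ has a terminal object, so the colimit is just an evaluation $X^{\otimes a}$. The formula $(X^{\otimes a}\otimes\cdots)_\Gamma$ you write down is precisely that tame-case formula.

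Discreteness of the fundamental groupoid does not replace the terminal object. It says the components are simply connected. It does not give contractibility, a homotopy terminal object, or a cofinal discrete subcategory, up to automorphisms or otherwise.

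What it does give is this: a diagram on $C$ that inverts \emph{all} arrows factors through $\Pi_1(C)\simeq *$. Such a diagram is isomorphic to a constant one, and its colimit is its value at any object. This is presumably what you mean by reducing to a chosen object. It handles arrow-inverting bookkeeping, such as identifying the new inputs across $C$, so no monodromy quotient $(\cdots)_\Gamma$ arises from there.

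But $\widetilde X$ is not arrow-inverting. Its arrows go to multiplications $X^{\otimes r}\to X$ and, when $T$ has nullary operations, to unit maps into $X$. The paper's own example shows this is a real issue. For $\mathcal K_m$, the components of $T^{T+1}$ have an initial object but in general no terminal object (see the remark after Lemma~\ref{lemmaquasitame}). So $\colim_C\widetilde X$ genuinely glues different tensor powers of $X$ along structure maps. Even contractibility of $C$ would not make such a strict colimit homotopy invariant, unless you show it is a homotopy colimit, for instance that it is assembled by pushouts along $h$-cofibrations.

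Supplying that argument is the actual content of \cite[Theorem 4.25]{bwdquasitame}, which the paper only cites and does not prove. Your final paragraph postpones it, so the proposal as written does not prove the theorem.
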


\section{Cofinality result}\label{sectioncofinality}

%\subsection{Boardman-Vogt tensor product of an operad and a poset}
%
%\begin{definition}
%	Let $\mathcal{C}$ be a poset and $c \in \mathcal{C}$. A \emph{$k$-bouquet} in $\mathcal{C}$ is a $k+1$-tuple $c_1,\ldots,c_k,c$ of elements of $\mathcal{C}$ such that $c_i \leq c$ for all $i=1,\ldots,k$. $c$ is the \emph{target colour} of the $k$-bouquet.
%\end{definition}
%
%\begin{proposition}
%	Let $\mathcal{P}$ be an operad and $\mathcal{C}$ a poset, seen as an operad with only unary operations. The $k$-ary operations of $\mathcal{P} \otimes_{BV} \mathcal{C}$ are the $k$-ary operations of $\mathcal{P}$ together with $k$-bouquets in $\mathcal{C}$.
%\end{proposition}
%
%\begin{proof}
%	This can be easily checked using the explicit description of the Boardman-Vogt tensor product in terms of generators and relations.
%\end{proof}

%\subsection{The operads ${\mathcal{C}_m^{\rotboxtimes}}$}

%$\mathcal{C}_m^{\rotboxtimes}$

\subsection{The operads $\mathcal{C}_m^{\rotboxtimes}$ and $\mathcal{C}_m^\fivedots$}

\begin{definition}
	For $m \geq 0$, let $\mathfrak{D}^m$ be the poset
	\begin{equation}\label{equationposet}
		(\{-1,1\} \times \{1,\ldots,m\}) \cup \{(0,m+1)\}
	\end{equation}
	where the order is defined by $(\epsilon,l) < (\epsilon',l')$ whenever $l < l'$.% For example, $\mathrm{Oct}(2)$ looks as follows:
%	\[
%	\xymatrix{
%		& E \\
%		C \ar[ru] && D \ar[lu] \\
%		A \ar[u] \ar[rru] && B \ar[u] \ar[llu]
%	}
%	\]
\end{definition}

\begin{remark}\label{remarkoct}
	There is a canonical correspondence between the objects of $\mathfrak{D}^m$ and $2m+1$ points in $\mathbb{R}^m$. These $2m+1$ points are the centres of each of the $2m$ faces of the unit $m$-cube plus the centre of the cube. From this observation, it is easy to see that the classifying space of $\mathfrak{D}^m$ is given by the convex hull of these $2m+1$ points in $\mathbb{R}^m$. For $m=2$, we get a square as pictured in the diagram \eqref{equationdiamond} of the introduction, where $A=(-1,1)$, $B=(1,1)$, $C=(-1,2)$, $D=(1,2)$ and $E=(0,3)$. For $m=3$, we get an octahedron. The notation $\mathrm{Oct}(m)$ was in fact used for the category $\mathfrak{D}^m$ in \cite[Definition 3.1]{bataninglobular}.
\end{remark}

Recall that a \emph{coloured operad} $\mathcal{P}$ in a symmetric monoidal category $(\mathcal{V},\otimes,v)$ is given by a set of \emph{colours} $I$ and an object $\mathcal{P}(i_1,\ldots,i_k;i) \in \mathcal{V}$ of \emph{$k$-ary operations} together with
\begin{itemize}
	%	\item for $k \geq 0$ and $(c_1,\ldots,c_k,c) \in C^{k+1}$, 
	\item multiplication maps
	\begin{equation*}%\label{compositionmap}
		\mathcal{P}(i_1,\ldots,i_k;i) \otimes \mathcal{P}(i_{11},\ldots,i_{1l_1};i_1) \otimes \ldots \otimes \mathcal{P}(i_{k1},\ldots,i_{kl_k};i_k) \to \mathcal{P}(i_{11},\ldots,i_{kl_k};i),
	\end{equation*}
	\item unit maps
	\[
	v \to \mathcal{P}(i;i),
	\]
	%	called \emph{unit},
	\item symmetric group actions
	\[
	\sigma \in \Sigma_k \mapsto \left[\mathcal{P}(i_1,\ldots,i_k;i) \to \mathcal{P}(i_{\sigma^{-1}(1)},\ldots,i_{\sigma^{-1}(k)};i)\right],
	\]
\end{itemize}
satisfying associativity, unitality and equivariance axioms. When $I$ is the singleton set, we write
\[
	\mathcal{P}(k) := \mathcal{P}(\underbrace{*,\ldots,*}_k;*).
\]
A \emph{$\mathcal{P}$-algebra} in a symmetric monoidal $\mathcal{V}$-category $\mathcal{M}$ is given by an $I$-indexed collection $A := (A_i)_{i \in I}$ of objects in $\mathcal{M}$, together with maps
	\[
	\mathcal{P}(i_1,\ldots,i_k;i) \to \mathcal{M}(A_{i_1} \otimes \ldots \otimes A_{i_k},A_i),
	\]
	satisfying axioms. Finally, recall that for two coloured operad $\mathcal{P}$ and $\mathcal{Q}$, the \emph{Boardman-Vogt tensor product} $\mathcal{P} \otimes_{BV} \mathcal{Q}$ is the operad whose algebras are $\mathcal{P}$-algebras in the category of $\mathcal{Q}$-algebras, or equivalently $\mathcal{Q}$-algebras in the category of $\mathcal{P}$-algebras.% of an $I$-coloured operad $\mathcal{P}$ and a $J$-coloured operad $\mathcal{Q}$ is the $I \times J$-coloured operad $\mathcal{P} \otimes_{BV} \mathcal{Q}$ whose operations are generated by pairs $(i,q)$ where $i \in I$ and $q$ is an operation of $\mathcal{Q}$ and pairs $(p,j)$ where $p$ is an operation of $\mathcal{P}$ and $j \in J$.

\begin{definition}\label{definitioncmrotboxtimes}
	Let
	\[
		\mathcal{C}_m^{\rotboxtimes} := \mathcal{C}_m \otimes_{BV} \mathfrak{D}^m,
	\]
	where $\mathcal{C}_m$ is the \emph{little $m$-cubes operad} and the category $\mathfrak{D}^m$ is seen as an operad with only unary operations.
\end{definition}

The following is immediate:

\begin{proposition}\label{propositiondescriptionbvproduct}
	$\mathcal{C}_m^{\rotboxtimes}$ is the operad whose set of colours is the set of objects of $\mathfrak{D}^m$. The $k$-ary operations are ordered configurations of $k$ non-overlapping coloured little cubes inside the coloured unit $m$-cube. The colours satisfy the following condition. If the $i$-th little cube is coloured with $c_i \in \mathfrak{D}^m$ and the unit cube is coloured with $c \in \mathfrak{D}^m$, then $c_i \leq c$. The multiplication is given by plugging cubes of the same colour.
\end{proposition}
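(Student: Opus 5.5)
The plan is to compute $\mathcal{C}_m \otimes_{BV} \mathfrak{D}^m$ directly from the defining universal property: an algebra over it is the same thing as a $\mathfrak{D}^m$-algebra in $\mathcal{C}_m$-algebras. A $\mathfrak{D}^m$-algebra in a category $\mathcal{M}$ is just a functor $\mathfrak{D}^m \to \mathcal{M}$. So an algebra over the tensor product is a functor $\mathfrak{D}^m \to \mathrm{Alg}(\mathcal{C}_m)$, that is, a family $(A_c)_{c}$ of $\mathcal{C}_m$-algebras together with $\mathcal{C}_m$-algebra maps $A_c \to A_{c'}$ for $c \leq c'$, compatible with composition.

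First, I write down the standard presentation of the Boardman--Vogt tensor product by generators and relations. The set of colours is $I \times J$, here just $\mathrm{Ob}(\mathfrak{D}^m)$, since $\mathcal{C}_m$ has one colour. There are two families of generators:
\begin{itemize}
\item operations $(p,c)$ with $p \in \mathcal{C}_m(k)$ and all inputs and the output coloured $c$;
\item unary operations $(*,c\leq c')$.
\end{itemize}
The relations are the operad relations in each variable, together with the interchange relation. Here interchange says that $(*,c\leq c') \circ (p,c) = (p,c')\circ((*,c\leq c'),\ldots,(*,c\leq c'))$.

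Next, I use interchange to bring every composite of generators into a normal form: one operation $p$ with output colour $c$, precomposed in each input $i$ with a single unary map $c_i \leq c$. This works because unary maps can be pushed from the output through $p$ to the inputs, and composites of unary maps collapse because $\mathfrak{D}^m$ is a poset. Since $\mathfrak{D}^m$ is a poset, the morphism $c_i\to c$ is unique when it exists. Hence the space of such normal forms with given colours is $\mathcal{C}_m(k)$ if $c_i \le c$ for all $i$, and empty otherwise. This is exactly the configuration space of $k$ coloured little cubes described in the statement.

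Then I check that the normal form is unique, i.e.\ that the relations identify nothing further. For this I define the candidate operad $\mathcal{Q}$ with these operation spaces and composition by plugging cubes whose colours agree. Its composition is well defined because $\le$ is transitive: if $c_{ij}\le c_i\le c$ then $c_{ij}\le c$. I then verify directly that $\mathcal{Q}$-algebras are precisely functors $\mathfrak{D}^m\to\mathrm{Alg}(\mathcal{C}_m)$. An operation with inputs of colours $c_i\le c$ acts as $A_{c_1}\otimes\cdots\otimes A_{c_k}\to A_c^{\otimes k}\to A_c$, and the associativity of $\mathcal{Q}$ is precisely the statement that the maps $A_c\to A_{c'}$ are $\mathcal{C}_m$-algebra maps. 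By the universal property, $\mathcal{Q}\cong\mathcal{C}_m\otimes_{BV}\mathfrak{D}^m$.

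The main obstacle is the uniqueness of normal forms, i.e.\ the last step. It is handled by the algebra comparison, which yields a map from the tensor product onto $\mathcal{Q}$ that is inverse to the normal-form surjection. There is one subtle point to check along the way. Nullary operations ($k=0$) in colour $c$ must be compatible with all $c\le c'$; interchange guarantees this, since the empty family of inputs imposes no condition.
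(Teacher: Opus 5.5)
Your proposal is correct and is the direct computation the paper has in mind. The paper states the result as immediate, meaning a calculation from the generators-and-relations description of $\otimes_{BV}$, using that $\mathfrak{D}^m$-algebras are just functors out of $\mathfrak{D}^m$. Your normal-form reduction via interchange and your comparison map supply the details the paper omits. The decisive step is the operad map $\mathcal{C}_m\otimes_{BV}\mathfrak{D}^m\to\mathcal{Q}$, which exists because the generators land in $\mathcal{Q}$ compatibly with all relations; it is left inverse to the continuous normal-form surjection and so makes that surjection a homeomorphism.
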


%\begin{proof}
%	By a direct calculation.
%%	$\mathfrak{D}^m$-algebras are covariant presheaves over $\mathfrak{D}^m$ (also called $m$-cospans \cite[Definition 3.2]{bataninglobular}). This can be checked using the explicit description of the Boardman-Vogt tensor product in terms of generators and relations.
%\end{proof}

%\subsection{The operad $\mathcal{C}_m^\fivedots$}

%\subsection{Suboperad where the little disks are nicely placed}

To a pair $(\epsilon,l) \in \{-1,1\} \times \{1,\ldots,m\}$, we associate the half-space
\[
	H(\epsilon,l) = \{ (x_1,\ldots,x_m) \in V(l) \mid \epsilon x_l > 0 \}
\]
and to $l \in \{1,\ldots,m+1\}$, we associate the subspace
\[
	V(l) = \{(x_1,\ldots,x_m) \in \mathbb{R}^m \mid x_1 = \ldots = x_{l-1} = 0\}.
\]

%\begin{definition}
%	
%\end{definition}
%
%\begin{definition}
%	For $x \in \mathcal{C}_m^{\rotboxtimes}(k)$ and $i=1,\ldots,k$, let $c_i = (\epsilon_i,l_i) \in \mathfrak{D}^m$ be the colour of the $i$-th little cube $x_i$. $x_i$ is \emph{properly placed} if its centre lies in $V(l_i)$ and if $c_i$ is not equal to the target colour, then $x_i$ lies entirely in $H(\epsilon_i,l_i)$.
%\end{definition}

\begin{definition}
	Let $\mathcal{C}_m^{\fivedots}$ be the suboperad of $\mathcal{C}_m^{\rotboxtimes}$ of operations such that for all $i=1,\ldots,k$, if the $i$-th little disk is coloured with $c_i = (\epsilon_i,l_i)$, then its centre lies in $V(l_i)$. Moreover, if $c_i$ is not equal to the target colour, then the $i$-th little disk lies entirely in $H(\epsilon_i,l_i)$.
%	
%	Let $\mathcal{C}_m$ be the following coloured operad. Its set of colours is $L_m$. Let $(c_1,\ldots,c_k,c) \in L_m^{k+1}$ such that there is a morphism $c_i \to c$ in $\mathrm{Oct}(m)$ for all $i = 1,\ldots,k$. Then
%	\[
%	\mathcal{C}_m^m(c_1,\ldots,c_k;c)
%	\]
%	is the space of non-overlapping ordered configurations of $k$ little disks inside the $m$-dimensional unit disk. The $i$-th little disk has colour $c_i$. The unit disk has target colour $c$. The extra condition is the following. Let $(\epsilon_i,l_i) := c_i$ and $(\epsilon,l) := c$. Then the centre of the $i$-th disk should lie in the space
%	\[
%	\{(x_1,\ldots,x_m) \in \mathbb{R}^m \mid \text{$x_1=\ldots=x_{l_i-1}=0$ and if $l_i \leq l$, then $\epsilon_i \cdot x_{l_i} > 0$}  \}.
%	\]
%	
%	The composition is given by plugging disks as usual.
\end{definition}

%\begin{remark}
%	Algebras over $\mathcal{C}_m^{\fivedots}$ are given by pairs $(X_l,Y_l)$ of $\mathcal{C}_l$-algebras for $l=1,\ldots,m$, plus a space $Z$, together with extra actions maps.
%%	
%%	 an $2m+1$-tuple of spaces $(X_i)_{i \in L_m}$. For $l=0,\ldots,m$, the spaces $X_{-1,l}$ and $X_{1,l}$ have an $E_l$-algebra structure. Moreover, each pair $(X_{\epsilon_1,l},X_{\epsilon_2,l+1})$ has the structure of an algebra over the $l$-dimensional Swiss cheese operad.
%\end{remark}

\subsection{The operads ${\mathcal{K}_m^{\rotboxtimes}}$ and $\mathcal{K}_m^\fivedots$}\label{subsectioncompletegraphoperad}

%\subsection{The complete graph operad}

The \emph{complete graph operad} $\mathcal{K}_m$ \cite{bergercombinatorial} is the following one-coloured operad in $\mathrm{Cat}$. For $k \geq 0$,% let $\mathbb{K}_m(k)$ be the category whose set of objects is
\[
	\mathcal{K}_m(k) := \{1,\ldots,m\}^{k \choose 2} \times \Sigma_k,
\]
where ${k \choose 2}$ is the set of pairs $ij$ with $1 \leq i < j \leq k$. It has a poset structure given by $(\mu,\sigma) \leq (\nu,\tau)$ if for all $ij \in {k \choose 2}$, $\mu_{ij} < \nu_{ij}$ or $(\mu_{ij},\sigma_{ij}) = (\nu_{ij},\tau_{ij})$, where for $\sigma \in \Sigma_k$, $\sigma_{ij} \in \Sigma_2$ is the permutation of $i$ and $j$ under $\sigma$. 
%A morphism $\mu \to \mu'$ is given by $\sigma \in \Sigma_k$ such that for all $1 \leq i < j \leq k$, if $\sigma(j) > \sigma(i)$ then $\mu_{ij} < \mu'_{ij}$. 
%
%Now\[
%\mathcal{K}_m = \coprod_{k \geq 0} \mathcal{K}_m(k),
%\]
%
%Let $\mathbb{K}_m^*(k)$ be the category of pairs $()$
%Now $\mathcal{K}_m$ is given by the polynomial
%\[
%\xymatrix{
%	1 & \mathtt{CG}^* \ar[r] \ar[l] & \mathtt{CG} \ar[r] & 1
%}
%\]
%where
%\[
%\mathbb{K}_m = \coprod_{k \geq 0} \mathbb{K}_m(k)
%\]
%and $\mathbb{K}_m^*$ is the category of elements as described in the proof of Theorem \ref{theoremfullyfaithfulfunctor}. 
The multiplication is given by
\begin{equation}\label{equationmultiplicationkm}
	\begin{aligned}
		\mathcal{K}_m(k) \times \mathcal{K}_m(n_1) \ldots \mathcal{K}_m(n_k) &\to \mathcal{K}_m(n_1+\ldots+n_k) \\
		((\mu,\sigma),(\mu_1,\sigma_1),\ldots,(\mu_k,\sigma_k)) &\mapsto (\mu(\mu_1,\ldots,\mu_k),\sigma(\sigma_1,\ldots,\sigma_k))
	\end{aligned}
\end{equation}
where $\sigma(\sigma_1,\ldots,\sigma_k)$ is given by the permutation operad and
\[
\mu(\mu_1,\ldots,\mu_k)_{ij} = 
\begin{cases}
\mu_{ij} &\text{if $i,j$ belong to different blocks of ${n_1+\ldots+n_k \choose 2}$,} \\
(\mu_s)_{ij} &\text{if $i,j$ belong to the same block ${n_s \choose 2}$ of ${n_1+\ldots+n_k \choose 2}$.}
\end{cases}
\]

%\begin{definition}
%	A \emph{graph} $G$ is given by $k \geq 0$, a permutation $\sigma \in \Sigma_k$, a set of edges $\mathsf{E} \subset \{1 \leq i < j \leq k\}$ and weights $\mu_{ij} \in \{1,\ldots,m\}$ for all $ij \in \mathsf{E}$. $k$ is the \emph{number of vertices} of $G$.
%\end{definition}
%
%\begin{definition}
%	A \emph{complete graph} is a graph whose set of edges is $\{1 \leq i < j \leq k\}$.
%\end{definition}

\begin{definition}
	Let
	\[
	\mathcal{K}_m^{\rotboxtimes} := \mathcal{K}_m \otimes_{BV} \mathfrak{D}^m.
	\]
\end{definition}

%\begin{definition}
%	A \emph{colouring} of a graph $G$ with $k$ vertices is given by $c \in \mathfrak{D}^m$ together with a $k$-bouquet with target colour $c$.
%\end{definition}

\begin{proposition}
	$\mathcal{K}_m^{\rotboxtimes}$ is the operad whose set of colours is the set of objects of $\mathfrak{D}^m$. The $k$-ary operations are elements of $\mathcal{K}_m(k)$ together with a \emph{source colour} $c_i \in \mathfrak{D}^m$ for $i=1,\ldots,k$ and a \emph{target colour} $c \in \mathfrak{D}^m$. The colours satisfy the same condition as in Proposition \ref{propositiondescriptionbvproduct}. The multiplication is as in \eqref{equationmultiplicationkm}.
\end{proposition}

%\begin{proof}
%	By a direct calculation.
%	%	$\mathfrak{D}^m$-algebras are covariant presheaves over $\mathfrak{D}^m$ (also called $m$-cospans \cite[Definition 3.2]{bataninglobular}). This can be checked using the explicit description of the Boardman-Vogt tensor product in terms of generators and relations.
%\end{proof}

For $(\mu,\sigma) \in \mathcal{K}_m(k)$ and $ij \in {k \choose 2}$, let $s_{ij} := \sgn(\sigma_{ij})$.

%\begin{definition}
%	Let $G$ be a graph with $k$ vertices and $\mathbf{c}$ be a $k$-bouquet in $\mathfrak{D}^m$. $G$ and $\mathbf{c}$ are \emph{compatible} if for each edge $ij$ of $G$,
%	\[
%	c_i \leq (-s_{ij},\mu_{ij}) \text{ or } c_j \leq (s_{ij},\mu_{ij}).%,
%	\]
%	%	where $c_1,\ldots,c_k$ is the $k$-bouquet with target colour $c$ given by the colouring and 
%\end{definition}

\begin{definition}%\label{definitionsubpolymon}
	Let $\mathcal{K}_m^{\fivedots}$ be the suboperad of $\mathcal{K}_m^{\rotboxtimes}$ of operations $(\mu,\sigma)$ such that for all $ij \in {k \choose 2}$, $c_i \leq (-s_{ij},\mu_{ij})$ or $c_j \leq (s_{ij},\mu_{ij})$.
%	\[
%		c_i \leq (-s_{ij},\mu_{ij}) \text{ or } c_j \leq (s_{ij},\mu_{ij}).%,
%	\]
%	 are good.% \geq \min(l_i,l_j)$ and if $\mu_{ij} = \min(l_i,l_j)$ then one of the following is satisfied:
	%	\begin{itemize}
		%		\item $\epsilon_i=-1$ and $l_i \leq l_j$,
		%		\item $\epsilon_j=1$ and $l_j \leq l_i$.
		%	\end{itemize}
\end{definition}

%\begin{definition}
%	The \emph{complete graph operad} $\mathcal{K}_m$ is the 
%\end{definition}

\subsection{Homotopy equivalences of operads}

Let $X$ be a topological space and $\mathcal{A}$ a poset. Recall that a \emph{cellular $\mathcal{A}$-decomposition} \cite[Definition 1.6]{bergercombinatorial} of $X$ is a collection $(X_\alpha)_{\alpha \in \mathcal{A}}$ of subspaces of $X$ % admits an \emph{$\mathcal{A}$-cellulation} if there is a functor $c: \mathcal{A} \to \mathrm{Top}$ 
satisfying the following properties:
\begin{itemize}
	\item $X_\alpha \subset X_\beta$ if and only if $\alpha \leq \beta$,
	\item $\colim_{\alpha \in \mathcal{A}} X_\alpha \simeq X$,
	\item for each $\alpha \in \mathcal{A}$, the canonical map $\colim_{\beta < \alpha} X_\beta \to X_\alpha$ is a closed fibration,
	\item for each $\alpha \in \mathcal{A}$, $X_\alpha$ is contractible.
\end{itemize}
If $X$ admits a cellular $\mathcal{A}$-decomposition, then there is a homotopy equivalence $X \simeq B\mathcal{A}$ \cite[Lemma 1.7]{bergercombinatorial}, where $B$ is the \emph{classifying space functor}, obtained by taking the geometric realization of the nerve.% Finally, 

\begin{lemma}
	There is a homotopy equivalence of topological operads $\mathcal{C}_m^{\rotboxtimes} \simeq B \mathcal{K}_m^{\rotboxtimes}$.
\end{lemma}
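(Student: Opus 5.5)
The plan is to reduce to Berger's theorem that $\mathcal{C}_m \simeq B\mathcal{K}_m$ as operads \cite{bergercombinatorial}, and then tensor with $\mathfrak{D}^m$. By Proposition \ref{propositiondescriptionbvproduct} and its analogue for $\mathcal{K}_m^{\rotboxtimes}$, the space of operations of each operad splits the same way. Concretely,
\[
\mathcal{C}_m^{\rotboxtimes}(c_1,\ldots,c_k;c) = \mathcal{C}_m(k) \quad\text{and}\quad \mathcal{K}_m^{\rotboxtimes}(c_1,\ldots,c_k;c)=\mathcal{K}_m(k)
\]
whenever $c_i \leq c$ for all $i$, and both are empty otherwise. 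In both operads the multiplication is that of $\mathcal{C}_m$ (resp. $\mathcal{K}_m$) on the underlying operations, restricted to colour-compatible tuples. Since $B$ preserves finite products and the colour data is discrete, $B\mathcal{K}_m^{\rotboxtimes}$ is likewise $B\mathcal{K}_m(k)$ on compatible colour tuples and empty otherwise. The operad structure is again the one induced from $B\mathcal{K}_m$.

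The key step is therefore to have a zig-zag of operad maps between $\mathcal{C}_m$ and $B\mathcal{K}_m$ that are weak equivalences arity-wise. First I recall Berger's construction. The Fox--Neuwirth type cellular $\mathcal{K}_m(k)$-decomposition of the configuration space $F(\mathbb{R}^m,k)$ gives $F(\mathbb{R}^m,k)\simeq B\mathcal{K}_m(k)$ by \cite[Lemma 1.7]{bergercombinatorial}. This is promoted to an equivalence of operads via a zig-zag
\[
\mathcal{C}_m \xleftarrow{\sim} W\mathcal{C}_m \text{ or an intermediate operad } \mathcal{F}_m \xrightarrow{\sim} B\mathcal{K}_m.
\]
In Berger's paper the intermediate operad is built from the cells, e.g. the operad $\mathcal{C}_m^{\mathcal{K}_m}$ of little cubes with cells, or via the condensation $\colim_{\alpha} $ of the cellular decomposition functor. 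I will take this zig-zag as given.

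Next I apply the functor $-\otimes_{BV}\mathfrak{D}^m$ to each operad and map in the zig-zag. Because $\mathfrak{D}^m$ has only unary operations and is a poset, the tensor product has the explicit description above for any operad $\mathcal{P}$. Concretely, $(\mathcal{P}\otimes_{BV}\mathfrak{D}^m)(c_1,\ldots,c_k;c)$ is $\mathcal{P}(k)$ when all $c_i\leq c$ and empty otherwise. Checking this is the only real verification. It uses the generators-and-relations description: the interchange relations identify every composite of $\mathcal{P}$-operations and $\mathfrak{D}^m$-arrows with a single $\mathcal{P}$-operation decorated by a bouquet, and the poset property makes the bouquet unique. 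Given this description, the construction is functorial in $\mathcal{P}$. It also sends arity-wise weak equivalences to colour-wise weak equivalences, since each component is either the old map or the identity of the empty set. Finally, I check that $B(\mathcal{K}_m\otimes_{BV}\mathfrak{D}^m)\cong (B\mathcal{K}_m)\otimes_{BV}\mathfrak{D}^m$, which follows directly from the descriptions.

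The main obstacle is making sure the intermediate operad in Berger's zig-zag is a genuine (one-coloured) operad whose BV product with $\mathfrak{D}^m$ still has the simple description. For instance, the topological BV product is not always the naive one, and one might worry about whether the colimit involved commutes with tensoring. I would sidestep this by not using $\otimes_{BV}$ on the intermediate step. Instead I define the coloured operad $\mathcal{P}^{\mathfrak{D}^m}$ directly by the formula above, valid for any one-coloured operad $\mathcal{P}$. I then show $\mathcal{C}_m^{\rotboxtimes}=\mathcal{C}_m^{\mathfrak{D}^m}$ and $B\mathcal{K}_m^{\rotboxtimes}=(B\mathcal{K}_m)^{\mathfrak{D}^m}$, and transport Berger's zig-zag through this evidently functorial construction.
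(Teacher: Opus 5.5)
Your argument is correct, but it is organised differently from the paper's. The paper does not use Berger's theorem as a black box. It reruns the proof of \cite[Theorem 1.16]{bergercombinatorial} directly in the coloured setting. For each $k$ it exhibits an explicit cellular $\mathcal{K}_m^{\rotboxtimes}(k)$-decomposition of $\mathcal{C}_m^{\rotboxtimes}(k)$, with cells $\mathcal{C}_m^{\rotboxtimes}(k)_\alpha$ defined via separating hyperplanes. These cells ignore the colours, so on each colour-compatible component they are just Berger's cells of $\mathcal{C}_m(k)$.

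You instead take Berger's operad-level zig-zag $\mathcal{C}_m \simeq B\mathcal{K}_m$ as given and push it through the functor $\mathcal{P}\mapsto\mathcal{P}\otimes_{BV}\mathfrak{D}^m$. Because $\mathfrak{D}^m$ is a poset, this functor just copies $\mathcal{P}(k)$ onto colour-compatible tuples and commutes with $B$. Your route is cleaner and spares you from checking that Berger's cellular machinery works verbatim for coloured operads.

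What the paper's route buys is reusability. The explicit cells $\mathcal{C}_m^{\rotboxtimes}(k)_\alpha$ and their contractibility are the input to Lemma \ref{lemmaequivalenceoperads}, where the paper intersects these cells with $\mathcal{C}_m^\fivedots(k)$ and deforms. Your transport trick cannot handle that next lemma, because $\mathcal{C}_m^\fivedots$ and $\mathcal{K}_m^\fivedots$ are not of the form $\mathcal{P}\otimes_{BV}\mathfrak{D}^m$. There the colours constrain both the positions of the cubes and which elements of $\mathcal{K}_m(k)$ are allowed.
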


\begin{proof}
	The proof goes as for \cite[Theorem 1.16]{bergercombinatorial}. The argument is that for all $k \geq 0$, there is a cellular $\mathcal{K}_m^{\rotboxtimes}(k)$-decomposition of $\mathcal{C}_m^{\rotboxtimes}(k)$. For $x_1$ and $x_2$ two little cubes, we write $x_1 \square_\mu x_2$ if $x_1$ and $x_2$ are separated by a hyperplane $P_i$ perpendicular to the $i$-th coordinate axis for some $i \leq \mu$ such that, whenever there is no separating hyperplane $P_i$ for $i < \mu$, $x_1$ lies on the negative side of $P_\mu$ and $x_2$ on the positive side of $P_\mu$. For $\alpha = (\mu,\sigma) \in \mathcal{K}_m^{\rotboxtimes}(k)$, the desired cellular decomposition is given by%, for $\alpha \in \mathcal{K}_m^{\rotboxtimes}(k)$,
	\[
	\mathcal{C}_m^{\rotboxtimes}(k)_\alpha := \{x \in \mathcal{C}_m^{\rotboxtimes}(k) \mid x_i \square_{\mu_{ij}} x_j \text{ if } \sigma_{ij}=id \text{ and } x_j \square_{\mu_{ij}} x_i \text{ if } \sigma_{ij} = (12) \},
	\]
	where $x_i$ is the $i$-th little cube of $x$ for $i=1,\ldots,k$.
\end{proof}

\begin{lemma}\label{lemmaequivalenceoperads}
	There is a homotopy equivalence of topological operads $\mathcal{C}_m^\fivedots \simeq B \mathcal{K}_m^\fivedots$.
%	The canonical inclusion morphism of polynomial $2$-monads
%	\[
%		\mathcal{K}_m^\fivedots \to \mathcal{K}_m^{\rotboxtimes}
%	\]
%	is obtained from the canonical inclusion morphism of topological operad $\mathcal{C}_m^\fivedots \to \mathcal{C}_m^{\rotboxtimes}$ by applying the classifying space functor pointwise.
\end{lemma}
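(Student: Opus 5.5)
The plan is to follow the proof of the previous lemma and restrict the cellular $\mathcal{K}_m^{\rotboxtimes}(k)$-decomposition of $\mathcal{C}_m^{\rotboxtimes}(k)$ to the suboperads. Fix colours $c_1,\ldots,c_k,c$ and write $X := \mathcal{C}_m^\fivedots(c_1,\ldots,c_k;c)$ and $\mathcal{A} := \mathcal{K}_m^\fivedots(c_1,\ldots,c_k;c)$. Here $\mathcal{A}$ is a full subposet of $\mathcal{K}_m^{\rotboxtimes}(c_1,\ldots,c_k;c)$. For $\alpha \in \mathcal{A}$ put $X_\alpha := X \cap \mathcal{C}_m^{\rotboxtimes}(k)_\alpha$. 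I will show that $(X_\alpha)_{\alpha\in\mathcal{A}}$ is a cellular $\mathcal{A}$-decomposition of $X$. Then \cite[Lemma 1.7]{bergercombinatorial} gives $X \simeq B\mathcal{A}$ arity- and colourwise.

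Compatibility with the operad structure comes for free. The cells $\mathcal{C}_m^{\rotboxtimes}(k)_\alpha$ are compatible with composition and symmetric actions, as in \cite[Theorem 1.16]{bergercombinatorial}. Intersecting with a suboperad preserves this. So the zig-zag of operad maps $\mathcal{C}_m \leftarrow \colim \to B\mathcal{K}_m$ used by Berger restricts to a zig-zag $\mathcal{C}_m^\fivedots \leftarrow \colim_{\alpha} X_\alpha \cdot \to B\mathcal{K}_m^\fivedots$ of weak equivalences of operads.

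The core geometric step is to show that the cells of $X$ are indexed exactly by $\mathcal{A}$, that is, $X_\alpha\neq\emptyset$ only if $\alpha\in\mathcal{A}$, and that these cover $X$. Take $x\in X$ and two cubes $x_i,x_j$ with colours $c_i=(\epsilon_i,l_i)$ and $c_j=(\epsilon_j,l_j)$. By definition of $\mathcal{C}_m^\fivedots$, the centre of $x_i$ lies in $V(l_i)$, and $x_i$ lies in $H(\epsilon_i,l_i)$ whenever $c_i \neq c$. The same holds for $x_j$. I would do a case analysis on $l_i$ versus $l_j$, using the fact that at most one cube has the target colour. The case analysis should show the following: if the first separating hyperplane direction $\mu$ (in Berger's sense) for $x_i,x_j$ is such that $c_i \not\leq (-s,\mu)$, then $l_i\le\mu$ and the ordering forces $c_j\le(s,\mu)$. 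The underlying reason is that two cubes with centres in $V(l)$ cannot be separated in directions $<l$. So a separating coordinate $\mu<\min(l_i,l_j)$ is impossible, and at $\mu=\min(l_i,l_j)$ the half-space conditions dictate the orientation. This gives the condition defining $\mathcal{K}_m^\fivedots$ for every cell containing $x$. Conversely, any configuration in $\mathcal{C}_m^{\rotboxtimes}(k)_\alpha\cap X$ automatically has $\alpha\in\mathcal{A}$.

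The remaining axioms are inclusion iff $\alpha\le\beta$, the colimit, and closed cofibration. These are inherited by intersection, since $X$ is a closed-in-the-relevant-sense union of cells and the poset restriction is downward closed. Downward closure needs a check: if $\beta\le\alpha\in\mathcal{A}$, then $\beta\in\mathcal{A}$, because raising $\mu_{ij}$ only weakens the condition. The main obstacle is contractibility of each $X_\alpha$. Berger proves contractibility of the cells by a shrinking and sliding deformation, moving cubes along coordinate directions while preserving separation data. I would try to show that this deformation can be chosen to preserve the constraints. The constraints are that centres stay in $V(l_i)$, which is achieved by moving only coordinates $\ge l_i$ together with first shrinking the cubes toward their centres, and that cubes stay in $H(\epsilon_i,l_i)$. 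Concretely, I would first shrink all cubes radially about their centres, which preserves every constraint and every separation. Then I would contract to a standard configuration in $X_\alpha$ by a straight-line homotopy of centres performed coordinate by coordinate, from $x_m$ down to $x_1$, as in Berger's argument. The point to verify is that the convex-combination constraints defining these subspaces are preserved by this homotopy.
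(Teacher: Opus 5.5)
\textbf{The contractibility of each cell is only announced, and that is where the lemma's content lies.} Your set-up matches the paper's. You use the same cells $X_\alpha=X\cap\mathcal{C}_m^{\rotboxtimes}(k)_\alpha$. Your argument that a configuration of $X$ lies only in cells indexed by $\mathcal{A}$ is exactly the paper's covering argument: no separation is possible in directions $<\min(l_i,l_j)$, and the half-space conditions fix the orientation in direction $\min(l_i,l_j)$. The gap is the step you flag yourself, contractibility of $X_\alpha$.

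By your own indexing claim, $X_\alpha=\emptyset$ for $\alpha\notin\mathcal{A}$. For example, take $m=2$, $c_1=D=(1,2)$, $c_2=C=(-1,2)$, target $E$, and $\alpha=(\mu_{12},\sigma)=(2,id)$. Both cubes are centred on $V(2)$, so they are never separated in direction $1$. Cube $1$ lies in the upper half-plane and cube $2$ in the lower one. Hence the relation $x_1\square_2x_2$ (with $x_i$ the $i$-th cube) never holds.

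So any valid contraction must use the inequality $c_i\le(-s_{ij},\mu_{ij})$ or $c_j\le(s_{ij},\mu_{ij})$. Your sketch never does. The inequality can only enter through the ``standard configuration in $X_\alpha$'', which you assert but never construct.

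What you propose to verify is also not where the difficulty lies. The colour constraints (centre in $V(l_i)$, cube on the $\epsilon_i$-side of the hyperplane orthogonal to the $l_i$-th axis) are coordinatewise linear. Any coordinatewise straight-line homotopy preserves them automatically. The cell conditions $x_i\square_{\mu_{ij}}x_j$ are disjunctions over separating directions and are not convex. The obvious target for the ambient cell, with all cubes in $\sigma$-order along each axis, is not available here. In coordinate $d$, cubes of level $>d$ are pinned at $0$, and cubes coloured $(\pm1,d)$ are confined to one side.

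The paper avoids building a contraction by hand. It compares $X_\alpha$ with $\mathcal{C}_m^{\rotboxtimes}(k)_\alpha$, which is contractible by the previous lemma. It does so through the chain $\mathcal{C}_1\supset\cdots\supset\mathcal{C}_l$, which imposes the colour constraints one coordinate at a time. Re-centring maps $r_t\colon\mathcal{C}_t\to\mathcal{C}_{t+1}$ serve as homotopy inverses to the inclusions. The defining inequality of $\mathcal{K}_m^\fivedots$ is used exactly to show that two cubes never both have to cross a separating hyperplane orthogonal to the $t$-th axis.

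To finish your direct route instead, choose the target coordinatewise. In coordinate $d$, put cubes of level $>d$ at $0$, cubes coloured $(-1,d)$ on the negative side, and cubes coloured $(1,d)$ on the positive side, each block in $\sigma$-order; cubes of lower level are free. Then observe that, for pairs with $\mu_{ij}=d$, the defining inequality says precisely that $\sigma_{ij}$ agrees with the order negative $<$ pinned $<$ positive. You must still check pair by pair, after shrinking enough, that sliding the coordinates from $x_m$ down to $x_1$ preserves every $x_i\square_{\mu_{ij}}x_j$.

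Two smaller points.
\begin{enumerate}
\item $\mathcal{A}$ is upward closed, not downward closed, because lowering $\mu_{ij}$ strengthens the condition. For $c_1=C$, $c_2=D$ and target $E$, we have $(1,id)\le(2,id)$, but only $(2,id)\in\mathcal{A}$. This is harmless, since $X_\beta=\emptyset$ off $\mathcal{A}$.
\item The ``only if'' half of the inclusion axiom is not inherited. For colours $A,B$ with target $E$, every configuration is separated by the hyperplane where the first coordinate vanishes. So $X_{(1,id)}=X_{(2,id)}=X$, although $(2,id)\not\le(1,id)$. You should conclude $X\simeq B\mathcal{A}$ directly from the homotopy-colimit argument, which does not need that half.
\end{enumerate}
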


\begin{proof}
	For $\alpha \in \mathcal{K}_m^\fivedots(k)$, let
	\begin{equation}\label{equationcellulation}
		\mathcal{C}_m^\fivedots(k)_\alpha := \mathcal{C}_m^{\rotboxtimes}(k)_\alpha \cap \mathcal{C}_m^\fivedots(k).
	\end{equation}
	Let us prove that this space is contractible, the rest of the proof is straightforward. We will show that it is homotopy equivalent to $\mathcal{C}_m^{\rotboxtimes}(k)_\alpha$, which is contractible by the previous lemma. Let $c = (\epsilon,l)$ be the target colour of $\alpha=(\mu,\sigma)$ and $c_i = (\epsilon_i,l_i)$ be the $i$-th source colour for $i=1,\ldots,k$. For $t \in \{1,\ldots,l\}$, let $\mathcal{C}_t$ be the space of operations $x \in \mathcal{C}_m^{\rotboxtimes}(k)_\alpha$ such that for all $i=1,\ldots,k$, the centre of $x_i$ lies in $V(\lambda)$, where $\lambda = \min(t,l_i)$, and if $l_i < t$, then $x_i$ lies entirely in $H(\epsilon_i,l_i)$. 
%	\[
%		\mathcal{C}_s := \{x \in \mathcal{C}_m^{\rotboxtimes}(k)_\alpha \mid \text{the centre of $x_i$ lies in $V(\min(s,l_i))$ and if $l_i < s$ then $x_i$ lies entirely in $H(\epsilon_i,l_i)$}\}.
%	\]
	Then $\mathcal{C}_1 = \mathcal{C}_m^{\rotboxtimes}(k)_\alpha$ and $\mathcal{C}_l = \mathcal{C}_m^\fivedots(k)_\alpha$. Note that for $t \in \{1,\ldots,l-1\}$, there is a canonical inclusion map $i_t: \mathcal{C}_{t+1} \to \mathcal{C}_t$. One can also construct a map $r_t: \mathcal{C}_t \to \mathcal{C}_{t+1}$ which re-centres the little cubes with respect to the $t$-th coordinate, as in the following picture: 
%	We use the same arguments as in the proof \cite[Theorem 1.16]{bergercombinatorial}. For $c_1$ and $c_2$ two little disks, we write $c_1 \square_\mu c_2$ if $c_1$ and $c_2$ are separated by a hyperplane $H_i$ perpendicular to the $i$-th coordinate axis for some $i \leq \mu+1$ such that, whenever there is no separating hyperplane $H_i$ for $i < \mu+1$, $c_1$ lies on the negative side of $H_{\mu+1}$ and $c_2$ on the positive side of $H_{\mu+1}$. We define the cellulation $c: \mathcal{K}_m(k) \to \mathrm{Top}$ by
%	\[
%		c_\alpha := \{(d_1,\ldots,d_k) \in \mathcal{C}_m(k) \mid d_i \square_{\mu_{ij}} d_j \text{ if } \sigma(i) < \sigma(j) \text{ and } d_j \square_{\mu_{ij}} d_i \text{ if } \sigma(j) < \sigma(i) \}.
%	\]
%	The only thing we need to show is that $c_\alpha$ is the same when we take configurations of disks in the big operad or in the suboperad.
%	
	\[
	\begin{tikzpicture}[scale=.8]
		\draw (-2,-2) rectangle (2,2);
		\draw (-1.6,-1.2) rectangle (-.6,-.2) node[midway]{$A$};
		\draw (-.75,.2) rectangle (.45,1.4) node[midway]{$A$};
		\draw (.9,-1.8) rectangle (1.6,-1.1) node[midway]{$B$};
		\draw (-.05,-1.65) rectangle (.65,-.95) node[midway]{$C$};
		\draw (.6,-.1) rectangle (1.1,.4) node[midway]{$D$};
		\draw (.8,1.1) rectangle (1.6,1.9) node[midway]{$D$};
		\draw (.3,-.85) rectangle (.9,-.25) node[midway]{$E$};
		
		\draw (2.5,0) node{$\longmapsto$};
		\draw (2.5,0) node[above]{$r_1$};
		
		\begin{scope}[shift={(5,0)}]
		\draw (-2,-2) rectangle (2,2);
		\draw (-1.6,-1.2) rectangle (-.6,-.2) node[midway]{$A$};
		\draw (-1.75,.2) rectangle (-.55,1.4) node[midway]{$A$};
		\draw (.9,-1.8) rectangle (1.6,-1.1) node[midway]{$B$};
		\draw (-.35,-1.65) rectangle (.35,-.95) node[midway]{$C$};
		\draw (-.25,-.1) rectangle (.25,.4) node[midway]{$D$};
		\draw (-.4,1.1) rectangle (.4,1.9) node[midway]{$D$};
		\draw (-.3,-.85) rectangle (.3,-.25) node[midway]{$E$};
		\end{scope}

		\draw (7.5,0) node{$\longmapsto$};
		\draw (7.5,0) node[above]{$r_2$};

		\begin{scope}[shift={(10,0)}]
		\draw (-2,-2) rectangle (2,2);
		\draw (-1.6,-1.2) rectangle (-.6,-.2) node[midway]{$A$};
		\draw (-1.75,.2) rectangle (-.55,1.4) node[midway]{$A$};
		\draw (.9,-1.8) rectangle (1.6,-1.1) node[midway]{$B$};
		\draw (-.35,-1.65) rectangle (.35,-.95) node[midway]{$C$};
		\draw (-.25,.45) rectangle (.25,.95) node[midway]{$D$};
		\draw (-.4,1.1) rectangle (.4,1.9) node[midway]{$D$};
		\draw (-.3,-.3) rectangle (.3,.3) node[midway]{$E$};
		\end{scope}
	\end{tikzpicture}
	\]
%	We need to construct a deformation retract of $c_\alpha$ to when it is restricted to the suboperad. It works as follows. We recentre along one coordinate axis at a time, starting from $1$ and ending at $m$. Recentre along a coordinate $\mu$ means that we move each disk with label $(\epsilon_i,l_i)$ as follows. If $l_i > \mu$, then we move the coordinate to $0$. If $l_i = \mu$ and $\epsilon_i < 0$, we move to a negative coordinate. If $l_i = \mu$ and $\epsilon_i > 0$, we move to a positive coordinate.
	Let us prove that the map $r_t$ can be constructed. Let $x \in \mathcal{C}_t$ and $ij \in {k \choose 2}$ such that there is a line in the direction of the $t$-th coordinate which intersects both $x_i$ and $x_j$. Then $x_i$ and $x_j$ are separated by a hyperplane $P_t$ perpendicular to the $t$-th coordinate axis and there is no separating hyperplane perpendicular to another coordinate. To simplify, let us assume without loss of generality that $\sigma_{ij}=id$, the case $\sigma_{ij}=(12)$ being completely symmetric. Since $x \in \mathcal{C}_m^{\rotboxtimes}(k)_\alpha$, $x_i \square_{\mu_{ij}} x_j$, which means that $\mu_{ij} = t$ and $x_i$ lies on the negative side of $P_t$ and $x_j$ on the positive side of $P_t$. If $x_i$ needs to move to the positive side in order to be re-centred with respect to the $t$-th coordinate, then $c_i > (-1,\mu_{ij})$. Similarly, if $x_j$ needs to move to the negative side, then $c_j > (1,\mu_{ij})$. Since $\alpha \in \mathcal{K}_m^\fivedots(k)$, $c_i \leq (-1,\mu_{ij})$ or $c_j \leq (1,\mu_{ij})$. So both cubes won't need to move to the opposite side at the same time and there is no obstruction to constructing the map $r_t$ (the cubes can be made smaller if necessary). The maps $i_t$ and $r_t$ are homotopy inverses of each other, which proves that the space \eqref{equationcellulation} is contractible.
	
%	Let us assume by contradiction that for some $s \in \{1,\ldots,l-1\}$, the map $r_s$ can not be canonically defined or there is no homotopy between $i_s \cdot r_s$ and the identity map. This happens only if there is a configuration $x \in \mathcal{C}_s$ such that re-centring the little cubes $x_i$ and $x_j$ would force them to overlap. This would occur if $c_i > (1,s)$ and $c_j > (-1,s)$, which would imply that there is a hyperplane perpendicular to the $s$-th coordinate separating $x_i$ and $x_j$. Then $s \geq \mu_{ij}$ and we deduce that $c_i > (1,\mu_{ij})$ and $c_j > (-1,\mu_{ij})$. But this contradicts the fact that $\alpha$ belongs to $\mathcal{K}_m^\fivedots(k)$. In conclusion, the space $\mathcal{C}_m^\fivedots(k)_\alpha$ is indeed contractible.
%
%	The obstruction to having such a deformation retract is if for some $l=1,\ldots,m$ and two little disks $c_i$ and $c_j$, $c_i$ needs to go to the negative half-space in direction $l$ but is in the positive one, and $c_j$ needs to go to the positive but is in the negative one. This would contradict the assumption of being properly labelled. 
	In order to have $\colim_{\alpha} \mathcal{C}_m^\fivedots(k)_\alpha \simeq \mathcal{C}_m^\fivedots(k)$, we need to check that for all $x \in \mathcal{C}_m^\fivedots(k)$, there is $\alpha \in \mathcal{K}_m^\fivedots(k)$ such that $x \in \mathcal{C}_m^\fivedots(k)_\alpha$. %For $ij \in {k \choose 2}$, any two cubes $x_i$ and $x_j$, 
%	For $y$ and $z$ in $\mathbb{R}^m$, let $\mu :=  \in \{1,\ldots,m\}$ be the
%	For $ij \in {k \choose 2}$, let $\mu_{ij}$ be the minimum in $\{1,\ldots,m\}$ such that $x_i$ and $x_j$ are separated by a hyperplane perpendicular to the $\mu_{ij}$-th coordinate. Let $\sigma_{ij}=id$ if $x_i$ is on the negative side of this hyperplane and $x_j$ is on the positive side, and $\sigma_{ij}=(12)$ if $x_j$ is on the negative side and $x_i$ on the positive side. It is immediate that $x \in \mathcal{C}_m^{\rotboxtimes}(k)_\alpha$. Let us prove that $\alpha \in \mathcal{K}_m^\fivedots(k)$. For $ij \in {k \choose 2}$, the centres of both $x_i$ and $x_j$ lie in $V(\lambda)$, where $\lambda=\min(l_i,l_j)$.
	If $x \in \mathcal{C}_m^\fivedots(k)$, then $x \in \mathcal{C}_m^{\rotboxtimes}(k)$ and we know from the previous lemma that there is $\alpha \in \mathcal{K}_m^{\rotboxtimes}(k)$ such that $x \in \mathcal{C}_m^{\rotboxtimes}(k)_\alpha$. It remains to prove that $\alpha \in \mathcal{K}_m^\fivedots(k)$ automatically. Let $ij \in {k \choose 2}$ and let us prove that $c_i \leq (-s_{ij},\mu_{ij})$ or $c_j \leq (s_{ij},\mu_{ij})$. We can assume to simplify that $\sigma_{ij}=id$, without loss of generality. Since $x \in \mathcal{C}_m^\fivedots(k)$, the centres of both $x_i$ and $x_j$ lie in $V(\lambda)$, where $\lambda=\min(l_i,l_j)$. In particular there is no separating hyperplane for $t < \lambda$. Since $x_i \square_{\mu_{ij}} x_j$, we must have $\mu_{ij} \geq \lambda$. If $\mu_{ij} > \lambda$, the conclusion is immediate. If $\mu_{ij}=\lambda$, this means that $x_i$ lies on the negative side of a hyperplane perpendicular to the $\lambda$-th coordinate and $x_j$ on the positive side. Then $x_i$ lies entirely in $H(-1,\lambda)$ or $x_j$ lies entirely in $H(1,\lambda)$. Using the fact that $x \in \mathcal{C}_m^\fivedots(k)$ again, this implies that $c_i \leq (-1,\lambda)$ or $c_j \leq (1,\lambda)$.
	
	In conclusion, the conditions in the definition of $\mathcal{K}_m^{\fivedots}$ are indeed the ones necessary to have the equivalence of operads of the lemma. The remaining arguments are as in the proof of \cite[Theorem 3.5]{quesney}.
%	Note that our result also extends \cite[Theorem 3.5]{quesney}.
\end{proof}

\subsection{Corresponding polynomial $2$-monads}

%Let us describe the polynomial $2$-monad $\mathcal{K}_m$ corresponding to the \emph{complete graph operad} \cite{bergercombinatorial}.

\begin{definition}\label{definitionkmpoly2mon}
	Let $\mathcal{K}_m$ be the polynomial $2$-monad given by the polynomial
	\[
	\xymatrix{
		1 & \mathtt{CG}^* \ar[r] \ar[l] & \mathtt{CG} \ar[r] & 1,
%		1 & \mathbb{K}_m^* \ar[r] \ar[l] & \mathbb{K}_m \ar[r] & 1
	}
	\]
	where $\mathtt{CG} = \coprod_{k \geq 0} \mathtt{CG}(k)$ and $\mathtt{CG}(k)$ is the category whose set of objects is
	\[
	\{1,\ldots,m\}^{k \choose 2}
	\]
	and a morphism $\mu \to \nu$ is given by $\sigma \in \Sigma_k$ such that for all $ij \in {k \choose 2}$, if $\sigma(j) > \sigma(i)$ then $\mu_{ij} < \nu_{ij}$. Similarly, $\mathtt{CG}^* = \coprod_{k \geq 0} \mathtt{CG}^*(k)$, where $\mathtt{CG}^*(k)$ is the category whose objects are given by an object of $\mathtt{CG}(k)$ together with $i \in \{1,\ldots,k\}$ and morphisms $(\mu,i) \to (\nu,j)$ are given by a morphism $\sigma: \mu \to \nu$ such that $\sigma(i)=j$. As before, the middle map forgets $i$. Multiplication is as in \eqref{equationmultiplicationkm}.% preserve the basepoint.
\end{definition}

\begin{proposition}\label{propositionequivalencealgebras}
	The algebras of the complete graph operad $\mathcal{K}_m$ are the same as the algebras of the polynomial $2$-monad $\mathcal{K}_m$.
\end{proposition}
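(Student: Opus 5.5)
Unwinding the definitions on both sides, the plan is to show that the two notions of algebra carry the same data and satisfy the same axioms. By the Remark on the functor $P$, an algebra for the polynomial $2$-monad $\mathcal{K}_m$ is an algebra for the $2$-monad on $\mathrm{Cat}$ given by $A \mapsto p_* s^* A$ followed by $t_!$. Since $s$ goes to the terminal category, $s^*A$ is the projection $\mathtt{CG}^* \times A \to \mathtt{CG}^*$. The first step is to compute the dependent product $p_*$ along the middle map $p: \mathtt{CG}^* \to \mathtt{CG}$. Its fibre over $\mu \in \mathtt{CG}(k)$ is the discrete set $\{1,\ldots,k\}$, and a morphism $\sigma: \mu \to \nu$ acts on fibres by the bijection $\sigma$. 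From this I would deduce the formula
\[
\mathcal{K}_m(A) = \coprod_{k \geq 0} \int^{\mathtt{CG}(k)} \ldots \;\simeq\; \coprod_{k\ge 0} \mathtt{CG}(k) \ltimes A^k .
\]
Concretely, objects are pairs $(\mu,(a_1,\ldots,a_k))$. A morphism $(\mu,\mathbf a) \to (\nu,\mathbf b)$ is a morphism $\sigma:\mu\to\nu$ of $\mathtt{CG}(k)$ together with maps $a_i \to b_{\sigma(i)}$, exactly as in the example of $\mathbf{S}$.

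The second step compares this with the operadic side. The key observation is that the Grothendieck-type category $\mathtt{CG}(k)$ is a reformulation of the poset $\mathcal{K}_m(k)=\{1,\ldots,m\}^{k\choose 2}\times\Sigma_k$ modulo the free $\Sigma_k$-action. I would write down the equivalence explicitly: send $(\mu,\sigma)$ to the relabelled weights, and check that a relation $(\mu,\sigma)\le(\nu,\tau)$ corresponds to a unique morphism $\tau\sigma^{-1}$ of $\mathtt{CG}(k)$ satisfying the stated inequality condition. This shows $\mathtt{CG}(k) \simeq \mathcal{K}_m(k)/\Sigma_k$ as action-groupoid-type category, i.e. the category of elements of the $\Sigma_k$-poset. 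Consequently $\mathtt{CG}(k)\ltimes A^k \cong \mathcal{K}_m(k) \times_{\Sigma_k} A^k$, which is the value on $A$ of the free algebra monad of the categorical operad $\mathcal{K}_m$ in $(\mathrm{Cat},\times)$.

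The third step matches the monad structure. I would check that, under these isomorphisms, the multiplication induced by \eqref{equationmultiplicationkm} on $\mathtt{CG}$ corresponds to operadic composition followed by the coinvariant identification, and that units correspond. This gives an isomorphism of $2$-monads on $\mathrm{Cat}$, hence an isomorphism of their categories of algebras. Operad algebras in $\mathrm{Cat}$ are precisely algebras for the monad $A\mapsto\coprod_k \mathcal{K}_m(k)\times_{\Sigma_k}A^k$.

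The main obstacle I expect is the second step: making the dictionary between the ordering condition "$\sigma(j)>\sigma(i)\Rightarrow \mu_{ij}<\nu_{ij}$" in $\mathtt{CG}$ and the poset order on $\mathcal{K}_m(k)$ precise, including the correct index conventions for $\sigma_{ij}$. This is also where freeness of the $\Sigma_k$-action is used, so that taking coinvariants agrees with the Grothendieck construction up to equivalence rather than producing a genuinely lax quotient. I would first try to verify it on $k=2$, where $\mathtt{CG}(2)$ has objects $\{1,\ldots,m\}$ and the check is transparent, and then argue pairwise, since all conditions are imposed edge by edge.
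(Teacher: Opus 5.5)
Your argument is correct, but it is packaged differently from the paper's proof.

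\textbf{The paper's route.} The paper never compares monads. It unpacks a strict algebra of the polynomial $2$-monad as functors $\hat{\mu}: A^k \to A$, one for each object $\mu$ of $\mathtt{CG}(k)$, with transformations $\hat{\mu} \Rightarrow \hat{\nu}\cdot\sigma$ for each $\sigma: \mu \to \nu$. It unpacks an operad algebra as functors $(\hat{\mu},\hat{\sigma})$ with transformations indexed by the poset $\mathcal{K}_m(k)$. It then writes down mutually inverse translations: $\hat{\mu} := (\hat{\mu},\widehat{\mathrm{id}})$ in one direction, and $(\hat{\mu},\hat{\sigma}) := \hat{\mu}\cdot\sigma$, with the transformation attached to $\tau\sigma^{-1}$, in the other.

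\textbf{Your route.} You establish an isomorphism of $2$-monads $\coprod_k \mathtt{CG}(k)\ltimes A^k \cong \coprod_k \mathcal{K}_m(k)\times_{\Sigma_k} A^k$. You then quote the standard fact that operad algebras in $\mathrm{Cat}$ are the algebras of the right-hand monad.

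\textbf{Comparison.} The dictionary is the same in both: representatives with $\sigma=\mathrm{id}$, and $(\mu,\sigma)\le(\nu,\tau)$ matched with $\tau\sigma^{-1}$. The paper uses the equivariance axiom to rebuild $(\hat\mu,\hat\sigma)$ from $\hat\mu$, whereas you use freeness of the action to compute a quotient. Your route costs an explicit quotient computation in $\mathrm{Cat}$ and a check that the multiplications agree. In return it makes systematic the axiom verification that the paper leaves as ``satisfying axioms''. Being an isomorphism of $2$-monads, it also identifies morphisms of algebras (strict, pseudo or lax), not only the algebras.

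\textbf{One point to tighten in your second step.} You need $\mathtt{CG}(k)$ to be \emph{isomorphic} to the strict orbit quotient $\mathcal{K}_m(k)/\Sigma_k$. Being equivalent to the category of elements of the $\Sigma_k$-action is true but not enough for your conclusion.
\begin{itemize}
\item The isomorphism holds because $\Sigma_k$ acts freely on objects. The quotient then has orbits of objects as objects and orbits of relations as morphisms. Each orbit of relations has a unique representative with source $(\mu,\mathrm{id})$.
\item The same orbit computation gives $\mathtt{CG}(k)\ltimes A^k \cong \mathcal{K}_m(k)\times_{\Sigma_k}A^k$, using that the diagonal action on $\mathcal{K}_m(k)\times A^k$ is also free on objects. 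This is a parallel computation, not a formal consequence of the first isomorphism.
\item The category of elements has $k!$ isomorphic copies of each object of $\mathtt{CG}(k)$, so a $2$-monad built on it is only equivalent to yours. Equivalent $2$-monads can have non-isomorphic categories of strict algebras. For the associative operad, such a construction yields algebras in which $\hat{x}$ and $\widehat{\mathrm{id}}\cdot x$ are only coherently isomorphic, not equal.
\end{itemize}
So an equivalence ``$\simeq$'' would not justify your ``hence an isomorphism of their categories of algebras''. State and use the isomorphism with the strict quotient instead.
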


\begin{proof}
	The $2$-monad induced by the polynomial $2$-monad $\mathcal{K}_m$ sends a category $A$ to the category whose objects are given by $\mu \in \{1,\ldots,m\}^{k \choose 2}$ together with objects $(a_1,\ldots,a_k)$ in $A$. The morphisms are morphisms $\sigma: \mu \to \nu$ in $\mathtt{CG}(k)$ together with a morphism $a_i \to b_{\sigma i}$ for $i=1,\ldots,k$. Abusing notations, for $\sigma \in \Sigma_k$, let $\sigma: A^k \to A^k$ be the functor sending $(a_1,\ldots,a_k)$ to $(a_{\sigma 1},\ldots,a_{\sigma k})$. A strict categorical algebra of the polynomial $2$-monad $\mathcal{K}_m$ is given by a category $A$ together with a functor $\hat{\mu}: A^k \to A$ for each $\mu \in \{1,\ldots,m\}^{k \choose 2}$ and a natural transformation $\hat{\mu} \Rightarrow \hat{\nu} \cdot \sigma$ for each morphism $\sigma: \mu \to \nu$ in $\mathtt{CG}(k)$, satisfying axioms. On the other hand, an algebra of the complete graph operad is given by a category $A$, a functor $(\hat{\mu},\hat{\sigma}): A^k \to A$ for each $(\mu,\sigma) \in \mathcal{K}_m(k)$ and a natural transformation $(\hat{\mu},\hat{\sigma}) \Rightarrow (\hat{\nu},\hat{\tau})$ for each morphism $(\mu,\sigma) \to (\nu,\tau)$ in $\mathcal{K}_m(k)$, again satisfying axioms. From an algebra of $\mathcal{K}_m$ as a categorical operad, we get an algebra of $\mathcal{K}_m$ as a polynomial $2$-monad by taking $\hat{\mu} = (\hat{\mu},\hat{id})$ and $\hat{\mu} \Rightarrow \hat{\nu} \cdot \sigma$ associated to $(\mu,id) \to (\nu,\sigma)$. In the other direction, we take $(\hat{\mu},\hat{\sigma})$ as the composite $\hat{\mu} \cdot \sigma$ and $(\hat{\mu},\hat{\sigma}) \Rightarrow (\hat{\nu},\hat{\tau})$ associated to $\tau\sigma^{-1}: \mu \to \nu$. This gives us two functors between the categories of algebras which are clearly inverse of each other.
\end{proof}

\begin{remark}
	In \cite{weberoperads}, Weber constructed a $2$-monad $T/\Sigma$ from an operad $T$ in $\mathrm{Set}$. He proved that if $T$ is \emph{$\Sigma$-free} \cite[Definition 6.1]{weberoperads}, then $T/\Sigma$ is a polynomial $2$-monad. Moreover, the algebras of $T$ and $T/\Sigma$ coincide. We could extend this construction to when $T$ is an operad in $\mathrm{Cat}$. Then the polynomial $2$-monad $\mathcal{K}_m$ is actually $T/\Sigma$ when $T$ is the complete graph operad.
\end{remark}

%In view of the previous proposition, the polynomial monad $\overline{\mathcal{K}}_m$ will be denoted as $\mathcal{K}_m$, abusing notations.

\begin{definition}\label{definitionkmrotboxpolymon}
	Let $\mathcal{K}_m^{\rotboxtimes}$ be the polynomial $2$-monad given by the polynomial
	\[
	\xymatrix{
		\mathrm{ob}(\mathfrak{D}^m) & \mathtt{CCG}^* \ar[l] \ar[r] & \mathtt{CCG} \ar[r] & \mathrm{ob}(\mathfrak{D}^m)
	}
	\]
	where $\mathrm{ob}(\mathfrak{D}^m)$ is the set of objects of $\mathfrak{D}^m$. $\mathtt{CCG} = \coprod_{k \geq 0} \mathtt{CCG}(k)$ and $\mathtt{CCG}(k)$ is the category whose objects are given by $\mu \in \mathtt{CG}(k)$ equipped with $c_i \in \mathfrak{D}^m$ for $i=1,\ldots,k$ and $c \in \mathfrak{D}^m$. The morphisms $(\mu,c_1,\ldots,c_k,c) \to (\nu,d_1,\ldots,d_k,d)$ are given by morphisms $\sigma: \mu \to \nu$ in $\mathtt{CG}(k)$ such that $c_i=d_{\sigma i}$ for $i=1,\ldots,k$ and $c=d$. Again, $\mathtt{CCG}^* = \coprod_{k \geq 0} \mathtt{CCG}^*(k)$, where $\mathtt{CCG}^*(k)$ is the category whose objects are given by an object of $\mathtt{CCG}(k)$ together with $i \in \{1,\ldots,k\}$. The source map returns the colour $c_i$. The middle map forgets $i$. The target map returns the colour $c$.
\end{definition}

\begin{definition}%\label{definitionsubpolymon}
	Let $\mathcal{K}_m^{\fivedots}$ be the polynomial $2$-monad given by the polynomial
	\[
	\xymatrix{
		\mathrm{ob}(\mathfrak{D}^m) & \mathtt{PCG}^* \ar[l] \ar[r] & \mathtt{PCG} \ar[r] & \mathrm{ob}(\mathfrak{D}^m)
	}
	\]
%	\[
%	\xymatrix{
%		\mathrm{ob}(\mathfrak{D}^m) & (\mathbb{K}_m^{\fivedots})^* \ar[l] \ar[r] & \mathbb{K}_m^{\fivedots} \ar[r] & \mathrm{ob}(\mathfrak{D}^m)
%	}
%	\]
	where $\mathtt{PCG}$ is the full subcategory of $\mathtt{CCG}$ of objects $(\mu,c_1,\ldots,c_k,c) \in \mathtt{CCG}(k)$ such that for all $ij \in {k \choose 2}$, $c_i \leq (-1,\mu_{ij})$ or $c_j \leq (1,\mu_{ij})$. The rest of the description is as in Definition \ref{definitionkmrotboxpolymon}.% \geq \min(l_i,l_j)$ and if $\mu_{ij} = \min(l_i,l_j)$ then one of the following is satisfied:
	%	\begin{itemize}
	%		\item $\epsilon_i=-1$ and $l_i \leq l_j$,
	%		\item $\epsilon_j=1$ and $l_j \leq l_i$.
	%	\end{itemize}
\end{definition}

\begin{proposition}%\label{propositionequivalencealgebras}
	The algebras of the categorical operads $\mathcal{K}_m^{\rotboxtimes}$ and $\mathcal{K}_m^\fivedots$ are the same as the algebras of $\mathcal{K}_m^{\rotboxtimes}$ and $\mathcal{K}_m^\fivedots$ as polynomial $2$-monads.
\end{proposition}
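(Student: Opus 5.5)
The plan is to repeat the argument of Proposition \ref{propositionequivalencealgebras}, now keeping track of colours. First we unwind what a strict algebra of the polynomial $2$-monad $\mathcal{K}_m^{\rotboxtimes}$ is. The induced $2$-monad acts on $\mathrm{Cat}/\mathrm{ob}(\mathfrak{D}^m)$, that is, on families $(A_c)_{c \in \mathfrak{D}^m}$ of categories. It sends such a family to the family whose component at $c$ has:
\begin{itemize}
\item as objects, tuples $(\mu,c_1,\ldots,c_k,c)$ in $\mathtt{CCG}(k)$ together with objects $a_i \in A_{c_i}$;
\item as morphisms, a morphism $\sigma$ in $\mathtt{CCG}(k)$ together with maps $a_i \to b_{\sigma i}$ in $A_{c_i}=A_{d_{\sigma i}}$.
\end{itemize}
Hence a strict algebra consists of:
\begin{itemize}
\item functors $\hat{\mu}: A_{c_1}\times\cdots\times A_{c_k} \to A_c$, one for each object of $\mathtt{CCG}(k)$;
\item natural transformations $\hat{\mu} \Rightarrow \hat{\nu}\cdot\sigma$, one for each morphism $\sigma:(\mu,\mathbf{c},c)\to(\nu,\mathbf{d},c)$;
\end{itemize}
subject to associativity and unit axioms coming from the multiplication \eqref{equationmultiplicationkm} and the unit. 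The unit is the object of $\mathtt{CCG}(1)$ with $c_1=c$. Unary operations with $c_1<c$ supply the action of $\mathfrak{D}^m$. For $\mathcal{K}_m^\fivedots$ the description is identical, but only objects of the full subcategory $\mathtt{PCG}$ occur.

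Next we describe algebras over the categorical operad $\mathcal{K}_m^{\rotboxtimes}$, using the explicit description of the Boardman–Vogt product given in the proposition following its definition. An algebra assigns to each operation $(\mu,\sigma;c_1,\ldots,c_k;c)$ a functor $A_{c_1}\times\cdots\times A_{c_k}\to A_c$. It assigns to each morphism $(\mu,\sigma)\le(\nu,\tau)$ with fixed colours a natural transformation. These data are compatible with operadic composition, units and the symmetric group actions. Note that $\Sigma_k$ permutes the source colours together with the $\sigma$-component.

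We then write down the two mutually inverse functors exactly as before. From an operad algebra we set $\hat{\mu}:=(\hat{\mu},\hat{id})$ with the given colours. The transformation for $\sigma:\mu\to\nu$ is taken from $(\mu,id)\le(\nu,\sigma)$, after using equivariance to identify the colour tuple $\mathbf{d}$ with $\sigma$ applied to $\mathbf{c}$. Conversely, we define $(\hat{\mu},\hat{\sigma}):=\hat{\mu}\cdot\sigma$, and take the transformation attached to $\tau\sigma^{-1}:\mu\to\nu$. The colour condition $c_i=d_{\sigma i}$ in Definition \ref{definitionkmrotboxpolymon} is precisely what makes these assignments well typed. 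Compatibility with the multiplications holds because both multiplications are given by the same formula \eqref{equationmultiplicationkm} together with the permutation operad.

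The main point needing care is the suboperad $\mathcal{K}_m^\fivedots$. We have to check that the operadic condition, ``$c_i\le(-s_{ij},\mu_{ij})$ or $c_j\le(s_{ij},\mu_{ij})$'', matches the polynomial condition, which only has $s_{ij}=1$ because there $\sigma=id$. The strategy is to show that the operadic condition is invariant under the symmetric group action. Acting by the transposition of $i$ and $j$ swaps $c_i$ with $c_j$ and flips the sign $s_{ij}$, so the disjunction is preserved. It follows that an operation $(\mu,\sigma;\mathbf{c};c)$ lies in $\mathcal{K}_m^\fivedots$ if and only if its representative with identity permutation lies in $\mathtt{PCG}$. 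Along the same lines we check that every morphism between such representatives stays inside $\mathtt{PCG}$; this holds because $\mathtt{PCG}$ is a full subcategory. With these checks the bijection of algebras for $\mathcal{K}_m^{\rotboxtimes}$ restricts to one for $\mathcal{K}_m^\fivedots$, and the axioms are verified as in the uncoloured case.
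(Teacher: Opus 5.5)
Your proposal is correct and is essentially the paper's argument. The paper simply says to rerun the proof of Proposition~\ref{propositionequivalencealgebras} while keeping track of colours, and your explicit check that the defining condition of $\mathcal{K}_m^\fivedots$ is $\Sigma_k$-invariant and reduces to the $\mathtt{PCG}$ condition at $\sigma=id$ is a detail the paper leaves implicit. That invariance is cleanest stated intrinsically: for each edge $\{a,b\}$ with $a$ before $b$ in the ordering $\sigma$, require $c_a\le(-1,\mu_{ab})$ or $c_b\le(1,\mu_{ab})$. This matters because a transposition of $i$ and $j$ also relabels the edges $ik$ and $jk$, not only $ij$.
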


\begin{proof}
	We can proceed as in the proof of Proposition \ref{propositionequivalencealgebras}.
\end{proof}

\subsection{Description of the classifier}

Let
\begin{equation}\label{equationmappolymon}
f: \mathcal{K}_m^\fivedots \to \mathcal{K}_m^{\rotboxtimes}
\end{equation}
be the morphism of polynomial $2$-monads given by inclusion functors.

%The induced double category is

%For $g: \{1,\ldots,k\} \to \{1,\ldots,l\}$ and $ij \in {l \choose 2}$, let $g^{-1}(ij)$ be the set of pairs $rs \in {k \choose 2}$ such that $g(r)=i$ and $g(s)=j$ or $g(r)=j$ and $g(s)=i$. 
For $\mu \in \mathtt{CCG}(k)$ and $S := \{l_1,\ldots,l_r\} \subset \{1,\ldots,k\}$, the restriction of $\mu$ to $S$ is defined as $\mu' \in \mathtt{CCG}(r)$ with $\mu'_{ij}:=\mu_{l_i l_j}$.

\begin{lemma}\label{lemmacomputationclassifier}
	The classifier associated to the morphism of polynomial $2$-monads \eqref{equationmappolymon} is the category whose objects are objects of $\mathtt{CCG}$ and there is a morphism $(\mu,c_1,\ldots,c_k,c) \to (\nu,d_1,\ldots,d_l,d)$ if $c=d$ and it is given by any function $g: \{1,\ldots,k\} \to \{1,\ldots,l\}$ together with morphisms $c_i \to d_{g(i)}$ in $\mathfrak{D}^m$ for $i=1,\ldots,k$ such that
	\begin{enumerate}
%		\item $c_i = d_{g(i)}$ for $i=1,\ldots,k$ and $c=d$,
		\item \label{conditionclassifier} for $ij \in {k \choose 2}$, if $g(i)<g(j)$, then $\mu_{ij}=\nu_{g(i)g(j)}$, and if $g(j)>g(i)$, then $\mu_{ij}<\nu_{g(j)g(i)}$,
		\item for $j=1,\ldots,l$, the restriction of $\mu$ to $g^{-1}(j)$ is in $\mathtt{PCG}$.
	\end{enumerate}
\end{lemma}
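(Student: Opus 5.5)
We apply Theorem \ref{theoremformulaclassifier} to $f$ and compute the crossed double category $X$ of \eqref{equationcldoublecat} explicitly, then identify $\pi_{0*}\mathrm{Cnr}(X)$.

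\textbf{Step 1: the three categories of $X$.} Since $\phi$ is the identity on $\mathrm{ob}(\mathfrak{D}^m)$, we have $X_0 = F_T 1$, the category $\mathtt{CCG}$ regarded over its target colour. So the objects of $X$ are tuples $(\mu,c_1,\ldots,c_k,c)$ and vertical morphisms are morphisms of $\mathtt{CCG}$, i.e.\ permutations $\sigma:\mu\to\nu$ compatible with colours. Next, $X_1 = F_T S1$: an object is an object $(\nu,d_1,\ldots,d_l,d)$ of $\mathtt{CCG}$ whose $j$-th input is decorated by an object of $S1=\mathtt{PCG}$ with target colour $d_j$. We read this as a horizontal arrow from the composite $(\mu,c_1,\ldots,c_k,d)=\nu(\mu^1,\ldots,\mu^l)$, given by \eqref{equationmultiplicationkm} and the source map $\mu_{\phi_!1}\cdot F_Tf_1$, to $(\nu,d_1,\ldots,d_l,d)$, given by the target map $F_T!$. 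Unwinding this, a horizontal arrow is an order-preserving surjection-type map $g$ (blocks placed consecutively) whose fibres carry $\mathtt{PCG}$ structures. The colours are related by the unary structure: the fibre $g^{-1}(j)$ has target colour $d_j$, so each $c_i\le d_{g(i)}$. Also $\mu_{ij}=\nu_{g(i)g(j)}$ across blocks, and the restriction to each block lies in $\mathtt{PCG}$. Squares are the morphisms of $X_1$, namely pairs of permutations, as in the $\mathbf{M}\to\mathbf{S}$ example. Finally, we check that $t=F_T!$ is a split opfibration, with lifts given by transporting the block structure along $\sigma$. By construction $i$ and $m$ preserve these lifts.

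\textbf{Step 2: corners and their components.} A corner $x\to y$ is a vertical $\sigma$ followed by a horizontal $g$. Composing gives a function $g\sigma:\{1,\ldots,k\}\to\{1,\ldots,l\}$ together with colour maps $c_i\to d_{g\sigma(i)}$. This is the analogue of $\Sigma\Delta_+$ arising as corners, where every map of finite sets factors uniquely as a permutation followed by an order-preserving map. We then verify conditions (1) and (2):
\begin{itemize}
\item Condition (2) holds because the fibres of $g\sigma$ are the $\sigma$-images of the blocks, and $\mathtt{PCG}$ is closed under the relevant relabelling.
\item Condition (1) follows by combining the morphism condition of $\mathtt{CG}$ for $\sigma$, namely that $\sigma(j)>\sigma(i)$ forces $\mu_{ij}<\mu'_{ij}$, with the equality $\mu'_{ab}=\nu_{g(a)g(b)}$ across blocks. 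Pairs not reversed give equality and reversed pairs give strict inequality. We read the second clause of (1) as $g(i)>g(j)$.
\end{itemize}
Conversely, given data $(g,\text{colour maps})$ satisfying (1) and (2), we construct a corner by choosing $\sigma$ to sort $g$ stably, so that the order within a fibre is kept. We check that $\sigma$ is a morphism of $\mathtt{CG}$, which uses the strict inequality in (1) for reversed pairs. The middle object is then determined.

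\textbf{Step 3 (main obstacle): $\pi_0$ of hom-categories.} We must show that two corners inducing the same function $g$ and the same colour maps are connected by $2$-cells. We must also show that $2$-cells preserve the function, so that $\pi_{0*}$ identifies corners exactly by this data, and that composition of corners, computed via the opcartesian lift, corresponds to composing functions.

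A $2$-cell $(\alpha,\beta)$ changes $\sigma$ by a permutation $\alpha$ and uses a square $\beta$ whose bottom edge is the identity. Such a square permutes elements within fibres, so it preserves $g\sigma$. For connectivity, we connect any corner to the stably sorted one by a $2$-cell whose $\alpha$ is the within-block reordering. This requires $\alpha$ to be a morphism of $\mathtt{CCG}$ within a block, and here the difficulty appears: $\mathtt{CG}$ morphisms only go upward in $\mu$. We expect to handle this by passing through a zigzag. If necessary we use the fact that within a fibre $\mu$ itself has no constraint beyond $\mathtt{PCG}$, so an intermediate corner exists whose middle object has the reordered block.

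Checking composition is routine bookkeeping with \eqref{equationmultiplicationkm}.
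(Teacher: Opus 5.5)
You follow the paper's route: Weber's formula, vertical arrows as morphisms of $\mathtt{CCG}$, horizontal arrows as order-preserving maps with blocks in $\mathtt{PCG}$, corners as pairs $(\sigma,g)$, and components indexed by $g\sigma$. But the first bullet of your Step 2 fails. You claim that (2) holds for $g\sigma$ because $\mathtt{PCG}$ is ``closed under the relevant relabelling''. A vertical arrow $\sigma\colon x\to a$ is not a weight-preserving relabelling. It must strictly raise the weight of every pair it inverts. By the order on $\mathcal{K}_m(k)$ it may also raise the other weights; compare the arrow raising $\mu_{12}$ from $1$ to $2$ in the example in the proof of Lemma \ref{lemmapointed}. $\mathtt{PCG}$ is closed \emph{upward} along such arrows, not downward, so knowing that the blocks of $a$ lie in $\mathtt{PCG}$ says nothing about the fibres of $x$.

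Here is a concrete counterexample. Take $m=2$, $x=(\mu_{12}=1;(1,1),(-1,1);(0,3))$, and let $y$ be the unary operation with $d_1=d=(0,3)$. The restriction of $x$ to $\{1,2\}$ is not in $\mathtt{PCG}$. Yet $(12)\colon x\to(\mu'_{12}=2;(-1,1),(1,1);(0,3))$ and $\mathrm{id}\colon x\to(\mu'_{12}=2;(1,1),(-1,1);(0,3))$ are vertical arrows into objects of $\mathtt{PCG}$. So both give corners $x\to y$ over the constant function, violating (2). The second corner also shows that non-inverted pairs only give $\le$ in (1), not equality.

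Step 3 then breaks exactly where you sensed trouble. Your intermediate-corner idea is correct when the fibres of the source lie in $\mathtt{PCG}$. In that case the stably sorted corner $(\sigma_0,g_0)$, whose middle object keeps $x$'s within-block weights and has $\nu$ across blocks, exists. It admits a direct $2$-cell to every corner $(\sigma,g)$ with $g\sigma=g_0\sigma_0$, because $\sigma\sigma_0^{-1}$ inverts exactly the within-block pairs that $\sigma$ inverts and is therefore a vertical arrow.

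In the example, that middle object would be $x$ itself, which is not in $\mathtt{PCG}$. The two corners above are the only ones. A $2$-cell between them would need $(12)$ to raise the weight $2=m$ strictly, so they lie in different components of $\mathrm{Cnr}(X)(x,y)$.

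The paper's proof does not supply what is missing. It asserts, by analogy with Weber's $\mathbf{M}\to\mathbf{S}$ example, that a $2$-cell exists iff $g_2\rho_2=g_1\rho_1$. That is, it treats $\rho_2\rho_1^{-1}$ as automatically vertical, which is precisely what fails once vertical arrows carry the weight constraints of $\mathtt{CG}$.

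Completed as above, your argument shows two things. First, each function satisfying (1) (with $\le$) and (2) gives exactly one morphism. Second, out of objects of $\mathtt{PCG}$ these are all the morphisms. For general sources in $\mathtt{CCG}$ there are further morphisms, sometimes several over the same function. So the statement itself would have to be amended; no amount of bookkeeping will verify it as written.
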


\begin{proof}
	The formula \eqref{equationcldoublecat} becomes
	\begin{equation}\label{equationdoublecat}
	\xymatrix{
		\mathcal{K}_m^{\rotboxtimes} \mathcal{K}_m^\fivedots (\mathtt{PCG}) \ar@<1.5ex>[r] \ar[r] \ar@<-1.5ex>[r] & \mathcal{K}_m^{\rotboxtimes} (\mathtt{PCG}) \ar@<1.5ex>[r] \ar@<-1.5ex>[r] & \mathtt{CCG} \ar[l]
	}
	\end{equation}
	The objects of the double category are the objects of $\mathtt{CCG}$. The vertical morphisms are the morphisms of $\mathtt{CCG}$. Let $\mathbb{C}$ be the category described in the statement of the lemma, which we want to prove is the classifier. The morphisms of $\mathtt{CCG}$ correspond to morphisms of $\mathbb{C}$ where $g$ is a bijection and the morphisms $c_i \to d_{g(i)}$ are the identities. The category $\mathcal{K}_m^{\rotboxtimes} (\mathtt{PCG})$ is the category whose objects are given by $\nu \in \{1,\ldots,m\}^{l \choose 2}$ together with objects $(\mu_1,\ldots,\mu_l)$ in $\mathtt{PCG}$, objects $(c_1,\ldots,c_l)$ in $\mathfrak{D}^m$ and $c \in \mathfrak{D}^m$ such that $\mu_i$ has target colour $c_i$ for $i=1,\ldots,l$. This is equivalent to giving a morphism in $\mathbb{C}$ where the function $g$ is order-preserving. Indeed, $(\mu_1,\ldots,\mu_l)$ correspond to the restrictions of $\mu$ to the fibres of $g$. As in \cite[Example 4.4.5]{weber}, a square of the double category, that is a morphism in $\mathcal{K}_m^{\rotboxtimes} (\mathtt{PCG})$, is completely determined by its boundary, and a square will exist if it forms a commutative diagram in $\mathbb{C}$. As in \cite[Example 5.2.4]{weber}, the opcartesian lifts are given by \emph{bijective-monotone} factorisation \cite[Remark 5.2.5]{weber} (see also \cite[Section 2]{bataninkockweber}). So, similarly to \cite[Example 5.3.7]{weber}, the $2$-category of corner of \eqref{equationdoublecat} is the following. The objects are objects of $\mathtt{CCG}$, the morphisms are pairs $(\rho,g)$ of morphisms in $\mathbb{C}$ where $\rho$ is a vertical morphism (given by a permutation) and $g$ is a horizontal morphism. A $2$-cell between $(\rho_1,g_1)$ and $(\rho_2,g_2)$ exists if and only if $g_2 \rho_2 \rho_1^{-1} = g_1$, that is if and only if $g_2 \rho_2 = g_1 \rho_1$.
\end{proof}

%\begin{lemma}
%	The codescent object $T^S$ is the category whose objects are given by $k \geq 0$, an element in $\{1,\ldots,m\}^{k \choose 2}$ and labels in $\mathfrak{D}^m$ for each $i = 1,\ldots,k$. Vertical morphisms are permutations $\sigma$ preserving labels and changing the order only if $\mu_{ij} < \nu_{ij}$. Horizontal morphisms are contractions of subgraphs, assuming all the edges between a vertex outside the subgraph and any vertex inside has the same weight and assuming the subgraph is properly labelled. Corners are given by contraction of subgraph $f:m \to n$ and permutations $\rho \in \Sigma_m$. Two corners $(f_1,\rho_1)$ and $(f_2,\rho_2)$ are equivalent if $f_1 \rho_1 = f_2 \rho_2$.
%\end{lemma}
%
%\begin{lemma}
%	The morphisms in the category $T^S$ are given by functions $k \to l$ in the category of finite sets, which preserve the labels and the weights.
%\end{lemma}

\subsection{Statement and proof of the cofinality result}

\begin{theorem}\label{theoremcofinality}
	The morphism of polynomial $2$-monads \eqref{equationmappolymon} is homotopically cofinal.% at $\mathcal{K}_m$.
	%	\[
	%		f: \mathcal{K}_m^\fivedots \to \mathcal{K}_m^{\rotboxtimes}
	%	\]
	%	be the inclusion map of operads. Let
	%	\[
	%		\xymatrix{
	%			\mathcal{K}_m^\fivedots\text{-}\mathrm{Alg}(M) \ar[r] & \mathcal{K}_m^{\rotboxtimes}\text{-}\mathrm{Alg}_{}(M)
	%		}
	%	\]
	%	be the Quillen adjunction induced by $f$. Then there is a weak equivalence of $\mathcal{K}_m^{\rotboxtimes}$-algebras in $M$
	%	\[
	%		Lf_! (\mathcal{K}_m) \sim \mathcal{K}_m,
	%	\]
	%	where $Lf_!$ is the left derived functor of $f_!$.
\end{theorem}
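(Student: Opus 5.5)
By the definition of homotopical cofinality, it suffices to show that the nerve of the classifier $\mathbb{C}$ described in Lemma \ref{lemmacomputationclassifier} is contractible. Since morphisms of $\mathbb{C}$ preserve the target colour $c$, $\mathbb{C}$ splits as a disjoint union $\coprod_{c \in \mathfrak{D}^m} \mathbb{C}_c$. We therefore have to prove that each $\mathbb{C}_c$ is contractible, and, if the classifier is meant to be taken over a fixed target colour, do this componentwise. I would work with a fixed $c$ throughout.

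The plan is to find a terminal or initial-type object in each $\mathbb{C}_c$, or failing that, a zig-zag of natural transformations to a constant functor. The natural candidate is the one-vertex object $u_c = (\emptyset, c, c) \in \mathtt{CCG}(1)$ with source colour $c$. For any object $x = (\mu, c_1, \ldots, c_k, c)$ there is the map $g \colon \{1, \ldots, k\} \to \{1\}$ together with the morphisms $c_i \to c$. Condition (1) is vacuous because all of $\{1, \ldots, k\}$ lands in a single fibre. Condition (2) asks that $\mu$ itself lie in $\mathtt{PCG}$, which is not true in general. So $u_c$ is terminal only for the full subcategory of $\mathtt{PCG}$-objects, and the real content is to deform a general object $x$ into $\mathtt{PCG}$.

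The second step is a Quillen Theorem A style argument. Let $\mathbb{P} \subset \mathbb{C}_c$ be the full subcategory of objects lying in $\mathtt{PCG}$. It has the terminal object $u_c$: uniqueness holds because the function to a point is unique, and so are the morphisms in the poset $\mathfrak{D}^m$. Hence $N\mathbb{P}$ is contractible. I would then show that the inclusion $\mathbb{P} \hookrightarrow \mathbb{C}_c$ is homotopy cofinal, i.e.\ that each comma category $x \downarrow \mathbb{P}$ is contractible. An object of $x \downarrow \mathbb{P}$ is a "$\mathtt{PCG}$-compatible coarsening" of $x$: a map $g$ whose fibres restrict into $\mathtt{PCG}$, whose target is in $\mathtt{PCG}$, and which satisfies the weight condition (1). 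To prove contractibility, I would try to construct a canonical finest such coarsening $x \to x^\flat$ and show it is initial in $x \downarrow \mathbb{P}$.

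The natural choice for $x^\flat$ is to relabel every source colour to be the target colour $c$ while keeping $g = \mathrm{id}$. The condition $c_i \leq (-1,\mu_{ij})$ or $c_j \leq (1,\mu_{ij})$ then fails unless $c$ is small, so this choice is wrong as stated. Instead I would filter by the level $l$ of $c$, mimicking the retractions $r_t$ of Lemma \ref{lemmaequivalenceoperads}. For $t = 1, \ldots, l$, define full subcategories $\mathbb{C}^{(t)}$ of objects whose pairs with $\mu_{ij} < t$ satisfy the $\mathtt{PCG}$ condition. Then construct functors $R_t \colon \mathbb{C}^{(t)} \to \mathbb{C}^{(t+1)}$ that merge the offending vertices at level $t$ (those $i,j$ with $\mu_{ij} = t$ and $c_i > (-1,t)$, $c_j > (1,t)$) and recolour them upward, together with natural transformations $\mathrm{id} \Rightarrow i_t R_t$. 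Each such transformation gives a homotopy, so the $\mathbb{C}^{(t+1)} \subset \mathbb{C}^{(t)}$ are homotopy equivalent, with $\mathbb{C}^{(1)} = \mathbb{C}_c$ and the last stage being $\mathbb{P}$ up to this equivalence.

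The main obstacle is making these merges well defined and functorial. Merging requires a block of vertices on which $\mu$ has constant weight towards every outside vertex, which is what condition (1) demands, and it requires the merged block itself to lie in $\mathtt{PCG}$. I expect to handle this by merging, at each level $t$, whole equivalence classes of vertices under the relation "not separated before coordinate $t$". This relation is what the complete-graph cellular structure suggests, and it should make the required weights constant automatically.
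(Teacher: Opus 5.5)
Your reduction to a fixed target colour $c$ is correct, and so is the observation that $u_c$ is terminal in $\mathbb{P}$. The step that fails is the one carrying all the weight: that $\mathbb{P}\hookrightarrow\mathbb{C}_c$ is a homotopy equivalence because every $x$ can be pushed into $\mathtt{PCG}$ along a morphism $x\to x^\flat$, or along components of $\mathrm{id}\Rightarrow i_tR_t$.

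In the classifier of Lemma \ref{lemmacomputationclassifier}, a morphism $x\to y$ can only raise colours ($c_i\le d_{g(i)}$). Condition (2) also requires every fibre of $g$ to lie in $\mathtt{PCG}$ \emph{with the original colours} $c_i$. So merging an offending pair is exactly what is forbidden. Since the $\mathtt{PCG}$ condition is downward closed in the colours, recolouring upward can only destroy it. The only way a morphism can repair a bad pair is to send its endpoints to different vertices in reversed order with strictly larger weight, and this is often impossible.

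Concretely, take $c=t$, $k=2$, $c_1=c_2=t$. Any morphism $x\to p$ forces $d_{g(1)}=d_{g(2)}=t$. If $g(1)=g(2)$, the fibre violates $\mathtt{PCG}$. Otherwise $p$ has two vertices coloured $t$, which violates $\mathtt{PCG}$ because $t\not\le(\pm1,w)$ for every $w\le m$. Hence $x\downarrow\mathbb{P}=\emptyset$, and neither $x^\flat$ nor $R_t$ can exist, with or without your equivalence-class merging.

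The paper never tries to move objects into $\mathtt{PCG}$. It uses the colour-forgetting functor $F\colon\mathcal{X}\to\mathcal{Y}=p^*\bigl((\mathcal{K}_m)^{\mathcal{K}_m}\bigr)$ induced by the square \eqref{equationsquare}.
\begin{itemize}
\item $\mathcal{Y}$ has a terminal object.
\item Each fibre $F_y$ has the terminal colouring $c_i=c$. Fibre morphisms have $g=\mathrm{id}$, so condition (2) is vacuous there and non-$\mathtt{PCG}$ colourings cause no trouble.
\item $F$ is smooth by Cisinski's criterion. This reduces to contractibility of the categories $\chi(d,y)$ of colourings $c_i\le d_i$ satisfying the $\mathtt{PCG}$ constraints indexed by $S$, which the paper proves by induction on $|S|$ via Quillen's Theorem A.
\end{itemize}
Thus every $y/F$ is contractible and $F$ is a weak equivalence.

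Your framework also has a much cheaper repair, which you missed by looking only for a terminal object. With the classifier as described, the nullary object $(\ast,c)\in\mathtt{CCG}(0)$ is initial in $\mathbb{C}_c$: the empty function $g$ satisfies (1) and (2) vacuously and is unique. So $N(\mathbb{C}_c)$ is contractible at once, which is all that the definition of homotopical cofinality asks. The paper's longer argument buys the stronger fact that each $y/F$ is contractible, and that is what Lemma \ref{lemmaquillenreflection} actually uses later.
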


	We will proceed as in \cite[Section 10]{batanindeleger}. We have the commutative square of polynomial $2$-monads%categorical operads
	\begin{equation}\label{equationsquare}
	\xymatrix{
		\mathcal{K}_m^\fivedots \ar[r]^{pf} \ar[d]_f & \mathcal{K}_m \ar@{=}[d] \\
		\mathcal{K}_m^{\rotboxtimes} \ar[r]_p & \mathcal{K}_m
	}
	\end{equation}
	%	\[
	%		\xymatrix{
	%			S \ar[r]^{pf} \ar[d]_f & T_0 \ar@{=}[d] \\
	%			T \ar[r]_p & T_0
	%		}
	%	\]
	According to \cite[Proposition 4.7]{batanindeleger}, this induces a functor
	\begin{equation}\label{equationsmoothfunctor}
	(\mathcal{K}_m^{\rotboxtimes})^{\mathcal{K}_m^\fivedots} \to p^*\left((\mathcal{K}_m)^{\mathcal{K}_m}\right).
	\end{equation}
	Recall \cite[Section 5.3.1]{cisinski} that a functor $F$ is \emph{smooth} if for all $y \in \mathcal{Y}$, the canonical inclusion
	\begin{equation}\label{equationcanonicalinclusion}
		F_y \to y/F,
	\end{equation}
	where $F_y$ is the fibre over $y$, induces a weak equivalence between nerves.
	
	\begin{lemma}\label{lemmasmooth}
		The functor \eqref{equationsmoothfunctor} is smooth.
	\end{lemma}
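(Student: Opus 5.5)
We first make the functor \eqref{equationsmoothfunctor} explicit, and then compare each fibre with the corresponding comma category.

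\emph{The target.} By Lemma \ref{lemmacomputationclassifier}, applied with both monads equal to $\mathcal{K}_m$ (no colours, no $\mathtt{PCG}$ condition), the classifier $(\mathcal{K}_m)^{\mathcal{K}_m}$ has objects $\mu \in \mathtt{CG}(k)$. Its morphisms $\mu \to \nu$ are functions $g$ satisfying condition \eqref{conditionclassifier}. Pulling back along $p$ replaces the single colour by the colour set $\mathrm{ob}(\mathfrak{D}^m)$. So $p^*((\mathcal{K}_m)^{\mathcal{K}_m})$ has objects $(\nu,d_1,\ldots,d_l,d)$ with arbitrary colours, and its morphisms are functions $g$ satisfying \eqref{conditionclassifier}, with target colour preserved and source colours unconstrained.

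\emph{The functor.} The functor $F$ of \eqref{equationsmoothfunctor} takes an object $(\mu,c_1,\ldots,c_k,c)$ to itself. On morphisms it forgets the maps $c_i \to d_{g(i)}$ in $\mathfrak{D}^m$ and forgets condition (2). Since $\mathfrak{D}^m$ is a poset, such a map exists at most once. A morphism $g$ of the target therefore lifts, uniquely if at all, precisely when $c_i \leq d_{g(i)}$ for all $i$ and each restriction $\mu|_{g^{-1}(j)}$ is in $\mathtt{PCG}$.

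\emph{The comparison.} Fix $y=(\nu,d_1,\ldots,d_l,d)$. The fibre $F_y$ consists of objects $(\nu,d_1,\ldots,d_l,d)$ carrying arbitrary colours, namely $y$ itself, together with morphisms $g\colon l\to l$ that are identities on the underlying graph data, i.e. bijections $\sigma$ with $\nu \to \nu$. The comma category $y/F$ consists of pairs $(x,\ g\colon y \to F x)$. I would show that $F_y \to y/F$ is a homotopy equivalence by constructing a right or left adjoint retraction. The guiding idea is that any morphism $g\colon y \to Fx$ in the target factors canonically, as in the bijective-monotone factorisation used in the proof of Lemma \ref{lemmacomputationclassifier}, through an object lying over $y$. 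The intended adjoint sends $(x,g)$ to a canonical object of $F_y$, with the universal arrow supplied by the lift of the factor of $g$; this must be done without changing colours in a way that breaks condition (2). The key observation is that singleton fibres $g^{-1}(j)$ always satisfy the $\mathtt{PCG}$ condition vacuously. I would therefore compare $y/F$ with the full subcategory on pairs whose $g$ is bijective.

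\emph{Main obstacle.} The hard part is verifying that this adjunction is well defined: the $\mathtt{PCG}$ constraint on fibres must be compatible with composition. An alternative is to show that each fibre $F_y$ and each $y/F$ has a terminal object, or is a cofiltered poset, and hence is contractible. I would first try the approach via terminal objects, taking for $y/F$ the object $(y,\mathrm{id})$ with colours adjusted, and check that condition (2) for composites holds by restricting to fibres.
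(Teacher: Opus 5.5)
There is a genuine gap. First, the functor is misidentified. The category $\mathcal{Y}=p^*\bigl((\mathcal{K}_m)^{\mathcal{K}_m}\bigr)$ is a copy of $(\mathcal{K}_m)^{\mathcal{K}_m}$ for each target colour. Its objects are pairs $(\nu,d)$ with $\nu\in\mathtt{CG}(l)$, with no source colours, and $F$ forgets $c_1,\ldots,c_k$. Your lifting criterion ($c_i\le d_{g(i)}$, plus the $\mathtt{PCG}$ condition on each fibre of $g$) is right. However, the fibre $F_y$ is not ``$y$ itself'' with bijections $\nu\to\nu$; a bijection $\sigma$ lies over $\mathrm{id}_y$ only if $\sigma=\mathrm{id}$. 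The fibre is the poset of all colourings $(c_1,\ldots,c_k)$ with $c_i\le c$, ordered pointwise, because over $\mathrm{id}_y$ all fibres of $g$ are singletons. This poset has the terminal object $c_i=c$. So smoothness of $F$ is equivalent to contractibility of every comma category $y/F$. That is the entire content of the lemma, and it is exactly what your ``main obstacle'' leaves open.

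Second, neither route you suggest can close it. Morphisms $(x_0,\mathrm{id}_y)\to(x,g)$ in $y/F$ are lifts of $g$. A right adjoint to $F_y\to y/F$ would therefore need the down-set of colourings of $y$ from which $g$ lifts to have a maximum. On each non-singleton fibre of $g$, however, this down-set is cut out by disjunctions: $c_i\le(-1,\nu_{ij})$ or $c_j\le(1,\nu_{ij})$. Take $\nu\in\mathtt{CG}(2)$ with $\nu_{12}=1$ and $g$ the map onto a one-vertex object coloured $t$. The admissible colourings then have two maximal elements, $((-1,1),t)$ and $(t,(1,1))$, and no maximum. A left adjoint fails, and so does your candidate terminal object $(x_0,\mathrm{id}_y)$, because a morphism $(x,g)\to(x_0,\mathrm{id}_y)$ exists only if $g$ has a retraction. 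Your remark that singleton fibres satisfy $\mathtt{PCG}$ vacuously only handles injective $g$, while all the difficulty sits with non-injective $g$.

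What is missing is a genuine contractibility argument for these disjunctive posets. The paper uses \cite[Proposition 5.3.4]{cisinski} to reduce smoothness to contractibility of the lifting categories $\mathcal{X}(x_1,g_1)$. These split as products, over the fibres of $g_1$, of posets $\chi(d_1,y)$ of colourings $c_i\le d_1$ satisfying the $\mathtt{PCG}$ disjunctions. It proves these contractible by induction on the number of constrained pairs. For a chosen pair $ij$, Quillen's Theorem A compares $\chi(d,y)$ with $\chi(d',y)$, where $d'_i=(-1,\min(l_i,y_{ij}))$ for $d_i=(\epsilon_i,l_i)$. In $\chi(d',y)$ the constraint on $ij$ holds automatically, so one constraint is removed at each step.
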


	\begin{proof}
		To simplify the notations, let us denote the functor \eqref{equationsmoothfunctor} by $F: \mathcal{X} \to \mathcal{Y}$. For $g_1: y \to y_1$ in $\mathcal{Y}$ and $x_1 \in \mathcal{X}$ such that $F(x_1) = y_1$, let $\mathcal{X}(x_1,g_1)$ be the category whose objects are morphisms $g: x \to x_1$ such that $F(g)=g_1$ and a morphism from $g:x \to x_1$ to $g':x' \to x_1$ is given by $h: x \to x'$ in $F_y$ such that $g'h=g$. According to \cite[Proposition 5.3.4]{cisinski}, $F$ is smooth if $\mathcal{X}(x_1,g_1)$ has contractible nerve for all $x_1$ and $g_1$.
		
		First let us describe $F$ explicitly. Note that the category $\mathcal{Y}$ admits a description similar to the description of $\mathcal{X}$ given in Lemma \ref{lemmacomputationclassifier}. The objects are elements of $\mathtt{CG}$, so they are given by $k \geq 0$ and $\mu \in \{1,\ldots,m\}^{k \choose 2}$. The morphisms $\mu \to \nu$ are given by functions $g: \{1,\ldots,k\} \to \{1,\ldots,l\}$ such that for $ij \in {k \choose 2}$, if $g(i) < g(j)$, then $\mu_{ij}=\nu_{g(i)g(j)}$, and if $g(j) > g(i)$, then $\mu_{ij}<\nu_{g(j)g(i)}$. Note that there is a terminal object given by $k=1$. The functor $F$ just forgets the colours $c_i \in \mathfrak{D}^m$ for $i=1,\ldots,k$ and $c \in \mathfrak{D}^m$.
		
		Let us now prove that $\mathcal{X}(x_1,g_1)$ has contractible nerve for all $x_1$ and $g_1$ as above. The objects of this category are given by $x \in F_y$ together with $g: x \to x_1$ such that $F(g)=g_1$. Note that $g$ is entirely determined by $g_1$ and is given by a function $\{1,\ldots,k\} \to \{1,\ldots,l\}$, so $\mathcal{X}(x_1,g_1)$ can be seen as a subcategory of $F_y$. Moreover, objects of $F_y$ are given by the object $y \in \mathcal{Y}$, that is $y \in \mathtt{CG}(k)$ for some $k \geq 0$, together with a colouring of $y$, that is $c_i \in \mathfrak{D}^m$ for $i=1,\ldots,k$ and $c \in \mathfrak{D}^m$. So $\mathcal{X}(x_1,g_1)$ is the category whose objects $x$ are given by colourings of $y$ such that for $j=1,\ldots,l$, the restriction of $x$ to the $j$-th fibre of $g$ gives an element of $\mathtt{PCG}$. We can therefore assume without loss of generality that $y_1$ is the terminal object. Indeed, the category $\mathcal{X}(x_1,g_1)$ in the general case will be isomorphic to the product over the fibres of $g_1$ of $\mathcal{X}(x_1^j,g_1^j)$, where $x_1^j$ and $g_1^j$ are the restrictions of $x_1$ and $g_1$ to the $j$-th fibre of $g_1$, so $g_1^j$ is just the unique map to the terminal object. When $y_1$ is the terminal object, $x_1$ is given by $d_1 \in \mathfrak{D}^m$ and $d \in \mathfrak{D}^m$. $\mathcal{X}(x_1,g_1)$ is the category whose objects are given by $c_i \in \mathfrak{D}^m$ such that $c_i \leq d_1$ for $i=1,\ldots,k$ and the obtained colouring of $y$ is an element of $\mathtt{PCG}$. The morphisms are just given by morphisms $c_i \to c'_i$ in $\mathfrak{D}^m$ for $i=1,\ldots,k$.
		
		Since $\mathcal{X}(x_1,g_1)$ depends on $y \in \mathtt{CG}(k)$ and $d_1 \in \mathfrak{D}^m$, let $\chi(d_1,y) := \mathcal{X}(x_1,g_1)$ (using the notation in the proof of \cite[Lemma 3.16]{deleger}). Let us now prove that the nerve of $\chi(d_1,y)$ is contractible. We will prove a more general statement. Let $S \subset {k \choose 2}$, $y \in \{1,\ldots,m\}^S$ and $d:=(d_1,\ldots,d_k)$, where $d_i=(\epsilon_i,l_i) \in \mathfrak{D}^m$ for $i=1,\ldots,k$. Let us assume that for all $ij \in S$, $\epsilon_i \leq \epsilon_j$. Let $\chi(d,y)$ be the category whose objects are given by $c_i \in \mathfrak{D}^m$ such that $c_i \leq d_i$ for $i=1,\ldots,k$ and for $ij \in S$, $c_i \leq (-1,y_{ij})$ or $c_j \leq (1,y_{ij})$. As above, the morphisms are given by morphisms $c_i \to c'_i$ in $\mathfrak{D}^m$ for $i=1,\ldots,k$. We will prove that $\chi(d,y)$ has contractible nerve.
		
		Let us proceed by induction on the number of elements in $S$. The case where $S$ is empty is trivial. If $S$ is non-empty, let $ij \in S$. %We can assume without loss of generality that if $\epsilon_i=\epsilon_j=-1$, then $j$ is maximal with this property, that is for all $i'j' \in S$ with $\epsilon_{i'}=\epsilon_{j'}=-1$, $j' \leq j$. Similarly, we can assume that if $\epsilon_i=\epsilon_j=1$, then $i$ is minimal with this property. 
		Let us assume that $\epsilon_i \neq 1$, otherwise $\epsilon_j \neq -1$ and we can apply a similar argument. Let $d'_r := d_r$ for $r=1,\ldots,k$ with $r \neq i$ and let $d'_i := (-1,\lambda)$, where $\lambda=\min(l_i,y_{ij})$. %Let $S'$ be the set $S$ without $ij$ and $y'$ the restriction of $y$ to $S'$.
		There is a functor $\alpha: \chi(d,y) \to \chi(d',y)$ which replaces $c_i$ by $d'_i$ if $c_i \nless d'_i$. Let us prove that $\alpha$ induces a weak equivalence between nerves. For $c' \in \chi(d',y)$, the fibre category $\alpha_{c'}$ is trivial if $c'_i \neq d'_i$. If $c'_i = d'_i$, it is equivalent to the category whose objects are $c_i \in \mathfrak{D}^m$ such that $c_i \leq d_i$ but $c_i \nless d'_i$. This last category has a terminal object given by $d_i$. Moreover the canonical inclusion $\iota: \alpha_{c'} \to c'/\alpha$ has a right adjoint. This right adjoint sends $c' \to \alpha(c)$ to $c''$ given by $c''_r := c'_r$ for $r=1,\ldots,k$ with $r \neq i$ and $c''_i := c'_i$ if $c'_i < d'_i$ and $c''_i := c_i$ otherwise. The unit is the identity, the counit is the unique map. So $\iota$ induces a weak equivalence between nerves, which means that $c'/\alpha$ has a contractible nerve for all $c'$. We deduce, using Quillen's Theorem A, that $\alpha$ induces a weak equivalence between nerves. Note that the category $\chi(d',y)$ is equivalent to $\chi(d',y')$, where $y'$ is the restriction of $y$ to $S'$, and $S'$ is the set $S$ minus $ij$. By induction the nerve $\chi(d',y')$ is contractible, so the nerve of $\chi(d,y)$ is also contractible. This concludes the proof.
%		Let $\chi_i(d,y)$ be the full subcategory of $\chi(d,y)$ of objects such that $c_i \leq (-1,y_{ij})$. Similarly, let $\chi_j(d,y)$ be the full subcategory of $\chi(d,y)$ of objects such that $c_j \leq (1,y_{ij})$. We want to prove that $\chi_i(d,y)$, $\chi_j(d,y)$ and $\chi_i(d,y) \cap \chi_j(d,y)$ have contractible nerve.
%		
%		
		%
		%In conclusion, we need to prove that, given $y \in \mathtt{CG}(k)$ and $d_1 \in \mathfrak{D}^m$,
	\end{proof}
	
\begin{proof}[Proof of Theorem \ref{theoremcofinality}]
	We want to prove that $\mathcal{X}$ has a contractible nerve. As it was pointed out in the proof of Lemma \ref{lemmasmooth}, $\mathcal{Y}$ has a contractible nerve since it has a terminal object. It remains to prove that $F$ induces a weak equivalence between nerves.
	
	Let us prove that for all $y \in \mathcal{Y}$, $F_y$ has a contractible nerve. To be precise, $\mathcal{Y}$ is a collection of categories indexed by the objects of $\mathfrak{D}^m$. Let $y \in \mathcal{Y}$ indexed by $c \in \mathfrak{D}^m$. If $y \in \mathtt{CG}(k)$, the fibre category $F_y$ is the category whose objects are given by $c_i \in \mathfrak{D}^m$ such that $c_i \leq c$ for $i=1,\ldots,k$. The morphisms are given by morphims $c_i \to c_{i'}$ in $\mathfrak{D}^m$ for $i=1,\ldots,k$. This category has an obvious terminal object given by $c_i=c$ for all $i$. So $F_y$ indeed has a contractible nerve.
	
	According to Lemma \ref{lemmasmooth}, $F$ is smooth, which means by definition that the functor \eqref{equationcanonicalinclusion} induces a weak equivalence between nerve. So $y/F$ has a contractible nerve for all $y \in \mathcal{Y}$. The conclusion that $F$ induces a weak equivalence between nerves follows from Quillen's Theorem A.
%	It remains to prove that $F$ is a weak equivalence. Recall \cite[Section 5.3.1]{cisinski} that functor $F$ is \emph{smooth} if for all $y \in \mathcal{Y}$, the canonical inclusion
%	\[
%		F_y \to y/F,
%	\]
%	where $F_y$ is the fibre over $y$, is a weak equivalence. According to \cite[Proposition 5.3.4]{cisinski}, $F$ is smooth if for all $f: y_0 \to y_1$ in $\mathcal{Y}$ and $x_1 \in \mathcal{X}$ such that $F(x_1) = y_1$, the \emph{lifting category} whose objects are 
%	
%	The right side is contractible. The fibres are contractible. Indeed, they are given by the product of $P$. Now let us prove that the functor is smooth. The lifting categories are given by $\mathcal{C}(x)$. This concludes the proof. $\mathcal{Y}$ has a contractible nerve since it has a terminal object.
\end{proof}

\section{Delooping of mapping spaces}\label{sectiondelooping}

%\subsection{Bimodules as algebras in the category of right modules}
%
%\begin{definition}
%	For an operad $\mathcal{P}$, let $\mathcal{P}\text{-}\mathrm{RMod}$ be the $2$-category of right $\mathcal{P}$-modules. It is a symmetric monoidal $2$-category \cite[Section 6.1]{fressebook} with monoidal product given by
%	\[
%	(A \otimes B)_n = \coprod_{p+q=n} (A(p) \times B(q))_{\Sigma_p \times \Sigma_q},
%	\]
%	and unit $I = (1,0,0,\ldots)$.
%\end{definition}
%
%Note that algebras over $\mathcal{K}_m$ in $\mathcal{K}_m\text{-}\mathrm{RMod}$ are equivalent to $\mathcal{K}_m$-bimodules \cite[Proposition 9.1.2]{fressebook}. In particular, since $\mathcal{K}_m$ is a bimodule over itself, it is also an algebra over itself in $\mathcal{K}_m\text{-}\mathrm{RMod}$. Abusing notation, we will write $\mathcal{K}_m$ for the restriction of $\mathcal{K}_m$ through the composite
%\begin{equation*}%\label{equationcomposite}
%	\mathcal{K}_m^\fivedots \xrightarrow{f} \mathcal{K}_m^{\rotboxtimes} \xrightarrow{p} \mathcal{K}_m,
%\end{equation*}
%where $f$ in the inclusion and $p$ is the projection.

\subsection{Left properness}% for $\mathcal{K}_m$-bimodules}

\begin{lemma}\label{lemmaquasitame}
	The polynomial $2$-monad $\mathcal{K}_m$ is quasi-tame.
\end{lemma}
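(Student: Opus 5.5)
The plan is to compute the classifier $\mathcal{K}_m^{\mathcal{K}_m+1}$ explicitly with Weber's formula (Theorem \ref{theoremformulaclassifier}), as in Lemma \ref{lemmacomputationclassifier}. Then I would compare it, through a functor modelled on \eqref{equationsmoothfunctor}, with the classifier $\mathcal{K}_m^{\mathcal{K}_m}$, which has a terminal object. The aim is to show that every connected component of $\mathcal{K}_m^{\mathcal{K}_m+1}$ has contractible nerve, which in particular makes its fundamental groupoid equivalent to a discrete one.

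\emph{Step 1: describe the classifier.} The monad $\mathcal{K}_m+1$ is a polynomial $2$-monad on $\mathrm{Cat}/\{b,w\}$. The colour $b$ carries the operations of $\mathcal{K}_m$, and the colour $w$ carries only identities. Running the bar construction \eqref{equationcldoublecat} exactly as in the proof of Lemma \ref{lemmacomputationclassifier} should give the following category.
\begin{itemize}
\item Objects: objects $\mu \in \mathtt{CG}(k)$ whose vertices are each marked $b$ or $w$.
\item Morphisms $\mu \to \nu$: functions $g: \{1,\ldots,k\} \to \{1,\ldots,l\}$ satisfying the weight condition \eqref{conditionclassifier}, where
\begin{itemize}
\item if $g(i)<g(j)$ then $\mu_{ij}=\nu_{g(i)g(j)}$, and if $g(i)>g(j)$ then $\mu_{ij}<\nu_{g(j)g(i)}$;
\item $g$ preserves the marking;
\item every fibre of $g$ over a $w$-vertex is a single $w$-vertex.
\end{itemize}
\end{itemize}
Fibres over $b$-vertices may be arbitrary sets of $b$-vertices. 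As before, the opcartesian lifts come from the bijective--monotone factorisation, and $2$-cells identify corners $(\rho_1,g_1)$ and $(\rho_2,g_2)$ with $g_1\rho_1=g_2\rho_2$. Applying $\pi_{0*}$ then yields the description above.

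\emph{Step 2: compare with $\mathcal{K}_m^{\mathcal{K}_m}$.} The projection $\mathcal{K}_m+1 \to \mathcal{K}_m$ induces a functor $F: \mathcal{K}_m^{\mathcal{K}_m+1} \to \mathcal{K}_m^{\mathcal{K}_m}$ that forgets the marking. This is the analogue of \eqref{equationsmoothfunctor}, obtained from \cite[Proposition 4.7]{batanindeleger}. The target $\mathcal{Y}=\mathcal{K}_m^{\mathcal{K}_m}$ has a terminal object ($k=1$), as noted in the proof of Lemma \ref{lemmasmooth}, so its nerve is contractible. The fibre $F_y$ over $y \in \mathtt{CG}(k)$ is the discrete set of markings of the $k$ vertices, because the maps in a fibre are identities on markings. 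I would then use the lifting-category criterion \cite[Proposition 5.3.4]{cisinski}, as in Lemma \ref{lemmasmooth}. For $g_1: y \to y_1$ and a marked $x_1$ over $y_1$, the lifting category $\mathcal{X}(x_1,g_1)$ consists of markings of $y$ compatible with $g_1$. A fibre of $g_1$ over a $b$-vertex must be all $b$, and a fibre over a $w$-vertex must be a single $w$-vertex or else admits no lift. So each lifting category is either a point or empty.

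\emph{Main obstacle.} The empty case is the difficulty. Whenever $g_1$ collapses several vertices onto a $w$-vertex, no lift exists, so $F$ is not smooth on the nose. I would instead use the dual notion, properness (cofibres $y/F$ versus fibres), or run Quillen's Theorem A one connected component at a time. Concretely, I would split $\mathcal{X}$ according to the number $n$ of $w$-vertices, which morphisms preserve. Within a component, I would try to show that each object maps to a normal form in which every maximal block of $b$-vertices is contracted to a single $b$-vertex. The point to check is that a compatible weight assignment $\nu$ always exists, using the strict-inequality clause of \eqref{conditionclassifier} to reorder a contracted $b$-vertex past a $w$-vertex. If this works, the resulting normal forms should form a subcategory with terminal objects in each component. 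If the weights obstruct such a terminal object, the fallback is the weaker conclusion that suffices for quasi-tameness: show that each component is simply connected, by exhibiting the normal-form subcategory as a reflective subcategory whose components are posets with a top element.
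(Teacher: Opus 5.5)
\textbf{There is a genuine gap, and a better normal form cannot close it.} The property you are aiming for (contractible, or even simply connected, components) fails for $m\geq 2$ under your own Step~1 description.

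\emph{The component is not simply connected.} Fix $n$ and let $\mathcal{X}_n$ be the component of objects with exactly $n$ vertices marked $w$. Between all-$w$ objects every morphism is a bijection, so the full subcategory they span is $\mathtt{CG}(n)$ itself. Restricting $\mu$ to its $w$-vertices defines a functor $R:\mathcal{X}_n\to\mathtt{CG}(n)$, and $R$ is right adjoint to the inclusion. The counit at $x$ is the order-preserving inclusion of the $w$-vertices of $x$, with empty fibres over the $b$-vertices; this is allowed because $\mathtt{CG}(0)$ is a point. Every morphism from an all-$w$ object into $x$ factors uniquely through this counit. Hence
$N\mathcal{X}_n\simeq N\mathtt{CG}(n)\simeq B\mathcal{K}_m(n)/\Sigma_n\simeq\mathcal{C}_m(n)/\Sigma_n$,
an unordered configuration space.

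\emph{Explicit example.} For $m=n=2$, $\mathtt{CG}(2)$ has objects $1,2$ and two parallel arrows $1\to 2$: the identity and the transposition. They come from $(1,\mathrm{id})\leq(2,\mathrm{id})$ and $(1,\mathrm{id})\leq(2,(12))$ in $\mathcal{K}_2(2)$, so the nerve is a circle. For this you need the condition for $g(i)<g(j)$ to read $\mu_{ij}\leq\nu_{g(i)g(j)}$, not the equality you copied. The classifier contains the vertical morphisms of $\mathtt{CG}$, and the equality version is not even closed under composition.

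\emph{Consistency check.} This agrees with $N(T^{T+1})\simeq\mathbb{L}f_!(1)$. The initial $\mathcal{K}_m$-algebra is a point, so $\mathbb{L}f_!(1)$ is the free algebra on one generator, whose nerve is $\coprod_n N\mathtt{CG}(n)$.

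\emph{Consequences for your plan.}
Contracting blocks of $b$-vertices never touches the obstruction, which lives entirely on the $w$-vertices.
$F$ restricted to $\mathcal{X}_n$ cannot be a weak equivalence onto the contractible $\mathcal{Y}$.
The simply connected fallback also fails: for $n\geq 2$, $\pi_1$ is the braid group when $m=2$ and $\Sigma_n$ when $m\geq 3$.
Only for $m=1$, where $\mathtt{CG}(n)$ is a point, does your plan go through.

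\textbf{Comparison with the paper.} The paper's proof reaches the opposite conclusion only because its description of the classifier also requires $g$ to be injective and \emph{order-preserving} on the $K$-coloured vertices (your $w$-vertices). That removes the transpositions above, so the all-$K$ objects form a poset and an all-$K$ object becomes initial in its component. Your Step~1 has no such restriction, and I do not see where the bar construction would produce one. The vertical category $F_T\phi_!1=\mathcal{K}_m(\{b,w\})$ visibly contains the transposition of two $w$-inputs whenever the weight increases. To follow the paper's route you would have to justify that order-preservation. With the description you actually wrote down, your computation instead shows that $\mathcal{K}_m$ is not quasi-tame for $m\geq 2$.
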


\begin{proof}
	We need to compute the classifier $T^{T+1}$ when $T=\mathcal{K}_m$. Using the description given in \cite[Section 6.20]{bataninberger}, we can prove, as in Lemma \ref{lemmacomputationclassifier}, that we get the following category. The objects are given by $k \geq 0$ and an object of $\mathtt{CG}(k)$, together with a colour $X$ or $K$ for $i=1,\ldots,k$. The morphisms are given by functions $g: \{1,\ldots,k\} \to \{1,\ldots,l\}$ satisfying the condition \ref{conditionclassifier} of Lemma \ref{lemmacomputationclassifier}. Moreover, the function $g$ should send an element $i \in \{1,\ldots,k\}$ coloured with $X$ or $K$ to an element of $\{1,\ldots,l\}$ with the same colour, and $g$ restricted to the set of elements coloured with $K$ should be injective and order-preserving. This category is a coproduct of categories with an initial object. The initial object in each component is given the object where each $i \in \{1,\ldots,k\}$ is coloured with $K$. In particular, the fundamental groupoid of this category is equivalent to a discrete groupoid.% and prove that it is a coproduct of categories with terminal object. The terminal object in each connected component is given by...
\end{proof}

\begin{remark}
	It was proved in \cite[Section 9.2]{bataninberger} that the polynomial monad $\mathbf{M}$ of example \ref{examplefreemonoidmoad} is \emph{tame} \cite[Definition 6.19]{bataninberger}. More explicitly, it was proved that the classifier $\mathbf{M}^{\mathbf{M}+1}$ is a coproduct of categories with a terminal object. In fact, each connected component has both an initial and a terminal object. In the proof of the lemma above however, the connected components of the classifier have an initial but not always a terminal object.
\end{remark}

%\begin{lemma}
%	The category of right modules over an operad is strongly $h$-monoidal.
%\end{lemma}
%
%\begin{proof}
%	
%\end{proof}

Let $\mathcal{A}$ and $\mathcal{B}$ be two (one-coloured) operads in a symmetric monoidal category $(\mathcal{V},\otimes,v)$. Recall that a \emph{left $\mathcal{A}$-module} is given by a collection $\mathcal{C} := (\mathcal{C}(k))_{k \geq 0}$ of objects in $\mathcal{V}$ together with maps
\[
	\mathcal{A}(k) \otimes \mathcal{C}(n_1) \otimes \ldots \otimes \mathcal{C}(n_k) \to \mathcal{C}(n_1+\ldots+n_k)
\]
satisfying axioms. Similarly, a \emph{right $\mathcal{B}$-module} is given by a collection $\mathcal{C} := (\mathcal{C}(k))_{k \geq 0}$ of objects in $\mathcal{V}$ together with maps
\[
\mathcal{C}(k) \otimes \mathcal{B}(n_1) \otimes \ldots \otimes \mathcal{B}(n_k) \to \mathcal{C}(n_1+\ldots+n_k)
\]
satisfying axioms. Finally, $\mathcal{C}$ is an \emph{$\mathcal{A}-\mathcal{B}$-bimodule} if it has left $\mathcal{A}$-module and right $\mathcal{B}$-module structure satisfying an extra compatibility axiom. An  \emph{$\mathcal{A}$-bimodule} is an $\mathcal{A}-\mathcal{B}$-bimodule where $\mathcal{A}=\mathcal{B}$.

\begin{lemma}\label{lemmaleftproper}
	The category of $\mathcal{K}_m$-bimodules is left proper.
\end{lemma}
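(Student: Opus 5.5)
The plan is to reduce the statement to Theorem \ref{theoremleftproper}, using the quasi-tameness of $\mathcal{K}_m$ from Lemma \ref{lemmaquasitame} together with the standard description of bimodules as algebras in right modules. The key observation is that a $\mathcal{K}_m$-bimodule is the same thing as a $\mathcal{K}_m$-algebra in the symmetric monoidal category $\mathcal{K}_m\text{-}\mathrm{RMod}$ of right $\mathcal{K}_m$-modules, equipped with the Day-type convolution product
\[
(A \otimes B)(n) = \coprod_{p+q=n} \left(A(p) \times B(q)\right) \times_{\Sigma_p \times \Sigma_q} \Sigma_n
\]
(cf. Fresse's treatment of modules over operads). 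Here the left action of $\mathcal{K}_m(k)$ on $\mathcal{C}(n_1)\otimes\ldots\otimes\mathcal{C}(n_k)$ is exactly an algebra structure for the operad $\mathcal{K}_m$, hence, by Proposition \ref{propositionequivalencealgebras}, for the polynomial $2$-monad $\mathcal{K}_m$. The compatibility axiom of a bimodule says precisely that the structure maps are morphisms of right modules.

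So the category of $\mathcal{K}_m$-bimodules is isomorphic to $\mathrm{Alg}_{\mathcal{K}_m}(\mathcal{K}_m\text{-}\mathrm{RMod})$. Since Lemma \ref{lemmaquasitame} gives quasi-tameness, Theorem \ref{theoremleftproper} applies, and the lemma follows, once we check that $\mathcal{E} = \mathcal{K}_m\text{-}\mathrm{RMod}$, with underlying category simplicial sets or spaces, is a compactly generated strongly $h$-monoidal model category.

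The model structure I would use is the projective one, transferred along the forgetful functor to symmetric collections, which itself carries the projective (or levelwise) structure. I would then verify the required properties as follows.
\begin{enumerate}
\item \emph{Weak equivalences are preserved by $-\otimes Z$.} In each arity the product is a coproduct of quotients of products by free $\Sigma_p\times\Sigma_q$-actions (induction up to $\Sigma_n$), and these preserve weak equivalences of simplicial sets.
\item \emph{Cofibrations tensored with $Z$ are $h$-cofibrations.} Generating cofibrations are free right modules on cells $\partial\Delta^n \times \Sigma_k \to \Delta^n\times\Sigma_k$. Tensoring with $Z$ gives arity-wise monomorphisms, so it suffices to show that arity-wise monomorphisms of right modules are $h$-cofibrations. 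Pushouts of right modules are computed arity-wise in the underlying category, and simplicial sets are left proper with all monomorphisms being $h$-cofibrations, so this holds.
\item \emph{Compact generation.} Every simplicial set is small with respect to monomorphisms, and weak equivalences are closed under filtered colimits along monomorphisms. Both properties are inherited arity-wise.
\end{enumerate}

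The main obstacle is the $h$-cofibration claim. One has to be careful that the monoidal saturation of the cofibrations stays inside the arity-wise monomorphisms. This is true because $\otimes$ preserves arity-wise monomorphisms and colimits are arity-wise. One also has to check that pushouts in right modules really are computed underlying: they are, since the forgetful functor to collections creates colimits, the free right module monad $-\circ\mathcal{K}_m$ preserving them.

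A secondary point is that Theorem \ref{theoremleftproper} is stated for polynomial monads, whereas $\mathcal{K}_m$ is a polynomial $2$-monad. I would invoke the standing assumption of the preliminaries that these results extend to polynomial $2$-monads.
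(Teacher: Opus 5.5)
Your proposal is correct and follows the paper's proof. Both arguments identify $\mathcal{K}_m$-bimodules with $\mathcal{K}_m$-algebras in right $\mathcal{K}_m$-modules under convolution (Fresse), invoke Lemma \ref{lemmaquasitame} and Theorem \ref{theoremleftproper}, and get strong $h$-monoidality from cofibrations and their tensors being arity-wise cofibrations, with pushouts and weak equivalences computed arity-wise in a left proper base. The differences are that the paper takes the base to be $\mathrm{Cat}$ rather than simplicial sets, and that you also check the weak-equivalence half of strong $h$-monoidality and compact generation, which the paper leaves implicit.
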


\begin{proof}
	First note that the category of bimodules over an operad $\mathcal{P}$ is equivalent to the category of algebras in the category of right modules over this operad \cite[Proposition 9.1.2]{fressebook}. The monoidal product $\otimes$ in the category of right modules is given by convolution. So we need to apply Theorem \ref{theoremleftproper} to the case where the polynomial $2$-monad $T$ is $\mathcal{K}_m$ and $\mathcal{E}$ is the category of right $\mathcal{K}_m$-modules. We know from Lemma \ref{lemmaquasitame} that $\mathcal{K}_m$ is quasi-tame. It remains to check that the category of right $\mathcal{K}_m$-modules is strongly $h$-monoidal. Since it is a category of presheaves, any cofibration $f$ is in particular a pointwise cofibration. Therefore, $f \otimes id$ is still a pointwise cofibration. This means that pushouts along $f \otimes id$ preserve weak equivalences, since pushouts and weak equivalences are given pointwise and $\mathrm{Cat}$ is left proper. So the category of right $\mathcal{K}_m$-modules is indeed strongly $h$-monoidal.
	
	Note that an alternative proof could have been to show directly that the polynomial $2$-monad for $\mathcal{K}_m$-bimodules is quasi-tame.
\end{proof}

\subsection{Delooping}

\begin{definition}\cite[Definition 4.2]{deleger}
	Let $\mathcal{M}$ be a model category and $A \in \mathcal{M}$. For $m \geq 0$, let $S^{m-1}(A) \in \mathcal{M}$ defined by induction as follows. $S^{-1}(A)$ is the initial object and, for $m \geq 0$, $S^m(A)$ is the pushout
	\[
	\xymatrix{
		S^{m-1}(A) \ar@{>->}[r] \ar@{>->}[d] \ar@{}[rd]|{po} & D^m(A) \ar[d] \\
		D^m(A) \ar[r] & S^m(A)
	}
	\]
	where $D^m(A)$ is a factorisation
	\[
	\xymatrix@C=1pc{
		S^{m-1}(A) \ar[rr] \ar@{>->}[rd] && A \\
		& D^m(A) \ar[ru]_\sim
	}
	\]
\end{definition}

To simplify the notations, let $t=(0,m+1)$ be the terminal object of $\mathfrak{D}^m$.
		
\begin{definition}
	Let $\mathfrak{S}^{m-1}$ be the full subcategory of $\mathfrak{D}^m$ of all the objects of $\mathfrak{D}^m$ except $t$.
\end{definition}

	\begin{lemma}\label{lemmaspherehocolimformula}
		Let $\mathcal{M}$ be a model category. For $A \in \mathcal{M}$, $S^{m-1}(A)$ can be computed as the homotopy colimit of the functor $\delta_A: \mathfrak{S}^{m-1} \to \mathcal{M}$ which is constant to $A$.
	\end{lemma}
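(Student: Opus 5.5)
We argue by induction on $m$, matching the inductive definition of $S^{m-1}(A)$ with a decomposition of the poset $\mathfrak{S}^{m-1}$. For $m=0$, $\mathfrak{D}^0=\{t\}$, so $\mathfrak{S}^{-1}$ is empty. The homotopy colimit over the empty category is the initial object, which is $S^{-1}(A)$.

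For $m\geq 1$, look at the two maximal elements $(-1,m)$ and $(1,m)$ of $\mathfrak{S}^{m-1}$. Every object of $\mathfrak{S}^{m-1}$ is $\leq(-1,m)$ or $\leq(1,m)$. The objects lying below both are exactly those with $l<m$, and these form a copy of $\mathfrak{S}^{m-2}$: indeed $\{-1,1\}\times\{1,\ldots,m-1\}$ with the given order is $\mathfrak{D}^{m-1}$ with its terminal object removed. Hence
\[
\mathfrak{S}^{m-1}=\mathcal{U}_-\cup\mathcal{U}_+,\qquad \mathcal{U}_\pm := \mathfrak{S}^{m-1}_{\leq(\pm1,m)},\qquad \mathcal{U}_-\cap\mathcal{U}_+\cong\mathfrak{S}^{m-2}.
\]
Each $\mathcal{U}_\pm$ is the cone $\mathfrak{S}^{m-2}\star\{(\pm1,m)\}$, and so has a terminal object. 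The nerve of $\mathfrak{S}^{m-1}$ is then the union of the nerves of the two sieves $\mathcal{U}_\pm$ along their intersection, since any chain in $\mathfrak{S}^{m-1}$ lies in one of them: a chain contains at most one of $(\pm1,m)$, because these two are incomparable.

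Next we use the standard decomposition of homotopy colimits over a category covered by two sieves $\mathcal{U}_-,\mathcal{U}_+$. For any diagram $F$ this gives a homotopy pushout
\[
\hocolim_{\mathfrak{S}^{m-1}}F\simeq \hocolim\Big(\hocolim_{\mathcal{U}_-}F\leftarrow \hocolim_{\mathcal{U}_-\cap\mathcal{U}_+}F\rightarrow \hocolim_{\mathcal{U}_+}F\Big).
\]
One proves this by writing $\mathfrak{S}^{m-1}$ as the pushout of categories $\mathcal{U}_-\cup_{\mathfrak{S}^{m-2}}\mathcal{U}_+$ and using that homotopy left Kan extensions along the sieve inclusions are computed pointwise, or directly by the Bousfield--Kan simplicial replacement, whose simplices split accordingly. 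Apply this to $F=\delta_A$. Since $\mathcal{U}_\pm$ has a terminal object, $\hocolim_{\mathcal{U}_\pm}\delta_A\simeq A$. By induction, $\hocolim_{\mathfrak{S}^{m-2}}\delta_A\simeq S^{m-2}(A)$.

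The remaining point is to identify the maps in the span. The map $\hocolim_{\mathfrak{S}^{m-2}}\delta_A\to\hocolim_{\mathcal{U}_\pm}\delta_A\simeq A$ is the canonical map $S^{m-2}(A)\to A$ used to build $D^{m-1}(A)$. Factoring it as a cofibration followed by a weak equivalence $S^{m-2}(A)\rightarrowtail D^{m-1}(A)\xrightarrow{\sim}A$ presents the span cofibrantly. The homotopy pushout is then the strict pushout $D^{m-1}(A)\cup_{S^{m-2}(A)}D^{m-1}(A)=S^{m-1}(A)$, provided $S^{m-2}(A)$ is cofibrant; this holds by induction, since it is built from the initial object by cofibrations.

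The main obstacle is checking that the inductive identification is natural enough: the equivalence $\hocolim_{\mathfrak{S}^{m-2}}\delta_A\simeq S^{m-2}(A)$ must be compatible with the augmentation to $A$, so that the span matches up to weak equivalence of diagrams. We will handle this by carrying the augmentation through the induction, that is, by proving the stronger statement that the equivalence lies over $A$. Note also that the statement involves no hypothesis on $A$, since $D^m$ is a cofibrant-source factorization and homotopy colimits are taken in the derived sense. If needed, we replace $A$ cofibrantly, which changes nothing up to weak equivalence.
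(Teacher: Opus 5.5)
Your argument is correct, but it takes a different route from the paper's.

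The paper's proof is one step. It takes the diagram $\delta'_A$ on $\mathfrak{S}^{m-1}$ with value $D^{l-1}(A)$ at $(\epsilon,l)$, with the evident structure maps through $S^{l-1}(A)$. (The paper writes $D^l(A)$, but the index must be shifted by one for the latching maps to be cofibrations and the colimit to be $S^{m-1}(A)$.) The maps $D^{l-1}(A)\xrightarrow{\sim}A$ form a natural weak equivalence $\delta'_A\to\delta_A$. Since $\mathfrak{S}^{m-1}$ is a direct category, the latching object at $(\epsilon,l)$ is $\colim_{\mathfrak{S}^{l-2}}\delta'_A=S^{l-2}(A)$, and the latching maps are the cofibrations $S^{l-2}(A)\to D^{l-1}(A)$. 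So $\delta'_A$ is cofibrant and $\hocolim\delta_A\simeq\colim\delta'_A=S^{m-1}(A)$.

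You instead use a homotopy-pushout decomposition of homotopy colimits over a union of two sieves and induct on $m$. Both proofs run on the same recursion $\mathfrak{S}^{m-1}=\mathcal{U}_-\cup_{\mathfrak{S}^{m-2}}\mathcal{U}_+$; the paper's $\delta'_A$ is built level by level exactly along your decomposition. The paper's packaging gains two things.

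\begin{itemize}
\item The compatibility with the augmentation to $A$, which you rightly flag as the delicate point, comes for free as the naturality of $\delta'_A\to\delta_A$. In your version it must be carried through the induction. The cleanest way is to run the whole argument in $\mathcal{M}/A$, where $\delta_A$ is the constant diagram at the terminal object $\mathrm{id}_A$ and colimits, cofibrations and weak equivalences are created in $\mathcal{M}$.
\item No general decomposition lemma is needed. In an arbitrary model category, your Bousfield--Kan justification requires frames and objectwise cofibrant diagrams. The most economical proof of that lemma for a direct category like $\mathfrak{S}^{m-1}$ is to restrict a Reedy cofibrant replacement to the two sieves, which is what the paper does directly.
\end{itemize}

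What your route gains is a transparent geometric picture: the nerve of $\mathfrak{S}^{m-1}$ is two cones on the nerve of $\mathfrak{S}^{m-2}$, mirroring $S^{m-1}(A)=D^{m-1}(A)\cup_{S^{m-2}(A)}D^{m-1}(A)$.
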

	
	\begin{proof}
		Let $\delta'_A: \mathfrak{S}^{m-1} \to \mathcal{M}$ be the functor sending $(\epsilon,l)$ to $D^l(A)$. Then $\delta'_A$ is a cofibrant replacement $\delta_A$ in the category of covariant presheaves over $\mathfrak{S}^{m-1}$ in $\mathcal{M}$. Therefore $\hocolim \delta_A = \colim \delta'_A = S^{m-1}(A)$.
	\end{proof}
	
	For a model category $\mathcal{M}$, let $\mathcal{M}^\mathsf{h}(-,-)$ be the derived mapping space in $\mathcal{M}$. Let $\mathrm{Bimod}_{\mathcal{K}_m}$ be the category of $\mathcal{K}_m$-bimodules.
	
	\begin{lemma}\label{lemmageneraldelooping}
		For any morphism $f: X \to Y$ in $\mathrm{Bimod}_{\mathcal{K}_m}$, there is a weak equivalence
		\[
		\Omega^m \mathrm{Bimod}_{\mathcal{K}_m}^\mathsf{h}(X,Y) \xrightarrow{\sim} (S^{m-1}(X)/\mathrm{Bimod}_{\mathcal{K}_m})^\mathsf{h} (X,Y).
		\]
%		where, for a model category $\mathcal{M}$, $\mathcal{M}^\mathsf{h}(-,-)$ is the derived mapping space in $\mathcal{M}$.
	\end{lemma}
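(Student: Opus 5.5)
This is the general delooping statement of \cite[Lemma 4.3]{deleger}, so the plan is to redo that argument in $\mathrm{Bimod}_{\mathcal{K}_m}$, using Lemma \ref{lemmaleftproper} wherever left properness is needed. Throughout, undercategories $S^{m-1}(X)/\mathrm{Bimod}_{\mathcal{K}_m}$ carry the induced model structure. We view $X$ and $Y$ as objects under $S^{m-1}(X)$ via the composite $S^{m-1}(X) \to X \to Y$, where the first map is the fold map $D^m(X)\cup_{S^{m-1}(X)} D^m(X) \to X$ restricted along the inclusion.

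\textbf{Step 1: a fibre sequence.} Take $X$ cofibrant, which is harmless since derived mapping spaces are invariant under cofibrant replacement. The derived mapping space in the undercategory $S^{m-1}(X)/\mathrm{Bimod}_{\mathcal{K}_m}$ is the homotopy fibre of the restriction map
\[
\mathrm{Bimod}_{\mathcal{K}_m}^\mathsf{h}(X,Y) \to \mathrm{Bimod}_{\mathcal{K}_m}^\mathsf{h}(S^{m-1}(X),Y)
\]
over the point given by the composite $S^{m-1}(X)\to X \xrightarrow{f} Y$. This is the standard description of mapping spaces in coslice model categories. Left properness (Lemma \ref{lemmaleftproper}) ensures that the induced model structure on the undercategory is homotopy invariant, meaning it does not depend on the chosen cofibrant model of $S^{m-1}(X)$. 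I expect this to be the only place where left properness is really used.

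\textbf{Step 2: identify the base with a cotensor with a sphere.} By Lemma \ref{lemmaspherehocolimformula}, $S^{m-1}(X)=\hocolim_{\mathfrak{S}^{m-1}} \delta_X$. Mapping spaces send homotopy colimits to homotopy limits, so
\[
\mathrm{Bimod}^\mathsf{h}(S^{m-1}(X),Y) \simeq \mathrm{holim}_{\mathfrak{S}^{m-1}} \mathrm{Bimod}^\mathsf{h}(X,Y) \simeq \mathrm{Map}\bigl(B\mathfrak{S}^{m-1}, \mathrm{Bimod}^\mathsf{h}(X,Y)\bigr).
\]
By Remark \ref{remarkoct}, $B\mathfrak{S}^{m-1}$ is the boundary of the convex hull of the $2m$ face centres, which is a sphere $S^{m-1}$. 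Under these identifications, the restriction map of Step 1 becomes the map from the space to its free $(m-1)$-sphere space that sends a point to the constant map. The basepoint $f$ is sent to the constant map at $f$.

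\textbf{Step 3: compute the fibre.} The homotopy fibre of the constant-map inclusion $Z \to \mathrm{Map}(S^{m-1},Z)$ over the constant map at $f$ is $\Omega^m_f Z$. One way to see this is through the cofibre sequence $S^{m-1}\to D^m \to S^m$. Equivalently, the fibre is the space of maps $S^{m-1}\times[0,1] \to Z$ that are constant at one end and equal to $f$ at the other, and this is $\Omega^m Z$.

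The main obstacle is Step 3. Naturality must produce an actual map $\Omega^m\mathrm{Bimod}^\mathsf{h}(X,Y) \to (S^{m-1}(X)/\mathrm{Bimod})^\mathsf{h}(X,Y)$ in the direction stated, and the constant map must be matched with the structure map $S^{m-1}(X)\to X$. For this I would follow the inductive construction in \cite[Section 4]{deleger}: build the map one dimension at a time through the pushout squares defining $S^{l}(X)$, and at each stage use left properness again so that each square is a homotopy pushout.
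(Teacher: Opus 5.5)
Your argument is correct, but it takes a different route from the paper. The paper does no computation. It appeals to the general delooping theorem for left proper model categories, \cite[Theorem 4.5]{deleger}, and asserts that the proof given there for contractible $X$ goes through verbatim for arbitrary $X$. It then only verifies the hypothesis, via Lemma \ref{lemmaleftproper}.

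You instead prove the statement directly, in three moves. First, the coslice mapping space is the homotopy fibre of restriction along $S^{m-1}(X)\to X$. Second, Lemma \ref{lemmaspherehocolimformula} identifies the base with $\mathrm{Map}(B\mathfrak{S}^{m-1},Z)$, where $Z=\mathrm{Bimod}_{\mathcal{K}_m}^\mathsf{h}(X,Y)$; here $B\mathfrak{S}^{m-1}$ is the join of $m$ copies of $S^0$, hence $S^{m-1}$. Third, the homotopy fibre of $Z\to Z^{S^{m-1}}$ over $c_f$ is the space of maps of pairs $(D^m,S^{m-1})\to(Z,f)$, which is $\Omega^m_f Z$.

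Your route is self-contained and in fact needs no left properness. $S^{m-1}(X)$ is cofibrant by construction: it is built from the cofibrant replacement $D^0(X)$ by pushouts along cofibrations between cofibrant objects. Consequently:
\begin{itemize}
\item $D^m(X)$ is a cofibrant replacement of $X$ in $S^{m-1}(X)/\mathrm{Bimod}_{\mathcal{K}_m}$;
\item the fibre description of coslice mapping spaces holds;
\item the squares defining $S^l(X)$ are homotopy pushouts in any model category.
\end{itemize}
You also need not make $X$ cofibrant first, which would in any case change the coslice. So your remarks on where left properness enters are superfluous rather than wrong, and your proof gives the lemma in any model category. The paper's route is shorter on the page, but it rests on accepting that the argument of \cite{deleger} extends beyond contractible $X$.

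The ``main obstacle'' you flag is not a real one. Derived mapping spaces can be taken to be Kan complexes, so your zigzag of weak equivalences is realised by an actual weak equivalence in the stated direction. No inductive construction through the squares defining $S^l(X)$ is needed. The one point you must check is that restriction along $S^{m-1}(X)\to X$ corresponds to the constant-map map $Z\to Z^{S^{m-1}}$. This holds because that structure map is induced by the natural weak equivalence $\delta'_X\Rightarrow\delta_X$, followed by the identity cocone on $\delta_X$.
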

	
	\begin{proof}
		This delooping actually holds not only for $\mathrm{Bimod}_{\mathcal{K}_m}$ but in general for any left proper model category. This was proved in \cite[Theorem 4.5]{deleger} when $X$ is contractible. However the theorem actually remains true for any $X$, since the same arguments still apply. Since $\mathrm{Bimod}_{\mathcal{K}_m}$ is left proper according to Lemma \ref{lemmaleftproper}, it also holds in this particular case.
	\end{proof}

%	\begin{theorem}
%		Let $T$ be a categorical operad and $A \to B$ be a strict morphism of categorical $T$-algebras. If $T$ is quasi-tame, we have a weak equivalence
%		\[
%		\Omega \mathrm{Alg}_T (A,B) \to \mathrm{Alg}_{T_A} (A,B)
%		\]
%	\end{theorem}
%	
%	\begin{proof}
%		It is a generalisation of \cite[Theorem 4.5]{deleger}.
%	\end{proof}

\subsection{Applying the cofinality result}

Let $\mathcal{M}$ be the category of right $\mathcal{K}_m$-modules.

\begin{definition}\label{definitionphi}
	Let $\mathcal{K}_m^{\diamonddots}$ be the polynomial $2$-monad $\mathcal{K}_m^{\fivedots}$ restricted to the set of objects of $\mathfrak{S}^{m-1}$. Let $\mathbb{B}$ be the category of algebras over $\mathcal{K}_m^{\diamonddots}$ and
	\[
	\Phi: \mathbb{B} \to \mathrm{CAT}
	\]
	be the contravariant functor which sends $b \in \mathbb{B}$ to the category of objects $X \in \mathcal{M}$ together with the structure of a $\mathcal{K}_m^{\fivedots}$-algebra on the pair $(b,X)$.
\end{definition}

\begin{definition}
	Let $\mathcal{K}_m^{\rotbox}$ be the polynomial $2$-monad $\mathcal{K}_m^{\rotboxtimes}$ restricted to the set of objects of $\mathfrak{S}^{m-1}$. Let $\mathbb{C}$ be the category of algebras over $\mathcal{K}_m^{\rotbox}$ and
	\[
	\Psi: \mathbb{C} \to \mathrm{CAT}
	\]
	be the contravariant functor which sends $c \in \mathbb{C}$ to the category of objects $Y \in \mathcal{M}$ together with the structure of a $\mathcal{K}_m^{\rotboxtimes}$-algebra on the pair $(c,Y)$.
\end{definition}

%Test $\square$ and $\boxtimes$.

%\begin{definition}
%	Let $\mathcal{B}$ and $\mathcal{C}$ be respectively the categories of algebras of $\mathcal{K}_m^{\diamonddots}$ and $\mathcal{K}_m^{\rotbox}$. Let
%	\[
%		\Phi: \mathcal{B}^{op} \to \mathrm{CAT}
%	\]
%	be the functor which sends $b \in \mathcal{B}$ to the category of small categories $X$ such that the pair $(b,X)$ has the structure of a $\mathcal{K}_m^{\fivedots}$-algebra.
%\end{definition}
%
%\begin{lemma}
%	If $f: S \to T$ is homotopically cofinal, then for any $T$-algebra $A$, $f_! f^*(A) \sim A$.
%\end{lemma}
%
%\begin{proof}
%	Let $\tilde{A}: T^T \to M$ be the functor representing $A$. We have
%	\[
%		\mathbb{L}f_! f^*(A) = \hocolim_{T^S} f^*(A) = \hocolim_{T^T} A = A.
%	\]
%\end{proof}
%To simplify the notations, let $(b) := D^0(\mathcal{K}_m)$.

\begin{lemma}\label{lemmareflection}
	Let $f: S \to T$ be a morphism of polynomial $2$-monads. If the functor $T^S \to T^T$ preserves homotopy colimits, then for $T$-algebra $X$, there is a weak equivalence
	\[
		\mathbb{L} f_! f^*(X) \simeq X.
	\]
\end{lemma}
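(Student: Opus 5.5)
The plan is to use the homotopy colimit formula of Theorem \ref{theoremformulaleftderived} twice: once for $f$ and once for the identity morphism $id: T \to T$. Let $X$ be a $T$-algebra whose underlying collection is pointwise cofibrant; in general, first replace $X$ by such an algebra, since both sides of the desired equivalence are homotopy invariant. Write $\widetilde{X}: T^T \to \mathcal{E}$ for the functor representing $X$, and $j: T^S \to T^T$ for the canonical functor induced by $f$, given by \cite[Proposition 4.7]{batanindeleger} applied to the square with $f$ on the left and identities elsewhere.

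The first step is to identify the representing functor of the restricted algebra $f^*(X)$ with the composite $\widetilde{X} \circ j$. This comes from the universal property of classifiers. The functor $\widetilde{f^*X}: T^S \to \mathcal{E}$ is obtained by extending $f^*X$, seen as an internal $S$-algebra, along the universal internal $S$-algebra in $T^S$. The internal $S$-algebra in $T^T$ obtained by restricting the universal internal $T$-algebra along $f$ is classified precisely by $j$. Hence $\widetilde{f^*X} = \widetilde{X} \circ j$, which one can check directly on the bar constructions of Theorem \ref{theoremformulaclassifier}.

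With this identification, Theorem \ref{theoremformulaleftderived} gives
\[
\mathbb{L} f_! f^*(X) \simeq \hocolim_{T^S} \widetilde{X} \circ j .
\]
The hypothesis that $j$ preserves homotopy colimits, read as homotopy cofinality of $j$, gives
\[
\hocolim_{T^S} \widetilde{X} \circ j \simeq \hocolim_{T^T} \widetilde{X} .
\]
Applying Theorem \ref{theoremformulaleftderived} again to the identity morphism gives $\hocolim_{T^T} \widetilde{X} \simeq \mathbb{L} id_!(X) \simeq X$. Alternatively, $T^T$ has a terminal object in each component indexed by the colours (the identity operations), and $\widetilde{X}$ takes the value $X$ there, so the homotopy colimit is $X$ directly.

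The main obstacle is making the first step rigorous in the $2$-monad setting, together with the naturality of the equivalences. The comparison map $\mathbb{L} f_! f^*(X) \to X$ should be the derived counit, and we must check that under the identifications above it corresponds to the canonical map $\hocolim_{T^S} \widetilde{X} j \to \hocolim_{T^T} \widetilde{X}$. I would handle this by working levelwise in the simplicial bar resolution, where both sides are given by the same formula, and by appealing, as the paper does, to the assumption that the results of \cite{bataninberger} extend to polynomial $2$-monads.
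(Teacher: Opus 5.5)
Your proposal is correct and follows the paper's approach. The paper's proof is the one-line remark that the lemma is a direct consequence of Theorem \ref{theoremformulaleftderived}, and your chain $\mathbb{L}f_!f^*(X)\simeq\hocolim_{T^S}\widetilde{X}\circ j\simeq\hocolim_{T^T}\widetilde{X}\simeq X$, using $\widetilde{f^*X}=\widetilde{X}\circ j$ and the terminal objects of $T^T$, spells out exactly that argument; your added care about pointwise cofibrancy and about identifying the equivalence with the derived counit goes beyond what the paper writes (the latter is what the adjunction argument in Lemma \ref{lemmaquillenreflection} implicitly relies on).
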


\begin{proof}
	It is a direct consequence of Theorem \ref{theoremformulaleftderived}.
\end{proof}

%\begin{lemma}
%	There is a weak equivalence $\mathbb{L}f_! f^* (\mathcal{K}_m) \simeq \mathcal{K}_m$.
%\end{lemma}
%
%\begin{proof}
%	According to Theorem \ref{theoremformulaleftderived}, $\mathbb{L}f_! (\mathcal{K}_m)$ can be computed as a homotopy colimit over the classifier $\mathcal{X}$ associated to $f$. As it was proved in Theorem \ref{theoremcofinality}, the functor $F: \mathcal{X} \to \mathcal{Y}$ is such that for all $y \in \mathcal{Y}$, $y/F$ has a contractible nerve. This means that $F$ preserves homotopy colimits. 
%\end{proof}

Let $f: \mathcal{K}_m^\fivedots \to \mathcal{K}_m^{\rotboxtimes}$ and $p: \mathcal{K}_m^{\rotboxtimes} \to \mathcal{K}_m$ be the morphisms of polynomial $2$-monads as in the square \eqref{equationsquare}. Abusing notations, let $\mathcal{K}_m$ be the algebra over itself in $\mathcal{M}$ and $\mathcal{K}_m := p^*(\mathcal{K}_m)$ and $\mathcal{K}_m := f^* p^* (\mathcal{K}_m)$.%, where $\mathcal{K}_m$ is seen as an algebra over itself in $\mathcal{M}$.%, to  := f^* p^* (\mathcal{K}_m)$, where $\mathcal{K}_m$ is .%, along the map of polynomial $2$-monads $pf$ of the square \eqref{equationsquare}.

\begin{lemma}\label{lemmaquillenreflection}
	If $(b,X) = D^0(\mathcal{K}_m) \in \int \Phi$, then for any map $\mathcal{K}_m \to Y$ in $\Psi(b)$, there is a weak equivalence
	\[
		\Psi(b)^\mathsf{h} (\mathcal{K}_m,Y) \xrightarrow{\sim} \Phi(b)^\mathsf{h} (\mathcal{K}_m,Y).
	\]
%	There is a Quillen reflection
%	\[
%	\xymatrix{
%		\Phi(D^0(\mathcal{K}_m)) \ar@/^/[r] \ar@{}[r]|\perp & \Psi(D^0(\mathcal{K}_m)) \ar@/^/[l]
%	}
%	\]
\end{lemma}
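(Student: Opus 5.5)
The approach is to realise the comparison map as the derived mapping-space map of a Quillen adjunction between the fibre categories, and then show that this adjunction is a homotopy reflection.

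First I set up the adjunction. With $b$ fixed, the morphism $f:\mathcal{K}_m^\fivedots\to\mathcal{K}_m^{\rotboxtimes}$ restricts to the identity on the colours of $\mathfrak{S}^{m-1}$; this holds because the suboperad $\mathcal{K}_m^{\diamonddots}$ sits inside $\mathcal{K}_m^{\rotbox}$. Hence $f$ induces a restriction functor $f^*_b:\Psi(b)\to\Phi(b)$, where $b$ is regarded as a $\mathcal{K}_m^{\rotbox}$-algebra. I would show that $f^*_b$ has a left adjoint $f_{!,b}$. This left adjoint is the relative left Kan extension: the free $\mathcal{K}_m^{\rotboxtimes}$-structure on the last colour, relative to the fixed structure on $b$. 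Both fibre categories carry transferred model structures, in which weak equivalences and fibrations are detected in $\mathcal{M}$ on the colour $t$. Since $f^*_b$ is the identity on underlying objects, it preserves these, so $(f_{!,b},f^*_b)$ is a Quillen pair. The map in the statement is then the composite
\[
\Psi(b)^\mathsf{h}(\mathcal{K}_m,Y)\to\Psi(b)^\mathsf{h}(\mathbb{L}f_{!,b}f^*_b\mathcal{K}_m,Y)\simeq\Phi(b)^\mathsf{h}(f_b^*\mathcal{K}_m,Y),
\]
where the first arrow is induced by the derived counit $\mathbb{L}f_{!,b}f^*_b(\mathcal{K}_m)\to\mathcal{K}_m$. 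It therefore suffices to prove that this counit is a weak equivalence.

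Next I compute the derived counit using Theorem \ref{theoremformulaleftderived}, applied to the pair $(D^0(\mathcal{K}_m),X)$. The hypothesis $(b,X)=D^0(\mathcal{K}_m)$ means that this pair is a cofibrant replacement of $\mathcal{K}_m$ in $\int\Phi$. In particular $b$ is cofibrant and the underlying collection is pointwise cofibrant, which is needed both to apply the formula and to ensure that the relative left adjoint at the fixed $b$ computes the global $\mathbb{L}f_!$ restricted to the $t$-colour. Under these conditions $\mathbb{L}f_!f^*(D^0\mathcal{K}_m)$ is the homotopy colimit over the classifier $\mathcal{X}=(\mathcal{K}_m^{\rotboxtimes})^{\mathcal{K}_m^\fivedots}$ of the representing functor. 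I then use the functor $F:\mathcal{X}\to\mathcal{Y}$ from the proof of Theorem \ref{theoremcofinality}. There it was shown that every comma category $y/F$ has contractible nerve, so $F$ is homotopy left cofinal and preserves homotopy colimits. Because the representing functor of $f^*p^*\mathcal{K}_m$ factors through $F$, Lemma \ref{lemmareflection} gives $\mathbb{L}f_!f^*(D^0\mathcal{K}_m)\simeq D^0\mathcal{K}_m\simeq\mathcal{K}_m$. Restricting to the colour $t$ over the fixed $b$ then shows that the counit is a weak equivalence.

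The main obstacle is the passage from the global left adjoint $f_!$, acting on $\mathcal{K}_m^\fivedots$-algebras, to the fibrewise adjoint at a fixed $b$. One has to check that $f_!$ leaves the $\mathfrak{S}^{m-1}$-part of a pair unchanged up to weak equivalence, so that $\mathbb{L}f_!(b,X)\simeq(b,\mathbb{L}f_{!,b}X)$. I would attack this directly: since $f$ is the identity on $\mathcal{K}_m^{\diamonddots}\to\mathcal{K}_m^{\rotbox}$ only after restricting colours, I first compute the classifier's fibres over colours in $\mathfrak{S}^{m-1}$. These fibres have terminal objects, by the argument in the proof of Theorem \ref{theoremcofinality}, so the $\mathfrak{S}^{m-1}$-components are preserved. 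Left properness, from Lemma \ref{lemmaleftproper} and the quasi-tameness in Lemma \ref{lemmaquasitame}, would be used to show that derived mapping spaces in the fibre $\Phi(b)$ are computed by the under-category $b/\int\Phi$. Once that identification is in place, the weak equivalence of derived mapping spaces follows from the counit being a weak equivalence.
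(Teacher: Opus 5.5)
Your outline is the paper's: $(b,X)=D^0(\mathcal{K}_m)$ is cofibrant, so $f_!(b,X)\simeq\mathbb{L}f_!(\mathcal{K}_m)$. The comma categories $y/F$ are contractible, so Lemma \ref{lemmareflection} gives $\mathbb{L}f_!(\mathcal{K}_m)\simeq\mathcal{K}_m$. The adjunction argument of \cite[Lemma 4.9]{deleger} then turns this counit equivalence into the equivalence of derived mapping spaces. However, the fibrewise adjunction is set up on a false claim: $f$ does not restrict to the identity over $\mathfrak{S}^{m-1}$.

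The restriction $g\colon\mathcal{K}_m^{\diamonddots}\to\mathcal{K}_m^{\rotbox}$ is a proper inclusion for $m\geq 2$. For $m=2$, take $\mu_{12}=1$, sources $(1,1),(-1,1)$ and target $(-1,2)$. This lies in $\mathtt{CCG}$ but not in $\mathtt{PCG}$, since $(1,1)\not\leq(-1,1)$ and $(-1,1)\not\leq(1,1)$. Consequences:
\begin{itemize}
\item $b\in\mathbb{B}$ is not a $\mathcal{K}_m^{\rotbox}$-algebra, and there is no restriction functor $\Psi(b)\to\Phi(b)$.
\item The identity $\mathbb{L}f_!(b,X)\simeq(b,\mathbb{L}f_{!,b}X)$, which you single out as the main obstacle, is not well-typed: $f_!$ replaces $b$ by $g_!(b)$.
\item Your proposed way around it is also false. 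The components of the classifier over colours of $\mathfrak{S}^{m-1}$ have no terminal object. The only candidate in colour $(-1,2)$ is the one-vertex object coloured $(-1,2)$. The object above has no morphism to it, because condition (2) of Lemma \ref{lemmacomputationclassifier} fails on the fibre. What has terminal objects are the fibres $F_y$, and using those requires the full cofinality argument anyway.
\end{itemize}

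The paper avoids all this by working over $g_!(b)$. The adjunction $f_!\dashv f^*$ lies over $g_!\dashv g^*$, so it induces a fibrewise adjunction $H\colon\Phi(b)\rightleftarrows\Psi(g_!b)$. Its right adjoint is restriction along $f$ followed by pullback along the unit $b\to g^*g_!b$. This is still the identity on the underlying object of $\mathcal{M}$, so your Quillen-pair argument survives. By construction $f_!(b,X)=(g_!b,H(X))$. The single global equivalence $f_!(b,X)\simeq\mathcal{K}_m$ then gives, componentwise, both $g_!(b)\simeq\mathcal{K}_m$ and $H(X)\simeq\mathcal{K}_m$. So the derived counit is an equivalence, and no separate treatment of the $\mathfrak{S}^{m-1}$-part or left-properness input is needed.

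Accordingly, $\Psi(b)$ in the statement must be read as $\Psi(g_!b)$, with $\mathcal{K}_m$ pulled back along $g_!b\to\mathcal{K}_m$. With that correction your counit argument is exactly the paper's.
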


\begin{proof}
	We follow the same arguments as in the proof of \cite[Lemma 4.11]{deleger}. The map of polynomial $2$-monads $f: \mathcal{K}_m^{\fivedots} \to \mathcal{K}_m^{\rotboxtimes}$ restricts to a map $g: \mathcal{K}_m^{\diamonddots} \to \mathcal{K}_m^{\rotbox}$. It is also homotopically cofinal since the classifier is also given by restriction. Let $H: \Phi(b) \to \Psi g_! (b)$ be the functor such that $f_!(b,X) = (g_!(b),H(X))$. %Note that $X$ is actually a cofibrant replacement of $\mathcal{K}_m$ in $\Phi(b)$. 
	Since $(b,X)$ is a cofibrant replacement of $\mathcal{K}_m$, $f_!(b,X) \simeq \mathbb{L}f_! (\mathcal{K}_m)$. It was proved in Theorem \ref{theoremcofinality} that the functor $F: \mathcal{X} \to \mathcal{Y}$ defined in \eqref{equationsmoothfunctor} is such that for all $y \in \mathcal{Y}$, the nerve of $y/F$ is contractible. This means that $F$ preserves homotopy colimits. So we can apply Lemma \ref{lemmareflection}, to get the weak equivalence $\mathbb{L}f_! (\mathcal{K}_m) \simeq \mathcal{K}_m$. %$\mathbb{L}f_! (\mathcal{K}_m) \simeq \mathcal{K}_m$.  $X \in \Phi(b)$ is also cofibrant. Using Theorem \ref{theoremformulaleftderived},
%	\[
%		f_!(b,X) \simeq \mathbb{L}f_! (\mathcal{K}_m) \simeq \mathcal{K}_m.% \hocolim_{\mathcal{X}} \widetilde{\mathcal{K}_m} = \hocolim_{\mathcal{Y}} \widetilde{\mathcal{K}_m}
%	\]
	We deduce that $f_!(b,X) \simeq \mathcal{K}_m$, and in particular $H(X) \simeq \mathcal{K}_m$. %Note that $X$ is actually a cofibrant replacement of $\mathcal{K}_m$ in $\Phi(b)$, so we have $H(X) \simeq X$. 
	The conclusion follows from an adjunction argument similar to \cite[Lemma 4.9]{deleger}.
%	\[
%		f_!(b,X) = \mathbb{L}f_!(b,X) = \hocolim_{\mathcal{X}} \widetilde{(b,X)}
%	\]
%	We want to apply \cite[Theorem 3.3]{bwdgrothendieck}.
\end{proof}

%\begin{lemma}
%	There is a Quillen adjunction
%	
%\end{lemma}
%
%\begin{proof}
%	We will use 
%\end{proof}

\subsection{Fibration sequence}% and rectification}

\begin{definition}
	Let $\mathcal{K}_m^{\specialdots}$ be the polynomial $2$-monad given by the polynomial
	\[
	\xymatrix{
		\mathrm{ob}(\mathfrak{D}^m) & \mathtt{UCG}^* \ar[l] \ar[r] & \mathtt{UCG} \ar[r] & \mathrm{ob}(\mathfrak{D}^m)
	}
	\]
	%	\[
	%	\xymatrix{
		%		\mathrm{ob}(\mathfrak{D}^m) & (\mathbb{K}_m^{\fivedots})^* \ar[l] \ar[r] & \mathbb{K}_m^{\fivedots} \ar[r] & \mathrm{ob}(\mathfrak{D}^m)
		%	}
	%	\]
	where $\mathtt{UCG}$ is the full subcategory of objects $(\mu,c_1,\ldots,c_k,c) \in \mathtt{PCG}$ such that if $c = t$, then there is exactly one $i \in \{1,\ldots,k\}$ such that $c_i = t$. The rest of the description is as in Definition \ref{definitionkmrotboxpolymon}.% of objects $(\mu,c_1,\ldots,c_k,c) \in \mathtt{CCG}(k)$ such that for all $ij \in {k \choose 2}$, $c_i \leq (-1,\mu_{ij})$ or $c_j \leq (1,\mu_{ij})$.
%	Let $\mathcal{K}_m^{\specialdots}$ be the sub-polynomial $2$-monads of $\mathcal{K}_m^\fivedots$ whose operations with target $(0,m+1)$ have exactly one vertex labelled with $(0,m+1)$.
\end{definition}

\begin{definition}
	Let $\mathbb{B}$ be as in Definition \ref{definitionphi} and
	\[
		\Phi^\circ: \mathbb{B} \to \mathrm{CAT}
	\]
	be the contravariant functor which sends $b \in \mathbb{B}$ to the category of objects $X \in \mathcal{M}$ together with the structure of a $\mathcal{K}_m^{\specialdots}$-algebra on the pair $(b,X)$.
\end{definition}

\begin{lemma}\label{lemmarectification}
	Any weak equivalence $b_1 \to b_2$ in $\mathbb{B}$ induces a Quillen equivalence between $\Phi^\circ(b_1)$ and $\Phi^\circ(b_2)$.
	%	\[
	%	\xymatrix{
		%		\Phi^\circ(b_1) \ar@/^/[r] \ar@{}[r]|\perp & \Phi^\circ(b_2). \ar@/^/[l]
		%	}
	%	\]
\end{lemma}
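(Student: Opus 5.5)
The strategy is to realise $\Phi^\circ(b)$ as a category of algebras over a polynomial $2$-monad that depends on $b$, namely the ``enveloping'' monad of $b$ relative to the inclusion $\mathcal{K}_m^{\diamonddots} \to \mathcal{K}_m^{\specialdots}$. By definition of $\mathtt{UCG}$, an operation of $\mathcal{K}_m^{\specialdots}$ with target colour $t$ has exactly one input coloured $t$, and all other inputs are coloured in $\mathfrak{S}^{m-1}$. So a $\mathcal{K}_m^{\specialdots}$-algebra structure on $(b,X)$ with $b$ fixed amounts to a collection of action maps of the form
\[
\mathcal{K}_m^{\specialdots}(c_1,\ldots,t,\ldots,c_k;t) \otimes b_{c_1} \otimes \cdots \otimes X \otimes \cdots \otimes b_{c_k} \to X
\]
in $\mathcal{M}$. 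This is exactly the data of a left module over an associative monoid $U(b)$ in $\mathcal{M}$, the ``universal enveloping'' monoid. There are no operations with several inputs coloured $t$, which is precisely why we pass from $\mathcal{K}_m^\fivedots$ to $\mathcal{K}_m^{\specialdots}$.

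First I will write $U(b)$ explicitly as a coequaliser. Its generating object is
\[
\coprod_{k}\ \mathcal{K}_m^{\specialdots}(c_1,\ldots,t,\ldots,c_k;t) \otimes b_{c_1}\otimes\cdots\otimes b_{c_k},
\]
with the $t$ slot omitted from the tensor of the $b$'s. This is quotiented by the action of $\mathcal{K}_m^{\diamonddots}$-operations inserted into the $b$ slots. With this description, $\Phi^\circ(b)\cong U(b)\text{-}\mathrm{Mod}(\mathcal{M})$, naturally in $b$. A map $b_1 \to b_2$ induces a monoid map $U(b_1)\to U(b_2)$, and the induced adjunction is extension and restriction of scalars. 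Restriction preserves fibrations and weak equivalences, which are created in $\mathcal{M}$, so the adjunction is Quillen.

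Second, I will invoke the standard criterion. For modules over monoids in a suitable monoidal model category, extension and restriction along a weak equivalence of monoids $U(b_1)\to U(b_2)$ is a Quillen equivalence, provided the monoids have cofibrant (or flat) underlying objects, or the category satisfies the monoid axiom and tensoring with cofibrant objects preserves weak equivalences. In the present setting, $\mathcal{M}$ is a presheaf category of right $\mathcal{K}_m$-modules in $\mathrm{Cat}$, and the proof of Lemma \ref{lemmaleftproper} already shows that $\mathcal{M}$ is strongly $h$-monoidal. So the lemma reduces to showing that $b\mapsto U(b)$ sends weak equivalences to weak equivalences.

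This last step is the main obstacle. I would handle it by expressing $U(b)$ through Batanin–Berger's classifier machinery: $U(b)$ is $\mathbb{L}$ of a left adjoint evaluated at the colour $t$, computed as a homotopy colimit over the classifier associated to the morphism $\mathcal{K}_m^{\diamonddots}\to\mathcal{K}_m^{\specialdots}$, in the spirit of Theorem \ref{theoremformulaleftderived}. Such homotopy colimits preserve weak equivalences between algebras with pointwise cofibrant underlying collections. To ensure the strict coequaliser agrees with the homotopy one, I would use quasi-tameness of $\mathcal{K}_m^{\specialdots}$ relative to $\mathcal{K}_m^{\diamonddots}$, proved as in Lemma \ref{lemmaquasitame}: the relevant classifier is a coproduct of categories with initial (or terminal) objects, so the colimit computing $U(b)$ is homotopically correct. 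Failing that, I would first reduce to cofibrant $b_1,b_2$ by 2-out-of-3 and then use the fact that $U$ preserves cofibrant objects and weak equivalences between them, since it is a left Quillen functor to monoids.
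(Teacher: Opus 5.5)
Your first two steps are sound and match the paper's viewpoint. The paper's proof only observes that each $\Phi^\circ(b)$ is a category of presheaves (your modules over the enveloping monoid $U(b)$), and then invokes Theorem~3.17(1) of Batanin--White--De~Leger. So all the content lies in your third step: that $U(b_1)\to U(b_2)$ is a weak equivalence for \emph{every} weak equivalence $b_1\to b_2$ in $\mathbb{B}$. This step is also necessary, since the derived unit at the free module $U(b_1)\otimes I$ is exactly this map. Your arguments for it do not work, for two reasons.

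\begin{itemize}
\item You use the wrong classifier. The classifier of $\mathcal{K}_m^{\diamonddots}\to\mathcal{K}_m^{\specialdots}$ is empty over $t$, and it computes the initial object of $\Phi^\circ(b)$. Instead, $U(b)$ is $\kappa_!(b,I)$ at $t$, for the morphism $\kappa\colon\mathcal{K}_m^{\plusdots}\to\mathcal{K}_m^{\specialdots}$ from the proof of Lemma~\ref{lemmapointed}.
\item Theorem~\ref{theoremformulaleftderived} computes $\mathbb{L}\kappa_!$, which is a homotopy colimit, whereas $\Phi^\circ(b)$ only sees the strict colimit. To identify the two, you would need every component of the classifier over $t$ to have a \emph{terminal} object.
\end{itemize}

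Neither of your suggested substitutes supplies this. Initial objects are useless here: a span has an initial object, and pushouts are not homotopy invariant. Quasi-tameness, i.e.\ discreteness of the fundamental groupoid of $T^{T+1}$, is the input for Theorem~\ref{theoremleftproper}, not for homotopy invariance of a strict colimit. Indeed, the classifier in Lemma~\ref{lemmaquasitame} has initial, not terminal, objects.

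Terminal objects are not evident either. For $m\geq 2$, take the natural candidate $\tau\in\mathtt{UCG}$: one vertex of each colour $(\epsilon,l)\in\mathfrak{S}^{m-1}$, placed on the $\epsilon$ side of $t$ with weight $l$. Consider the configuration with a single $(-1,1)$-vertex before $t$ with weight $1$. It has two distinct morphisms to $\tau$:
\begin{itemize}
\item send that vertex to the $(-1,1)$-vertex of $\tau$; or
\item first apply the vertical morphism moving it after $t$ with weight $2$, then send it to the $(1,2)$-vertex via the unary operation $(-1,1)\to(1,2)$.
\end{itemize}
So the colimit is not simply evaluation at $\tau$.

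Your fallback is circular. If $\tilde b\to b$ is a cofibrant replacement, two-out-of-three only moves the problem to $\Phi^\circ(\tilde b)\to\Phi^\circ(b)$. That is again a weak equivalence with non-cofibrant target, and the behaviour of $U$ on cofibrant objects says nothing about it. This is not a side case: Lemma~\ref{lemmarectification} is used in Lemma~\ref{lemmapointed} precisely to compare $D^0(\mathcal{K}_m)$ with the non-cofibrant constant object $\mathcal{K}_m\in\mathbb{B}$.

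The missing ingredient is therefore a proof that the strict enveloping monoid is homotopy invariant on all of $\mathbb{B}$. This is exactly the relative left properness statement the paper imports from the cited theorem. Without it, or that citation, your argument does not establish the lemma.
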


\begin{proof}
	For any $b \in \mathbb{B}$, $\Phi^\circ(b)$ is a category of presheaves, so we can apply for example \cite[Theorem 3.17 (1)]{bwdgrothendieck}.
\end{proof}

Let $I$ be the unit of $\mathcal{M}$, which is given by the collection $(1,0,0,\ldots)$, since $\mathcal{M}$ is the category of right $\mathcal{K}_m$-modules.

\begin{lemma}\label{lemmapointed}
	There is a Quillen equivalence between the category $\Phi(b)$ and the category of elements $X \in \Phi^\circ(\mathcal{K}_m)$ together with a morphism $I \to X$.% pointed equipped with a morphism $I \to X_0$.
%	The category of $\Phi(\mathcal{K}_m)$ is equivalent to the category of elements $X \in \Phi^\circ(\mathcal{K}_m)$ which are \emph{pointed}, that is equipped with a morphism from the unit of $\mathcal{M}$ to $X$.% pointed equipped with a morphism $I \to X_0$.
\end{lemma}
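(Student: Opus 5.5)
The comparison goes through the combinatorial difference between $\mathtt{PCG}$ and $\mathtt{UCG}$. Operations of $\mathcal{K}_m^\fivedots$ with target colour $t$ may have zero, one or several inputs coloured $t$. The condition in the definition of $\mathtt{PCG}$ allows at most one input coloured $t$. Indeed, if $c_i=c_j=t$ then neither $c_i\leq(-1,\mu_{ij})$ nor $c_j\leq(1,\mu_{ij})$ can hold, since $t$ is the maximal element of $\mathfrak{D}^m$. So $\mathtt{PCG}$ differs from $\mathtt{UCG}$ exactly by the operations with target $t$ and no input coloured $t$. These are the $k$-ary operations $(\mu,c_1,\ldots,c_k,t)$ with all $c_i\in\mathfrak{S}^{m-1}$ satisfying the $\mathtt{PCG}$ condition. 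The first step is to show that, for fixed $b$, these extra operations act on the $t$-component only through a single map from a "free" object into $X$. Concretely, I would show that a $\mathcal{K}_m^\fivedots$-structure on $(b,X)$ is the same as a $\mathcal{K}_m^{\specialdots}$-structure on $(b,X)$ together with a morphism $\eta\colon Q(b)\to X$ in $\Phi^\circ(b)$. Here $Q(b)$ is the $\mathcal{K}_m^{\specialdots}$-algebra over $b$ generated by these operations. The operations with no $t$-input are obtained by substituting them into the unique $t$-slot of the operations in $\mathtt{UCG}$, so $Q(b)$ is the value at $t$ of the left adjoint $\Phi^\circ(b)\to\Phi(b)$ applied to the initial object. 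Equivalently, $\Phi(b)\cong Q(b)/\Phi^\circ(b)$, and this is the standard description of the extra nullary-type structure as a coslice.

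Next I specialise to $(b,X)=D^0(\mathcal{K}_m)$, as in Lemma~\ref{lemmaquillenreflection}, and compare $Q(b)$ with the unit. Since $D^0(\mathcal{K}_m)\to\mathcal{K}_m$ is a weak equivalence, Lemma~\ref{lemmarectification} gives a Quillen equivalence $\Phi^\circ(b)\simeq\Phi^\circ(\mathcal{K}_m)$. Under this equivalence, $Q(b)$ is sent to the corresponding generated object over $\mathcal{K}_m$. I then check that this object is weakly equivalent to $I$, viewed as a $\mathcal{K}_m^{\specialdots}$-algebra in $\Phi^\circ(\mathcal{K}_m)$, that is, to the free one on the unit. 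The argument uses that the restriction of $\mathcal{K}_m^\fivedots$ to colours coloured over $b=\mathcal{K}_m$ contracts. I would do this with the classifier description of Lemma~\ref{lemmacomputationclassifier}, using the contractibility of the categories $\chi(d,y)$ from the proof of Lemma~\ref{lemmasmooth} to identify the homotopy colimit given by Theorem~\ref{theoremformulaleftderived} with that of the unit.

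Finally, coslices over weakly equivalent cofibrant objects in a left proper model category are Quillen equivalent. Left properness is available here by Theorem~\ref{theoremleftproper}, which applies because $\Phi^\circ(\mathcal{K}_m)$ is a category of algebras over a presheaf-type monad, similarly to Lemma~\ref{lemmaleftproper}. Composing with the Quillen equivalence of Lemma~\ref{lemmarectification}, which carries coslices to coslices, gives the Quillen equivalence between $\Phi(b)$ and pointed objects $I\to X$ in $\Phi^\circ(\mathcal{K}_m)$.

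The main obstacle is the second step: showing that the object freely generated by the operations with no $t$-input is weakly equivalent to $I$, rather than just identifying the two formally. I would first try to exhibit an explicit contraction, modelled on the recentring maps $r_t$ of Lemma~\ref{lemmaequivalenceoperads}, before falling back on the classifier computation.
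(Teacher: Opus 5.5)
You have the paper's architecture. Your $Q(b)$ is the paper's $\alpha$, the image under $U\colon\Phi(b)\to\Phi^\circ(b)$ of the initial object. The paper likewise uses $\Phi(b)\cong\alpha/\Phi^\circ(b)$, compares $\alpha$ with the free object $\gamma$ on $I$, applies Rezk's coslice result in a left proper category, and changes base with Lemma \ref{lemmarectification}. Your observation that the $\mathtt{PCG}$ condition forbids two $t$-coloured inputs is correct, and it is why the coslice description holds.

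However, the step you flag as the main obstacle is the whole content of the lemma, and the tool you propose for it does not work. Contractibility of $\chi(d,y)$ concerns varying colours on a fixed graph; it is what makes the colour-forgetting functor of Lemma \ref{lemmasmooth} smooth. On the $\alpha$ side, colour-forgetting cannot succeed. Here only graphs with no $t$-coloured vertex occur, so the fibre over the one-vertex graph with target $t$ is $\mathfrak{S}^{m-1}$, whose nerve is $S^{m-1}$. Note that $\chi(d,y)$ allows the colour $t$, so its contractibility says nothing about this.

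The missing idea is to relate graphs without a $t$-vertex to graphs with exactly one. By Theorem \ref{theoremformulaleftderived}, $\alpha$ and $\gamma$ are homotopy colimits over two classifiers. For $\alpha$ it is the classifier $\mathcal{Y}$ of $\iota\colon\mathcal{K}_m^{\diamonddots}\to\mathcal{K}_m^{\fivedots}$; for $\gamma$ it is the classifier $\mathcal{X}$ of $\kappa\colon\mathcal{K}_m^{\plusdots}\to\mathcal{K}_m^{\specialdots}$. Both are full subcategories of the classifier of Lemma \ref{lemmacomputationclassifier}: $\mathcal{Y}$ on $\mathtt{PCG}$-objects with no $t$-coloured input, $\mathcal{X}$ on $\mathtt{UCG}$-objects. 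The functor $F\colon\mathcal{X}\to\mathcal{Y}$ deleting the unique $t$-vertex induces $\gamma\to\alpha$, since the $t$-slot carries $I$. One must then show that every $y/F$ is contractible. Its objects are, roughly, the ways of inserting a $t$-vertex into $y$: a position, plus weights on the new edges satisfying $d_j\le(-1,\mu_{j*})$ for $j$ before the new vertex and $d_j\le(1,\mu_{*j})$ for $j$ after it. The recentring maps $r_t$ are the geometric shadow of this, but they live in $\mathcal{C}_m^{\fivedots}$ and give no statement about these classifiers.

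The order of your steps is also problematic. You transport $Q(b)$ along Lemma \ref{lemmarectification} first and assert that it becomes the object generated over $\mathcal{K}_m$. A priori there is only a comparison map, because the base-change left adjoint need not commute with $U$. Moreover, over the non-cofibrant constant $\mathcal{K}_m$, the strictly generated object is not the homotopy colimit of Theorem \ref{theoremformulaleftderived}, which computes $\mathbb{L}\iota_!$.

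The paper avoids this by working over the cofibrant $b=D^0(\mathcal{K}_m)$. It obtains $\gamma\to\alpha$ there and applies Rezk in $\Phi^\circ(b)$ to get $\alpha/\Phi^\circ(b)\simeq\gamma/\Phi^\circ(b)$. Only then does it change base. This is harmless for $\gamma$: it is free on $I$, and the forgetful functors to $\mathcal{M}$ commute with restriction. So the base-change left adjoint sends $\gamma$ to the free object on $I$ over $\mathcal{K}_m$, whose coslice is exactly the category of $X\in\Phi^\circ(\mathcal{K}_m)$ with a map $I\to X$.
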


\begin{proof}
	We follow the same arguments as in the proof of \cite[Lemma 4.17]{deleger}. 
	First note that there is an obvious inclusion map of polynomial $2$-monads $\mathcal{K}_m^{\specialdots} \to \mathcal{K}_m^{\fivedots}$ which induces a forgetful functor $U: \Phi(b) \to \Phi^\circ(b)$. %Similarly to the situation in the proof of \cite[Lemma 3.20]{delegergrego}, l
	Let $\alpha \in \Phi^\circ(b)$ be the image of the initial object in $\Phi(b)$ through $U$. As in \cite[Lemma 3.20]{delegergrego}, the category $\Phi(b)$ is isomorphic to $\alpha/\Phi^\circ(b)$. Let $V: \Phi^\circ(b) \to \mathcal{M}$ be the forgetful functor and $\gamma \in \Phi^\circ(b)$ the image of $I$ through the left adjoint of $V$. We want to prove that there is a weak equivalence $\gamma \to \alpha$.
	
	Fortunately both $\alpha$ and $\gamma$ can be computed using classifiers. Firstly, $\alpha$ can be computed using the classifier associated to the inclusion morphism of polynomial $2$-monads $\iota: \mathcal{K}_m^{\diamonddots} \to \mathcal{K}_m^{\fivedots}$. Indeed, $\int \Phi$ is the category of algebras over $\mathcal{K}_m^{\fivedots}$ and the initial object in $\Phi(b)$ is given by $\iota_!(b)$, since $\iota_!$ is the left adjoint of the restriction functor $\iota^*: \int \Phi \to \mathbb{B}$ given by projection to the base. Secondly, $\gamma$ can be computed using the classifier associated to the inclusion morphism of polynomial $2$-monads $\kappa: \mathcal{K}_m^{\plusdots} \to \mathcal{K}_m^{\specialdots}$. Here, $\mathcal{K}_m^{\plusdots}$ is the polynomial $2$-monad given by the polynomial
	\[
		\xymatrix{
			\mathrm{ob}(\mathfrak{D}^m) & \mathtt{SCG}^* \ar[l] \ar[r] & \mathtt{SCG} \ar[r] & \mathrm{ob}(\mathfrak{D}^m)
		}
	\]
	where $\mathtt{SCG}$ is the full subcategory of objects $(\mu,c_1,\ldots,c_k,c) \in \mathtt{CCG}$ such that if $c=t$, then $k=1$, $\mu$ is the unique object of $\mathtt{CG}(1)$ and $c_1=t$. This time, $\int \Phi^\circ$ is the category of algebras over $\mathcal{K}_m^{\specialdots}$ and $\kappa^*(b,X)=(b,V(X))$, so $\kappa_!(b,I) = (b,\gamma)$.
	
	To simplify the notations, let $\mathcal{X}$ and $\mathcal{Y}$ be the classifiers associated to $\kappa$ and $\iota$ respectively. Both $\mathcal{X}$ and $\mathcal{Y}$ are actually full subcategories of the classifier associated to the morphism of polynomial $2$-monads \eqref{equationmappolymon} which was described in Lemma \ref{lemmacomputationclassifier}. $\mathcal{X}$ is the full subcategory of objects $(\mu,c_1,\ldots,c_k,c) \in \mathtt{PCG}$ such that if $c = t$, then there is exactly one $i \in \{1,\ldots,k\}$ such that $c_i = t$. $\mathcal{Y}$ is the full subcategory of objects $(\mu,c_1,\ldots,c_k,c) \in \mathtt{PCG}$ such that for all $i \in \{1,\ldots,k\}$, $c_i \neq t$. There is a functor $F: \mathcal{X} \to \mathcal{Y}$ which removes the unique $i$ such that $c_i=t$. In other words, it sends $\mu \in \mathtt{CG}(k)$ to its restriction to $\{1,\ldots,k\} \backslash \{i\}$. %$F$ has a left adjoint $G$ defined as follows. 
	
	Let us sketch a proof that $y/F$ has a contractible nerve. For $y = (\nu,d_1,\ldots,d_l,t) \in \mathcal{Y}$, an object in $y/F$ is given by $(\mu,c_1,\ldots,c_{l+1},t) \in \mathcal{X}$ together with $i \in \{1,\ldots,l+1\}$ and a map $\nu \to \mu'$, where $\mu'$ is the restriction of $\mu$ to $\{1,\ldots,l+1\} \backslash \{i\}$. The morphisms are just morphisms in $\mathcal{X}$. For example, if $m=2$, and let $A,B,C,D,E$ corresponding to the objects of $\mathfrak{D}^2$ as in Remark \ref{remarkoct}. Let $y=(\nu,B,A,E)$, with $\nu_{12}=2$. Let us draw $(\mu,c_1,c_2,c_3,E) \in \mathcal{X}$ as follows:
	\[
		\begin{tikzpicture}[scale=.6]
		\draw[fill] (0,1.732) circle (1.2pt) node[above]{$c_2$};
		\draw[fill] (-1,0) circle (1.2pt) node[below]{$c_1$};
		\draw[fill] (1,0) circle (1.2pt) node[below]{$c_3$};
		\draw (0,0) node[below]{$\mu_{13}$};
		\draw (-.5,.76) node[above left]{$\mu_{12}$};
		\draw (.5,.76) node[above right]{$\mu_{23}$};
		\draw (-1,0) -- (1,0) -- (0,1.732) -- (-1,0);
		\end{tikzpicture}
	\]
	Then $y/F$ is the following category:%$\dagger$
	\[
	\begin{tikzpicture}[scale=.6]
	\begin{scope}[shift={(-8,0)}]
	\draw[fill] (0,1.732) circle (1.2pt) node[above]{$B$};
	\draw[fill] (-1,0) circle (1.2pt) node[below]{$E$};
	\draw[fill] (1,0) circle (1.2pt) node[below]{$A$};
	\draw (0,0) node[below]{$2$};
	\draw (-.5,.76) node[above left]{$2$};
	\draw (.5,.76) node[above right]{$2$};
	\draw (-1,0) -- (1,0) -- (0,1.732) -- (-1,0);
	\end{scope}
	
	\draw (-6,.95) node{$\longleftarrow$};
	
	\begin{scope}[shift={(-4,0)}]
	\draw[fill] (0,1.732) circle (1.2pt) node[above]{$B$};
	\draw[fill] (-1,0) circle (1.2pt) node[below]{$E$};
	\draw[fill] (1,0) circle (1.2pt) node[below]{$A$};
	\draw (0,0) node[below]{$2$};
	\draw (-.5,.76) node[above left]{$1$};
	\draw (.5,.76) node[above right]{$2$};
	\draw (-1,0) -- (1,0) -- (0,1.732) -- (-1,0);
	\end{scope}
	
	\draw (-2,.95) node{$\longrightarrow$};
	
	\draw[fill] (0,1.732) circle (1.2pt) node[above]{$E$};
	\draw[fill] (-1,0) circle (1.2pt) node[below]{$B$};
	\draw[fill] (1,0) circle (1.2pt) node[below]{$A$};
	\draw (0,0) node[below]{$2$};
	\draw (-.5,.76) node[above left]{$2$};
	\draw (.5,.76) node[above right]{$2$};
	\draw (-1,0) -- (1,0) -- (0,1.732) -- (-1,0);
	
	\draw (2,.95) node{$\longleftarrow$};
	
	\begin{scope}[shift={(4,0)}]
	\draw[fill] (0,1.732) circle (1.2pt) node[above]{$A$};
	\draw[fill] (-1,0) circle (1.2pt) node[below]{$B$};
	\draw[fill] (1,0) circle (1.2pt) node[below]{$E$};
	\draw (0,0) node[below]{$2$};
	\draw (-.5,.76) node[above left]{$2$};
	\draw (.5,.76) node[above right]{$1$};
	\draw (-1,0) -- (1,0) -- (0,1.732) -- (-1,0);
	\end{scope}
	
	\draw (6,.95) node{$\longrightarrow$};
	
	\begin{scope}[shift={(8,0)}]
	\draw[fill] (0,1.732) circle (1.2pt) node[above]{$A$};
	\draw[fill] (-1,0) circle (1.2pt) node[below]{$B$};
	\draw[fill] (1,0) circle (1.2pt) node[below]{$E$};
	\draw (0,0) node[below]{$2$};
	\draw (-.5,.76) node[above left]{$2$};
	\draw (.5,.76) node[above right]{$2$};
	\draw (-1,0) -- (1,0) -- (0,1.732) -- (-1,0);
	\end{scope}
	\end{tikzpicture}
	\]
	For any $y \in \mathcal{Y}$, $y/F$ is non-empty and any two elements in this category are connected by a zigzag as above. There are no loops because between any two elements there is always a shortest possible zigzag.

	According to Theorem \ref{theoremformulaleftderived}, $\gamma$ and $\alpha$ can be computed as homotopy colimits over the classifiers $\mathcal{X}$ and $\mathcal{Y}$ respectively. The map $\xi: \gamma \to \alpha$ is induced by $F: \mathcal{X} \to \mathcal{Y}$. Since $y/F$ has a contractible nerve for all $y \in \mathcal{Y}$, $F$ preserves homotopy limits, which means that $\xi$ is indeed a weak equivalence.
	
	Let us finish by gathering all the arguments together. The category $\Phi(b)$ is isomorphic to $\alpha/\Phi^\circ(b)$. Since $\Phi^\circ(b)$ is a category of presheaves, is it left proper. So, according to \cite[Proposition 2.7]{rezk}, there is a Quillen equivalence between $\alpha/\Phi^\circ(b)$ and $\gamma/\Phi^\circ(b)$. Finally, using Lemma \ref{lemmarectification}, there is a Quillen equivalence between $\gamma/\Phi^\circ(b)$ and $\gamma/\Phi^\circ(\mathcal{K}_m)$.
\end{proof}

\begin{lemma}\label{lemmafibrationsequence}
	For any %$b \in \mathbb{B}$ and any 
	map $X \to Y$ in $\Phi(b)$, there is a fibration sequence
	\[
	\Phi(b)^\mathsf{h}(X,Y) \to \Phi^\circ(\mathcal{K}_m)^\mathsf{h}(X,Y) \to Y_0.
	\]
\end{lemma}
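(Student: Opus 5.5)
The plan is to combine Lemma \ref{lemmapointed} with the standard fibration sequence for mapping spaces in an under-category. By Lemma \ref{lemmapointed}, $\Phi(b)$ is Quillen equivalent to the category $I/\Phi^\circ(\mathcal{K}_m)$ of objects $X \in \Phi^\circ(\mathcal{K}_m)$ equipped with a morphism $I \to X$ in $\mathcal{M}$. Equivalently, since $\gamma$ is the image of $I$ under the left adjoint of $V: \Phi^\circ(\mathcal{K}_m) \to \mathcal{M}$, this is the under-category $\gamma/\Phi^\circ(\mathcal{K}_m)$. Derived mapping spaces are invariant under Quillen equivalences. So $\Phi(b)^\mathsf{h}(X,Y)$ is weakly equivalent to $(\gamma/\Phi^\circ(\mathcal{K}_m))^\mathsf{h}(X,Y)$, where $X$ and $Y$ are now viewed as $\gamma$-pointed objects.

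Next I will use the general fact about under-categories. Let $\mathcal{N}$ be a model category, $\gamma$ a cofibrant object, and $\gamma \to X \to Y$ maps with $X$ cofibrant and $Y$ fibrant. Then there is a homotopy fibre sequence
\[
(\gamma/\mathcal{N})^\mathsf{h}(X,Y) \to \mathcal{N}^\mathsf{h}(X,Y) \to \mathcal{N}^\mathsf{h}(\gamma,Y),
\]
where the second map is precomposition with $\gamma \to X$ and the fibre is taken over the given point $\gamma \to Y$. To prove it, I replace $\gamma \to X$ by a cofibration. I then compute mapping spaces via cosimplicial frames, or via simplicial enrichment if $\Phi^\circ(\mathcal{K}_m)$ is a simplicial model category, which is reasonable since it is a presheaf category in $\mathrm{Cat}$-valued diagrams. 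Restriction along a cofibration between cofibrant objects is a Kan fibration of mapping spaces into a fibrant $Y$. Its strict fibre over the point $\gamma \to Y$ is precisely the mapping space in the under-category, which gives the sequence.

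Finally, I identify the base. By adjunction, $\Phi^\circ(\mathcal{K}_m)^\mathsf{h}(\gamma,Y) \simeq \mathcal{M}^\mathsf{h}(I,V(Y))$, using that $V$ is right Quillen and that $I$ is cofibrant in right $\mathcal{K}_m$-modules. Here $\mathcal{M}$ is a category of presheaves and $I=(1,0,0,\ldots)$ is the representable-like object concentrated in arity $0$. Maps $I \to V(Y)$ therefore correspond to points of $Y(0)=Y_0$, so $\mathcal{M}^\mathsf{h}(I,V(Y)) \simeq Y_0$ (more precisely its nerve). Substituting this gives the fibration sequence of the statement.

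The step I expect to be the main obstacle is the first one: making sure the Quillen equivalence of Lemma \ref{lemmapointed} carries the given $X \to Y$ in $\Phi(b)$ to a map of pointed objects in $\Phi^\circ(\mathcal{K}_m)$ with the same underlying $X$ and $Y$, so that the middle term really is $\Phi^\circ(\mathcal{K}_m)^\mathsf{h}(X,Y)$. This requires tracking the chain of equivalences $\Phi(b) \cong \alpha/\Phi^\circ(b) \simeq \gamma/\Phi^\circ(b) \simeq \gamma/\Phi^\circ(\mathcal{K}_m)$. Each functor in the chain is either a restriction along a weak equivalence ($\gamma \to \alpha$, or $b \to \mathcal{K}_m$) or is induced by one. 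The right adjoints preserve underlying objects up to weak equivalence, and this preservation is what justifies abusing notation for $X$ and $Y$.
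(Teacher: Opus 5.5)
Your proposal is correct and follows the same route as the paper, which proves the lemma in one line by combining Lemma \ref{lemmapointed} with \cite[Theorem 4.14]{deleger}. You have in effect reproved that cited fibration sequence yourself, via the standard fibre sequence $(\gamma/\mathcal{N})^\mathsf{h}(X,Y)\to\mathcal{N}^\mathsf{h}(X,Y)\to\mathcal{N}^\mathsf{h}(\gamma,Y)$ and the identification $\mathcal{N}^\mathsf{h}(\gamma,Y)\simeq\mathcal{M}^\mathsf{h}(I,V(Y))\simeq Y_0$, using that $I$ is the free right $\mathcal{K}_m$-module on a point in arity $0$; the paper makes the same abuse of notation for $X$ and $Y$ along the chain of Quillen equivalences that you flag.
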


\begin{proof}
	It is a direct consequence of Lemma \ref{lemmapointed} %applied to $b=\mathcal{K}_m$ 
	and \cite[Theorem 4.14]{deleger}.
\end{proof}

\subsection{Equivalences of categories}

% in a symmetric monoidal category $(\mathcal{V},\otimes,v)$. Recall that a \emph{left $\mathcal{A}$-module} is given by a collection $\mathcal{C} := (\mathcal{C}(k))_{k \geq 0}$ of objects in $\mathcal{V}$ together with maps

Let $\mathcal{A}$ and $\mathcal{B}$ be two (one-coloured) symmetric operads in a symmetric monoidal category $(\mathcal{V},\otimes,v)$. Let $\mathcal{C}$ and $\mathcal{D}$ be two $\mathcal{A}-\mathcal{B}$-bimodules. Recall \cite[Definition 3.2]{deleger} that an \emph{infinitesimal $\mathcal{C}-\mathcal{D}$-bimodule} $\mathcal{E}$ is given by a collection $\mathcal{E} := (\mathcal{E}(k))_{k \geq 0}$ of objects in $\mathcal{V}$ together with maps
\begin{equation}\label{equationleftaction}
	\mathcal{A}(k) \times \mathcal{C}(n_1) \times \ldots \times \mathcal{C}(n_{i-1}) \times \mathcal{E}(n_i) \times \mathcal{D}(n_{i+1}) \times \ldots \times \mathcal{D}(n_k) \to \mathcal{E}(n_1+\ldots+n_k)
\end{equation}
%\[
%	A_k \times C_{n_1} \times \ldots \times C_{n_{i-1}} \times E_{n_i} \times D_{n_{i+1}} \times \ldots \times D_{n_k} \to E_{n_1+\ldots+n_k}
%\]
and
\[
	\mathcal{E}(k) \times \mathcal{B}(n_1) \times \ldots \times \mathcal{B}(n_k) \to \mathcal{E}(n_1+\ldots+n_k)
\]
satisfying axioms. Also recall that an \emph{infinitesimal $\mathcal{A}$-bimodule} $\mathcal{E}$ is given by a collection $(\mathcal{E}(k))_{k \geq 0}$ of objects in $\mathcal{V}$ together with maps
\[
	\circ_i : \mathcal{A}(k) \otimes \mathcal{E}(n) \to \mathcal{E}(k+n-1)
\]
and
\[
	\bullet_i : \mathcal{E}(k) \otimes \mathcal{A}(n) \to \mathcal{E}(k+n-1),
\]
for $1 \leq i \leq k$, satisfying axioms.

\begin{lemma}\label{lemmaphikm}
	The category $\Phi^\circ(\mathcal{K}_m)$ is equivalent to the category of infinitesimal $\mathcal{K}_m$-bimodules.
\end{lemma}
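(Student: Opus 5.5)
The plan is to unwind the definitions on both sides and match the structure maps one by one, as in the analogous statement \cite[Lemma 4.16]{deleger} for the non-symmetric case. First, I make explicit what an object of $\Phi^\circ(\mathcal{K}_m)$ is. Here $\mathcal{K}_m \in \mathbb{B}$ is the $\mathcal{K}_m^{\diamonddots}$-algebra in $\mathcal{M}$ obtained by restriction along $pf$. Thus it assigns the right $\mathcal{K}_m$-module $\mathcal{K}_m$ to every colour of $\mathfrak{S}^{m-1}$, and all the structure maps come from operad composition in $\mathcal{K}_m$. An object of $\Phi^\circ(\mathcal{K}_m)$ is then a right $\mathcal{K}_m$-module $X$, placed at the colour $t$. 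It is equipped with action maps for the operations of $\mathcal{K}_m^{\specialdots}$ with target $t$, which by definition of $\mathtt{UCG}$ have exactly one input coloured $t$. These actions must be compatible with the given actions on the other colours.

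Second, I reduce these action maps to the data of an infinitesimal bimodule. By the analogue of Proposition \ref{propositionequivalencealgebras} for $\mathcal{K}_m^{\specialdots}$, an operation with target $t$ is an element $(\mu,\sigma) \in \mathcal{K}_m(k)$ with exactly one source colour $c_i=t$, and the other colours in $\mathfrak{S}^{m-1}$ are constrained by the $\mathtt{PCG}$ condition. Since every input other than the $i$-th is fed the module $\mathcal{K}_m$, the colours of those inputs carry no extra data. The action map has the form
\[
\mathcal{K}_m(k) \otimes \mathcal{K}_m \otimes \cdots \otimes X \otimes \cdots \otimes \mathcal{K}_m \to X
\]
in $\mathcal{M}$, where $\otimes$ is the convolution product. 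Using the adjunction between $\otimes$ in right modules and the right action, this map is determined by its restriction along the unit $I \to \mathcal{K}_m$ at the non-$t$ slots. What remains is a map $\mathcal{K}_m(k) \otimes X \to X$ in $\mathcal{M}$. Unfolding the convolution arity by arity, and combining it with the right $\mathcal{K}_m$-module structure of $X$, this yields exactly the infinitesimal left actions $\circ_i:\mathcal{K}_m(k)\otimes X(n)\to X(k+n-1)$, while the module structure of $X$ in $\mathcal{M}$ gives the right actions $\bullet_i$. The role of the colour constraints is to make these choices consistent. For a given $(\mu,\sigma)$ and position $i$, the valid colourings of the other inputs form a category with a terminal object (compare the fibres $F_y$ in the proof of Theorem \ref{theoremcofinality}). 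The unary morphisms in $\mathfrak{D}^m$ act on the algebra $\mathcal{K}_m$ by identities, so all colourings produce the same map.

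Third, I check the axioms and then functoriality. Associativity of the $\mathcal{K}_m^{\specialdots}$-algebra structure, restricted to composites whose $t$-coloured input is fed by another $t$-operation, gives the associativity of the $\circ_i$. Compatibility with right module maps gives the mixed axioms. Equivariance comes from the permutation morphisms in $\mathtt{UCG}$. Conversely, given an infinitesimal bimodule, I define the action of $(\mu,\sigma)$ with $c_i=t$ by composing the other slots into $\mathcal{K}_m$ and then applying $\circ_i$. Morphisms correspond tautologically on both sides, so the two constructions give inverse functors.

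The main obstacle is the second step. I need to verify that the $\mathtt{PCG}$ constraint does not remove any operations needed to produce every $\circ_i$, nor add relations beyond the infinitesimal bimodule axioms. Concretely, for each $(\mu,\sigma)$ and each $i$, I must show there is at least one admissible colouring of the remaining inputs. I would try choosing $c_j=(-1,1)$ or $(1,1)$ according to the sign $s_{ij}$, since $(\pm1,1)$ is below every $(\pm1,\mu_{ij})$ with matching sign. I also need to show that the space of admissible colourings is connected, using the terminal-object argument above. If that fails, I would fall back on the contractibility statement for $\chi(d,y)$ from the proof of Lemma \ref{lemmasmooth}, which gives connectedness directly.
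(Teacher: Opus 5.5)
Your plan follows the paper's route. You unwind a $\mathcal{K}_m^{\specialdots}$-structure on $(\mathcal{K}_m,X)$ into actions indexed by $(\mu,c_1,\ldots,c_k,t)$ with $c_i=t$, and match these with left actions indexed by $\mu$. You then use that unary operations of $\mathfrak{D}^m$ act by identities on the constant algebra $\mathcal{K}_m$, so comparable colourings give the same map.

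However, your claim that the admissible colourings for fixed $\mu,i$ form a category with a terminal object is false. The fibres $F_y$ are also the wrong comparison, since they carry no $\mathtt{PCG}$ constraint. Take $m=2$, $k=3$, $\sigma=\mathrm{id}$, $c_3=t$, $\mu_{12}=1$, $\mu_{13}=\mu_{23}=2$. Admissibility then means $c_1,c_2\le(-1,2)$ and ($c_1=(-1,1)$ or $c_2=(1,1)$). So $((-1,1),(-1,2))$ and $((-1,2),(1,1))$ are distinct maximal admissible colourings, and their pointwise bound $((-1,2),(-1,2))$ violates the condition on the pair $12$.

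Your fallback is therefore the actual proof. Remove the slot $i$. The admissible colourings are then exactly $\chi(d,y)$, with $y$ the restriction of $\mu$, $d_j=(-1,\mu_{ji})$ for $j<i$ and $d_j=(1,\mu_{ij})$ for $j>i$. These $d_j$ satisfy the sign hypothesis $\epsilon_j\le\epsilon_{j'}$, so the contractibility established in Lemma \ref{lemmasmooth} gives the connectedness you need. Your level-one colouring is always admissible, so this category is non-empty.

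This is precisely where the paper's own proof is too quick. It defines the left action using the colouring $d$ itself and asserts that this lies in $\mathtt{UCG}$. In the example above $d=((-1,2),(-1,2))$, which is not admissible. Your choice of colouring plus connectedness is the sound version of that step.

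There is a smaller slip in your second step. The monoidal unit $I=(1,0,0,\ldots)$ maps into $\mathcal{K}_m(0)$, so restricting along it would discard the inputs from $\mathcal{K}_m(n_j)$ with $n_j\ge 1$. What you want instead is that $\mathcal{K}_m$ is the free right $\mathcal{K}_m$-module on $\mathrm{id}\in\mathcal{K}_m(1)$. A right-module map out of $\mathcal{K}_m^{\otimes(i-1)}\otimes X\otimes\mathcal{K}_m^{\otimes(k-i)}$ is then determined by inserting $\mathrm{id}$ in the non-$t$ slots. What remains is the family $\mathcal{K}_m(k)\times X(n)\to X(n+k-1)$, not a right-module map $\mathcal{K}_m(k)\otimes X\to X$.

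The paper avoids this bookkeeping. It first identifies $\Phi^\circ(\mathcal{K}_m)$ with infinitesimal $\mathcal{K}_m$-$\mathcal{K}_m$-bimodules in the sense of \cite[Definition 3.2]{deleger}, whose left actions have exactly the convolution shape. It then cites \cite[Lemma 3.3]{deleger}. Your converse construction, composing the other slots into $\mathcal{K}_m$ and then applying $\circ_i$, is the content of that lemma.
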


\begin{proof}
	We will prove that the category $\Phi^\circ(\mathcal{K}_m)$ is equivalent to the category of infinitesimal $\mathcal{K}_m-\mathcal{K}_m$-bimodules in the sense of \cite[Definition 3.2]{deleger}. The conclusion follows from \cite[Lemma 3.3]{deleger}, which says that we recover the classical notion of infinitesimal $\mathcal{K}_m$-bimodules in this case.
	
	An object of $\Phi^\circ(\mathcal{K}_m)$ is given by a right $\mathcal{K}_m$-module $X$ together with the structure of a $\mathcal{K}_m^{\specialdots}$-algebra on the pair $(\mathcal{K}_m,X)$. Let us unpack the definition of a $\mathcal{K}_m^{\specialdots}$-algebra. For $(\mu,c_1,\ldots,c_k,c) \in \mathtt{UCG}$ where $c=t$, there is exactly one $i \in \{1,\ldots,k\}$ such that $c_i=t$. This induces a map
	\[
		\underbrace{\mathcal{K}_m \otimes \ldots \otimes \mathcal{K}_m}_{i-1} \otimes X \otimes \underbrace{\mathcal{K}_m \otimes \ldots \otimes \mathcal{K}_m}_{k-i} \to X
	\]
	in the category of right $\mathcal{K}_m$-modules. Since the tensor product is given by convolution, we get functors
	\begin{equation}\label{equationconvolution}
		\mathcal{K}_m(n_1) \times \ldots \times X(n_i) \times \ldots \times \mathcal{K}_m(n_k) \to X(n_1+\ldots+n_k).
	\end{equation}
	
	On the other hand, an infinitesimal $\mathcal{K}_m-\mathcal{K}_m$-bimodule is given by a right $\mathcal{K}_m$-module $X$ together with left actions as in \eqref{equationleftaction}. In our case, $\mathcal{A}(k)=\mathcal{K}_m(k)$, so we get functors as in \eqref{equationconvolution} but this time for any object $\mu \in \mathcal{K}_m(k)$. It remains to prove that the left actions are equivalent.
	
	First, let $X \in \Phi^\circ (\mathcal{K}_m)$. Let us construct a left action as in \eqref{equationleftaction}. As we know from Proposition \ref{propositionequivalencealgebras}, $\mathcal{K}_m$ can be seen both as categorical operad and a polynomial $2$-monad. So $\mu \in \mathcal{K}_m(k)$ is given by an object in $\mathtt{CG}(k)$. Let $i \in \{1,\ldots,k\}$. For $j \in \{1,\ldots,k\}$, let $c_j=(-1,\mu_{ji})$ if $j<i$, $c_j=t$ if $j=i$ and $c_j=(1,\mu_{ij})$ is $j>i$.  otherwise. This gives an object $(\mu,c_1,\ldots,c_k,t) \in \mathtt{UCG}$ and we can defined the left action map as the one induced by this object. Note that if $c'_1,\ldots,c'_k$ are such that $c'_i=t$ and $(\mu,c'_1,\ldots,c'_k,t) \in \mathtt{UCG}$, then $c'_j \leq c_j$ for all $j \in \{1,\ldots,k\}$.
	
	On the other hand, if $X$ is an infinitesimal $\mathcal{K}_m-\mathcal{K}_m$-bimodule and $(\mu,c_1,\ldots,c_k,t) \in \mathtt{UCG}$, then we can take the functor \eqref{equationleftaction} associated to $\mu$.
	
	We obtain two functors inverse of each other. The fact that they are inverse comes from the fact that $\mathcal{K}_m \in \mathbb{B}$ is actually given by the constant object $\mathcal{K}_m \in \mathcal{M}$, which means that for fixed $\mu \in \mathtt{CG}$, the left action \eqref{equationleftaction} is the same for any $(c_1,\ldots,c_k)$ such that $(\mu,c_1,\ldots,c_k,t) \in \mathtt{UCG}$.% Note that for any $i \in \ldots \{1,\ldots,k\}$, $(\mu,c_1,\ldots,c_k,t) \in \mathtt{UCG}$ and $(\mu,c'_1,\ldots,c'_k,t) \in \mathtt{UCG}$, such that $c_i=c'_i=t$, there is $(\mu,c''_1,\ldots,c''_k,t) \in \mathtt{UCG}$ such that $c_i \leq c''_i$ and $c'_i \leq c''_i$ for all $i \in \{1,\ldots,k\}$. Indeed, we can take  %$rs \in {k \choose 2}$, let 
\end{proof}

%\subsection{Rectifications}

\begin{lemma}\label{lemmapsikm}
%	For any right $\mathcal{K}_m$-module $R$, t
	The category $\Psi(D^0(\mathcal{K}_m))$ is equivalent to the category of $\mathcal{K}_m$-bimodules $X$ equipped with a map $S^{m-1}(\mathcal{K}_m) \to X$ of $\mathcal{K}_m$-bimodules.%$S^{m-1}(\mathcal{K}_m) / \mathrm{Bimod}_{\mathcal{K}_m}$.%of $\mathcal{K}_m$-bimodules $X$ equipped with a map $S^{m-1}(\mathcal{K}_m) \to X$ of $\mathcal{K}_m$-bimodules.
\end{lemma}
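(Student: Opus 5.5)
I would prove this by unpacking $\Psi(c)$ for $c = D^0(\mathcal{K}_m)$ exactly as in the proof of Lemma \ref{lemmaphikm}, now using the polynomial $2$-monad $\mathcal{K}_m^{\rotboxtimes}$ instead of $\mathcal{K}_m^{\specialdots}$. Here $D^0(\mathcal{K}_m)$ is read as an object of $\mathbb{C}$, via the cofibrant object $(b,X)=D^0(\mathcal{K}_m)$ of Lemma \ref{lemmaquillenreflection} pushed forward along $g_!$. The key point is that in $\mathcal{K}_m^{\rotboxtimes}$ there is no positional constraint. For the target colour $t$, an operation is any $(\mu,c_1,\ldots,c_k,t)$ with arbitrary $c_i \in \mathfrak{D}^m$. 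Hence a $\mathcal{K}_m^{\rotboxtimes}$-algebra structure on $(c,Y)$ consists of three pieces of data:
\begin{enumerate}
	\item the operations with all $c_i=t$, which make $Y$ a $\mathcal{K}_m$-algebra in the category $\mathcal{M}$ of right $\mathcal{K}_m$-modules, that is, a $\mathcal{K}_m$-bimodule by \cite[Proposition 9.1.2]{fressebook};
	\item the unary operations $(\ast,c',t)$ for $c' \in \mathfrak{S}^{m-1}$, which give maps $c_{c'} \to Y$ in $\mathcal{M}$, natural along $\mathfrak{S}^{m-1}$ by the compatibility of unary operations;
	\item compatibility axioms saying that mixed operations factor as the unary maps followed by the $\mathcal{K}_m$-action on $Y$.
\end{enumerate}

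So the first step is a decomposition lemma. Every operation of $\mathcal{K}_m^{\rotboxtimes}$ with target $t$ factors uniquely as an operation with all source colours equal to $t$, precomposed with unary morphisms $c_i \to t$ of $\mathfrak{D}^m$. This uses the Boardman--Vogt description $\mathcal{K}_m \otimes_{BV} \mathfrak{D}^m$: the interchange relation lets unary $\mathfrak{D}^m$-operations be pushed to the inputs. Consequently a $\mathcal{K}_m^{\rotboxtimes}$-structure on $(c,Y)$ extending the given one on $c$ is the same as a $\mathcal{K}_m$-bimodule structure on $Y$ together with a map of $\mathcal{K}_m^{\rotbox}$-algebras $c \to \delta_Y$. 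Here $\delta_Y$ is the constant diagram at $Y$ on $\mathfrak{S}^{m-1}$, which is a $\mathcal{K}_m^{\rotbox}$-algebra because $Y$ is a $\mathcal{K}_m$-algebra.

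The second step uses the adjunction between $\mathcal{K}_m^{\rotbox}$-algebras and $\mathcal{K}_m$-algebras (bimodules) given by $\delta$ and $\colim$. Since $\mathcal{K}_m^{\rotbox} = \mathcal{K}_m \otimes_{BV} \mathfrak{S}^{m-1}$, its algebras are $\mathfrak{S}^{m-1}$-diagrams of $\mathcal{K}_m$-bimodules, and $\delta$ has left adjoint $\colim_{\mathfrak{S}^{m-1}}$ computed in bimodules. Maps $c \to \delta_Y$ therefore correspond to bimodule maps $\colim c \to Y$, so $\Psi(c) \simeq (\colim c)/\mathrm{Bimod}_{\mathcal{K}_m}$, with morphisms matching on both sides.

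The final step identifies $\colim c$ with $S^{m-1}(\mathcal{K}_m)$. The object $D^0(\mathcal{K}_m)$ restricted to $\mathfrak{S}^{m-1}$ is a cofibrant diagram weakly equivalent to the constant diagram $\delta_{\mathcal{K}_m}$, namely the diagram $\delta'$ built from the $D^l$'s. Lemma \ref{lemmaspherehocolimformula} then gives $\colim c = \hocolim \delta_{\mathcal{K}_m} = S^{m-1}(\mathcal{K}_m)$, computed in $\mathrm{Bimod}_{\mathcal{K}_m}$.

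I expect the main obstacle to be the decomposition lemma in the first step, specifically checking that the BV tensor product with a poset adds no extra relations that would distort the $\mathcal{K}_m$-action on the mixed-colour operations. A secondary difficulty is making precise that $D^0(\mathcal{K}_m)$ really is the diagram $(D^l(\mathcal{K}_m))_{(\epsilon,l)}$. I would define $D^0(\mathcal{K}_m)$ as that specific cofibrant replacement, transported to $\mathbb{C}$, so that the equality holds by construction.
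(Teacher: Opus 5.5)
Your argument is the paper's. By the Boardman--Vogt description, $\mathbb{C}$ is the category of $\mathfrak{S}^{m-1}$-diagrams of $\mathcal{K}_m$-bimodules; a $\mathcal{K}_m^{\rotboxtimes}$-structure on $(c,Y)$ amounts to a bimodule map $\colim_{\mathfrak{S}^{m-1}} c \to Y$, so $\Psi(c) \cong (\colim_{\mathfrak{S}^{m-1}} c)/\mathrm{Bimod}_{\mathcal{K}_m}$; and Lemma \ref{lemmaspherehocolimformula} identifies $\colim_{\mathfrak{S}^{m-1}} D^0(\mathcal{K}_m)$ with $S^{m-1}(\mathcal{K}_m)$. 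You simply make explicit the factorisation of target-$t$ operations through unary ones and the $\colim \dashv \delta$ adjunction, which the paper leaves implicit.

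One small point concerns your two readings of $D^0(\mathcal{K}_m)$, as $g_!(b)$ or as the diagram $\delta'$. You do not need to reconcile them by fiat: any cofibrant replacement of the constant diagram in $\mathbb{C}$ gives a Quillen-equivalent under-category, and that is the sense of ``equivalent'' in the paper too. However, knowing that $g_!(b)$ is such a replacement uses the homotopical cofinality of $g$.
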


\begin{proof}
	The category $\mathbb{C}$ of $\mathcal{K}_m^{\rotbox}$-algebras in the category of covariant presheaves over $\mathfrak{S}^{m-1}$ in the category of $\mathcal{K}_m$-algebras, by definition of the Boardman-Vogt tensor product. For $c \in \mathbb{C}$ and $Y \in M$, the structure of a $\mathcal{K}_m^{\rotboxtimes}$-algebra on the pair $(c,Y)$ corresponds to a map $\colim_{\mathfrak{S}^{m-1}} c \to Y$. So $\Psi(c)$ is equivalent to $\colim_{\mathfrak{S}^{m-1}} c / \mathrm{Bimod}_{\mathcal{K}_m}$. We have
	\[
		\colim_{\mathfrak{S}^{m-1}} D^0(\mathcal{K}_m) / \mathrm{Bimod}_{\mathcal{K}_m} \simeq \hocolim_{\mathfrak{S}^{m-1}} \mathcal{K}_m / \mathrm{Bimod}_{\mathcal{K}_m} \simeq S^{m-1}(\mathcal{K}_m) / \mathrm{Bimod}_{\mathcal{K}_m}
	\]
	where the second equivalence is given by Lemma \ref{lemmaspherehocolimformula}. %, $\hocolim_{\mathfrak{S}^{m-1}} \mathcal{K}_m \simeq S^{m-1}(\mathcal{K}_m)$, which 
	This concludes the proof.
\end{proof}

%Test $\mathclap{\scalebox{0.5}{$\circ$}}\diamonddots$
%$\mathrel{\ooalign{\hss$\diamonddots$ \hss\cr\adjustbox{lap={\width}{.175em},raise=0.13em}{\scalebox{0.5}{$\circ$}}\cr}}$
%\stackon[-3.5pt]{$\diamonddots$}{${\scalebox{0.5}{$\circ$}}$}
%$\equaldot$
%%\newcommand{\eq}{\mathrel{{=}\mkern-9.5mu{\cdot}\mkern4mu}}
%$\mathrel{{=}\mkern-9.5mu{\cdot}\mkern4mu}$
%
%We have the following triangle of adjunctions
%\[
%	\xymatrix{
%		\Phi^\circ(b) \ar@/_/[rr]_U \ar@/_/[rd] && \Phi(b) \ar@/^/[ld] \\
%		& \mathrm{Cat} \ar@/^/[ru]^G
%	}
%\]
%
%\begin{lemma}
%	There is an isomorphism $U(0) \sim G(1)$ in $\Phi(b)$.
%\end{lemma}
%
%\begin{proof}
%	$U(0)$ and $G(1)$ can both be computed as classifiers. $U(0)$ is the category of properly labelled complete graphs but without vertex $E$. $G(1)$ is the category of properly labelled complete graphs with exactly one vertex $E$. There is a forgetful functor $G(1) \to U(0)$ which deletes the vertex $E$. 
%\end{proof}
%
%\subsection{Quasi-tameness}
%
%\begin{lemma}
%	The polynomial $2$-monad $\mathcal{K}_m^\fivedots$ is quasi-tame.
%\end{lemma}
%
%\begin{proof}
%	We need to compute the classifier.
%\end{proof}
%
%Let $\mathcal{C}_m^\fivedots$
%
%
%
%\begin{theorem}
%	There is a weak equivalence
%	\[
%		\Omega \mathrm{Sop} (\mathcal{C}_m,\mathcal{C}_n) \to \mathrm{Sop}_{\mathcal{C}_m \sqcup \mathcal{C}_m} (\mathcal{C}_m,\mathcal{C}_n) \to \mathrm{Bimod}_{\mathcal{C}_m,\mathcal{C}_m} (\mathcal{C}_m,\mathcal{C}_n)
%	\]
%\end{theorem}

\subsection{Putting everything together}

Let $\mathrm{IBimod}_{\mathcal{K}_m}$ be the category of infinitesimal $\mathcal{K}_m$-bimodules.

\begin{theorem}\cite[Main Theorem 1]{ducoulombierturchin}
	Let $Y$ be a $\mathcal{K}_m$-bimodule equipped with a map of $\mathcal{K}_m$-bimodules $\mathcal{K}_m \to Y$. If $Y_0$ is contractible, then there is a weak equivalence
	\[
		\Omega^m \mathrm{Bimod}_{\mathcal{K}_m}^\mathsf{h}(\mathcal{K}_m,Y) \xrightarrow{\sim} \mathrm{IBimod}_{\mathcal{K}_m}^\mathsf{h}(\mathcal{K}_m,Y).% \to X_0.
	\]
\end{theorem}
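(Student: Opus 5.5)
The plan is to assemble the chain of equivalences established in Section \ref{sectiondelooping}, following the pattern of \cite{deleger}. The starting point is Lemma \ref{lemmageneraldelooping}, applied to the map $\mathcal{K}_m \to Y$. It gives a weak equivalence
\[
\Omega^m \mathrm{Bimod}_{\mathcal{K}_m}^\mathsf{h}(\mathcal{K}_m,Y) \xrightarrow{\sim} (S^{m-1}(\mathcal{K}_m)/\mathrm{Bimod}_{\mathcal{K}_m})^\mathsf{h}(\mathcal{K}_m,Y).
\]
By Lemma \ref{lemmapsikm}, the right hand side is the derived mapping space $\Psi(D^0(\mathcal{K}_m))^\mathsf{h}(\mathcal{K}_m,Y)$. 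Here $D^0(\mathcal{K}_m)$ is viewed in $\mathbb{B}$ via the restriction of $(b,X)=D^0(\mathcal{K}_m)\in\int\Phi$. I also need to check that this equivalence of categories is compatible with the model structures, i.e. that it is at least a Quillen equivalence. This reduces to Lemma \ref{lemmaspherehocolimformula} together with left properness (Lemma \ref{lemmaleftproper}), since left properness makes undercategories invariant under weak equivalences of the base object.

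Next, Lemma \ref{lemmaquillenreflection}, which rests on the cofinality Theorem \ref{theoremcofinality}, replaces this by $\Phi(b)^\mathsf{h}(\mathcal{K}_m,Y)$. Then Lemma \ref{lemmafibrationsequence} gives a fibration sequence
\[
\Phi(b)^\mathsf{h}(\mathcal{K}_m,Y) \to \Phi^\circ(\mathcal{K}_m)^\mathsf{h}(\mathcal{K}_m,Y) \to Y_0 .
\]
Since $Y_0$ is contractible by hypothesis, the first map is a weak equivalence. Finally, Lemma \ref{lemmaphikm} identifies $\Phi^\circ(\mathcal{K}_m)$ with $\mathrm{IBimod}_{\mathcal{K}_m}$. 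This step is compatible with model structures because both are transferred from $\mathcal{M}$ along forgetful functors that correspond under the equivalence. Hence the last space is $\mathrm{IBimod}_{\mathcal{K}_m}^\mathsf{h}(\mathcal{K}_m,Y)$. Composing the equivalences gives the statement.

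The main obstacle is bookkeeping: one must check that the objects $\mathcal{K}_m$ and $Y$ are transported correctly through each identification. In particular, $Y$ must be endowed with the structure of an object of $\Psi(b)$ and of $\Phi(b)$, by restriction along $f$. The composite map must also agree with the canonical one, so the statement holds with the natural map rather than just an abstract equivalence. I would first verify that the map $\mathcal{K}_m\to Y$ of bimodules induces compatible points in each category. For the undercategory $S^{m-1}(\mathcal{K}_m)/\mathrm{Bimod}_{\mathcal{K}_m}$, the structure map is $S^{m-1}(\mathcal{K}_m)\to D^m(\mathcal{K}_m)\simeq\mathcal{K}_m\to Y$. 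In the other categories the point comes by restriction along $f$. After that, each comparison is one of the cited lemmas.
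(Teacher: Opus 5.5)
Your proposal is correct and follows the paper's proof: it uses the same chain Lemma~\ref{lemmageneraldelooping}, Lemma~\ref{lemmapsikm}, Lemma~\ref{lemmaquillenreflection}, Lemma~\ref{lemmafibrationsequence} (where contractibility of $Y_0$ is used), and then Lemma~\ref{lemmaphikm}. The only cosmetic difference is that the paper lists the passage from $\Phi^\circ(b)$ to $\Phi^\circ(\mathcal{K}_m)$ via Lemma~\ref{lemmarectification} as a separate step, while you invoke Lemma~\ref{lemmafibrationsequence} as stated, in which that rectification is already absorbed through Lemma~\ref{lemmapointed}.
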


\begin{proof}
%	Applying Lemma \ref{lemmageneraldelooping} and Lemma \ref{lemmapsikm}, we get the delooping
%	\[
%		\Omega^m \mathrm{Bimod}_{\mathcal{K}_m}(\mathcal{K}_m,X) \to \nsfrac{S^{m-1}(\mathcal{K}_m)}{\mathrm{Bimod}_{\mathcal{K}_m}} (\mathcal{K}_m,X) = \Psi(D^0(\mathcal{K}_m)) (\mathcal{K}_m,X).
%	\]
%	The category of $\mathcal{K}_m$-bimodules is left proper according to Lemma \ref{lemmaleftproper}. 
	We have the following sequence of weak equivalences
	\begin{align*}
		\Omega^m \mathrm{Bimod}_{\mathcal{K}_m}^\mathsf{h}(\mathcal{K}_m,Y) &\xrightarrow{\sim} (S^{m-1}(\mathcal{K}_m) / \mathrm{Bimod}_{\mathcal{K}_m})^\mathsf{h} (\mathcal{K}_m,Y) && \text{Lemma \ref{lemmageneraldelooping}}\\
		&\xrightarrow{\sim} \Psi(b)^\mathsf{h} (\mathcal{K}_m,Y) && \text{Lemma \ref{lemmapsikm}}\\
		&\xrightarrow{\sim} \Phi(b)^\mathsf{h} (\mathcal{K}_m,Y) && \text{Lemma \ref{lemmaquillenreflection}}\\
		&\xrightarrow{\sim} \Phi^\circ(b)^\mathsf{h} (\mathcal{K}_m,Y) && \text{Lemma \ref{lemmafibrationsequence}}\\
		&\xrightarrow{\sim} \Phi^\circ(\mathcal{K}_m)^\mathsf{h} (\mathcal{K}_m,Y) && \text{Lemma \ref{lemmarectification}}\\
		&\xrightarrow{\sim} \mathrm{IBimod}_{\mathcal{K}_m}^\mathsf{h} (\mathcal{K}_m,Y) && \text{Lemma \ref{lemmaphikm}}
	\end{align*}
\end{proof}

By taking $Y=\mathcal{K}_n$ in the previous theeorem, we get:

\begin{corollary}
%	For $n \geq m \geq 0$, t
	There is a weak equivalence
	\[
		\Omega^m \mathrm{Bimod}_{\mathcal{K}_m}^\mathsf{h}(\mathcal{K}_m,\mathcal{K}_n) \to \mathrm{IBimod}_{\mathcal{K}_m}^\mathsf{h}(\mathcal{K}_m,\mathcal{K}_n).
	\]
\end{corollary}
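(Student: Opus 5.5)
The statement is the corollary obtained from the preceding theorem (Ducoulombier--Turchin, Main Theorem 1) by taking $Y=\mathcal{K}_n$. The plan is to verify its two hypotheses for this choice of $Y$ and then quote it. Throughout, $\mathcal{K}_n$ is the complete graph operad for $n \geq m$.

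First I would construct the map of $\mathcal{K}_m$-bimodules $\mathcal{K}_m \to \mathcal{K}_n$. There is an inclusion of categorical operads $\mathcal{K}_m \to \mathcal{K}_n$, induced on the sets of weights by $\{1,\ldots,m\} \subset \{1,\ldots,n\}$. It is compatible with the multiplication \eqref{equationmultiplicationkm}, because that formula only reads off weights blockwise. Restricting along this operad map makes $\mathcal{K}_n$ a $\mathcal{K}_m$-bimodule: the left and right actions are composites of the inclusion with the operadic composition of $\mathcal{K}_n$. The inclusion is then a bimodule map, by the associativity and unit axioms of $\mathcal{K}_n$. I would also check that this bimodule lives in the same ambient category $\mathcal{M}$ of right $\mathcal{K}_m$-modules used in Section \ref{sectiondelooping}. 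This is clear, since only the restricted right action is used.

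Second, I would verify that $Y_0=\mathcal{K}_n(0)$ is contractible. By definition, $\mathcal{K}_n(0)=\{1,\ldots,n\}^{0 \choose 2}\times\Sigma_0$ is the terminal category, so its nerve is a point.

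Applying the theorem with $Y=\mathcal{K}_n$ then gives the weak equivalence. The only point needing care, and the one I expect to be the main obstacle, is whether the arity-zero convention matches the one in Lemma \ref{lemmafibrationsequence}. There the fibre term $Y_0$ comes from the unit $I=(1,0,0,\ldots)$ of $\mathcal{M}$, so I would check that $Y_0$ is indeed the arity-zero component, the one corepresented by $I$. Once that identification is confirmed, the fibre is contractible and the theorem applies directly. If one wants the topological statement, the map should also be interpreted after applying the classifying space $B$, but the statement as written is at the level of $\mathcal{K}$'s.
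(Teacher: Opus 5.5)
Your proposal is correct and matches the paper's argument, which obtains the corollary simply by taking $Y=\mathcal{K}_n$ in the preceding theorem; your checks (the bimodule map $\mathcal{K}_m\to\mathcal{K}_n$ coming from the operad inclusion for $n\geq m$, and $\mathcal{K}_n(0)$ being the terminal category) are exactly the hypotheses the paper leaves implicit. The arity-zero point you flag does resolve as you expect, since a map of right $\mathcal{K}_m$-modules $I\to Y$ is the same thing as an object of $Y(0)$.
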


\bibliographystyle{plain}
\bibliography{homotopy-theory}

\end{document}